\declaretheoremstyle[
headfont=\color{blue}\normalfont\bfseries,
bodyfont=\color{blue}\normalfont\itshape,
]{colored}
\theoremstyle{plain}
\newtheorem{theorem}{Theorem}[section]
\newtheorem{cor}[theorem]{Corollary}
\newtheorem{corollary}[theorem]{Corollary}
\newtheorem{prop}[theorem]{Proposition}
\newtheorem{lemma}[theorem]{Lemma}
\theoremstyle{definition}
\newtheorem{example}[theorem]{Example}
\newtheorem{examples}[theorem]{Examples}
\newtheorem{rem}[theorem]{Remark}
\newtheorem{remark}[theorem]{Remark}
\newtheorem{definition}[theorem]{Definition}
\newtheorem{problem}[theorem]{Problem}
\newcommand{\R}{\mathbb{R}}
\newcommand{\N}{\mathbb{N}}
\newcommand{\C}{\mathbb{C}}
\newcommand{\KK}{\mathbb{K}}
\newcommand{\T}{\mathbb{T}}
\newcommand{\W}{\mathcal{W}}
\newcommand{\Lin}{\mathcal{L}}
\newcommand{\K}{\mathcal{K}}
\newcommand{\F}{\mathcal{F}}
\newcommand{\eps}{\varepsilon}
\newcommand{\vertiii}[1]{{\left\vert\kern-0.25ex\left\vert\kern-0.25ex\left\vert #1
 \right\vert\kern-0.25ex\right\vert\kern-0.25ex\right\vert}}
\DeclareMathOperator{\dist}{dist}
\DeclareMathOperator{\re}{Re}
\DeclareMathOperator{\spann}{span}
\DeclareMathOperator{\co}{co}
\DeclareMathOperator{\NA}{NA}
\DeclareMathOperator{\Id}{Id}
\DeclareMathOperator{\QNA}{QNA}
\DeclareMathOperator{\LipA}{LipA}
\newcommand{\Lip}{{\mathrm{Lip}}_0}
\renewcommand{\leq}{\leqslant}
\renewcommand{\geq}{\geqslant}
\title[On quasi norm attaining operators between Banach spaces]{On quasi norm attaining operators between Banach spaces}
\author[G.~Choi]{Geunsu Choi}
\address[G.~Choi]{Department of Mathematics, Institute for Industrial and Applied Mathematics, Chungbuk National University, Cheongju, Chungbuk 28644, Republic of Korea}
\email{\texttt{chlrmstn90@gmail.com}}
\author[Y.~S.~Choi]{Yun Sung Choi}
\address[Y.~S.~Choi]{Department of Mathematics, POSTECH, Pohang 790-784, Republic of Korea}
\email{\texttt{mathchoi@postech.ac.kr}}
\author[M.~Jung]{Mingu Jung}
\address[Mingu Jung]{Department of Mathematics, POSTECH, Pohang 790-784, Republic of Korea \newline
	\href{http://orcid.org/0000-0003-2240-2855}{ORCID: \texttt{0000-0003-2240-2855} }}
\email{\texttt{jmingoo@postech.ac.kr}}
\author[M.~Mart\'in]{Miguel Mart\'in}
\address[Mart\'{\i}n]{Universidad de Granada, Facultad de Ciencias.
Departamento de An\'{a}lisis Matem\'{a}tico, 18071-Granada,
Spain \newline
	\href{http://orcid.org/0000-0003-4502-798X}{ORCID: \texttt{0000-0003-4502-798X} }}
\email[Mart\'{\i}n]{mmartins@ugr.es}
\thanks{The first author was supported by the National Research Foundation of Korea (NRF) grant funded by the Korea government (MSIT) [NRF-2020R1C1C1A01012267]. The second and third authors were supported by the Basic Science Research Program through the National Research Foundation of Korea (NRF) funded by the Ministry of Education (NRF-2019R1A2C1003857). The fourth author has been partially supported by projects PGC2018-093794-B-I00 (MCIU/AEI/FEDER, UE) and FQM-185 (Junta de Andaluc\'{\i}a/FEDER, UE)}
\date{April 23nd, 2020}
\keywords{Banach space; norm-attaining operators; Radon-Nikod\'{y}m property; compact operators; remotality; reflexivity; strong Radon-Nikod\'{y}m property}
\subjclass[2010]{Primary 46B04, 46B20, 46B22; Secondary 41A65, 47B07}
\begin{document}
	
\begin{abstract}
	We introduce a weakened notion of norm attainment for bounded linear operators between Banach spaces which we call \emph{quasi norm attaining operators}. An operator $T\colon X \longrightarrow Y$ between the Banach spaces $X$ and $Y$ is quasi norm attaining if there is a sequence $(x_n)$ of norm one elements in $X$ such that $(Tx_n)$ converges to some $u\in Y$ with $\|u\|=\|T\|$. Norm attaining operators in the usual sense (i.e.\ operators for which there is a point in the unit ball where the norm of its image equals the norm of the operator) and compact operators satisfy this definition. The main result of the paper is that strong Radon-Nikod\'{y}m operators such as weakly compact operators can be approximated by quasi norm attaining operators (even by a stronger version of the definition), which does not hold for norm attaining operators. This allows us to give characterizations of the Radon-Nikod\'{y}m property in term of the denseness of quasi norm attaining operators for both domain spaces and range spaces, extending previous results by Bourgain and Huff. We also present positive and negative results on the denseness of quasi norm attaining operators, characterize both finite dimensionality and reflexivity in terms of quasi norm attaining operators, discuss conditions to obtain that quasi norm attaining operators are actually norm attaining, study the relationship with the norm attainment of the adjoint operator, and present some stability properties. We finish the paper with some open questions.
\end{abstract}

\maketitle

\section{Introduction}
Let $X$ and $Y$ be Banach spaces over the field $\KK$ (which will always be $\R$ or $\C$). We write $B_X$ and $S_X$ to denote the unit ball and the unit sphere of $X$, respectively. By $\Lin(X,Y)$ we mean the Banach space of all bounded linear operators from $X$ to $Y$ endowed with the operator norm; we just write $X^*=\Lin(X,\KK)$ for the dual space of $X$. The space of compact (respectively, weakly compact) operators from $X$ to $Y$ will be denoted by $\K(X,Y)$ (respectively, $\W(X,Y)$). We denote by $\mathbb{T}$ the unit sphere of the base field $\KK$ and use $\re(\cdot)$ to denote the real part, being just the identity when dealing with real scalars.

Recall that $T\in \Lin (X,Y)$ \emph{attains its norm} ($T\in \NA(X,Y)$ in short) if there is a point $x_0\in S_X$ such that $\|Tx_0\|=\|T\|$; in this case, we say that $T$ \emph{attains its norm at} $x_0$. Equivalently, $T\in \NA(X,Y)$ if and only if $T(B_X)\cap \|T\|S_Y \neq \emptyset$. The study of norm attaining operators goes back to the 1963's paper \cite{Lindenstrauss} by J.~Lindenstrauss, who first discussed the possible extension to the setting of general operators of the famous Bishop-Phelps' theorem on the denseness of norm attaining functionals, that is, to study when the set $\NA(X,Y)$ is dense in $\Lin(X,Y)$. He showed that this is not always the case and provided several positive conditions. We refer the reader to the expository paper \cite{Aco-survey} by M.~D.~Acosta for a detailed account on the main results on this topic. Let us just mention that important contributions to this topic have been done by, among many other authors, J.~Bourgain, R.~Huff, C.~Stegall, and V.~Zizler in the 1970's, J.~Partington and W.~Schachermayer in the 1980's, R.~Aron, M.~D.~Acosta, W.~T.~Gowers, and R.~Pay\'{a} in the 1990's, and that it is still an active area of research by many authors, especially in the related topic of the study of the Bishop-Phelps-Bollob\'{a}s property introduced in 2008 \cite{BPBp}. We refer to \cite{Acosta-survey-BPBP,M-RACSAM} for an account of the most recent results.

Our aim in this paper is to study the following concept, which is a weakening of norm attainment.

\begin{definition}\label{def:QNA}
We say that a bounded linear operator $T \in \Lin (X,Y)$ \emph{quasi attains its norm} (in short, $T \in \QNA(X,Y)$) if $\overline{T(B_X)} \cap \|T\| S_Y \neq \emptyset$. Equivalently, $T \in \QNA(X,Y)$ if and only if there exists a sequence $(x_n) \subseteq S_X$ such that $(Tx_n)$ converges to some vector $u \in Y$ with $\|u\|=\|T\|$; in this case, we say that $T$ \emph{quasi attains its norm toward $u$}.
\end{definition}

The main goal here will be to study when the set $\QNA(X,Y)$ is dense in the space $\Lin(X,Y)$, providing both negative and positive results, and applications.

The concept presented in Definition~\ref{def:QNA} appeared previously in a paper by G.~Godefroy \cite{G} of 2015 in the more general setting of Lipschitz maps, as follows. We write $\Lip(X,Y)$ for the real Banach space of all Lipschitz maps from a Banach space $X$ to a Banach space $Y$ vanishing at $0$, endowed with the Lipschitz number $\|\cdot\|_\text{Lip}$ as a norm.

\begin{definition}\label{def:LipA}
Let $X$ and $Y$ be real Banach spaces. A Lipschitz map $f \in \Lip (X,Y)$ \emph{attains its norm toward} $u \in Y$ (in short, $f \in \LipA(X,Y)$) if there exists a sequence of pairs $\bigl((x_n,y_n)\bigr) \subseteq \widetilde{X}$ such that
	\[
	\frac{f(x_n)-f(y_n)}{\|x_n-y_n\|} \longrightarrow u \quad \text{with } \|u\|=\|f\|_\text{Lip},
	\]
where $\widetilde{X}=\{(x,y) \in X^2 \colon x \neq y \}$.
\end{definition}

Observe that it is clear from the definitions that
\begin{equation}\label{eq:qna=lincaplipa}
\QNA(X,Y) = \Lin (X,Y) \cap \LipA(X,Y)
\end{equation}
for all Banach spaces $X$ and $Y$. It is shown in \cite{G} that no Lipschitz isomorphism from $c_0$ to any renorming $Z$ of $c_0$ with the Kadec-Klee property belongs to $\LipA(c_0,Z)$. This complements an old result by J.~Lindenstrauss \cite{Lindenstrauss} concerning linear operators. In the words of Godefroy, the example shows that even the greater flexibility allowed by non linearity (and the weakening of the new definition of norm attainment) does not always provide norm attaining objects. From this example, we deduce that $\LipA(c_0,Z)$ is not dense in $\Lip(c_0,Z)$ (see \cite[Example~3.6]{CCM} for the details). Besides, from \eqref{eq:qna=lincaplipa}, no linear isomorphism from $c_0$ onto $Z$ belongs to $\QNA(c_0,Z)$, and being the set of linear isomorphisms open in $\Lin(c_0,Z)$, this gives:

\begin{example}\label{example1:QNA_not_dense_G}
{\slshape If $Z$ is a renorming of $c_0$ with the Kadec-Klee property, then $\QNA(c_0,Z)$ is not dense in $\Lin(c_0,Z)$.}
\end{example}

Observe that this result shows that denseness of quasi norm attainment is not a trivial property. On the other hand, going to positive results, the following easy remarks will provide the first ones.

\begin{remark}\label{basic_remark}
{\slshape Let $X$ and $Y$ be Banach spaces. Then, we have:}
\begin{enumerate}
\item[\textup{(a)}] $\NA (X,Y) \subseteq \QNA(X,Y).$
\item[\textup{(b)}] $\K (X,Y) \subseteq \QNA (X,Y).$
\end{enumerate}
\end{remark}

\begin{proof}
(a) is obvious; if $T$ attains its norm at $x_0\in S_X$, then $T$ quasi attains its norm toward $T(x_0)$. To show the validity of (b), consider $T \in \K(X,Y)$ and take $(x_n) \subseteq B_X$ such that $\|Tx_n\| \longrightarrow \|T\|$. Since $T(B_X)$ is relatively compact, passing to a subsequence, we may assume that $Tx_n \longrightarrow y_0$ for some $y_0 \in \overline{T(B_X)}$. Clearly, $\|y_0\|=\|T\|$, hence $T \in \QNA(X,Y)$.
\end{proof}

Now, if $\NA(X,Y)$ is dense in $\Lin(X,Y)$ for a pair of Banach spaces, then even the more $\QNA(X,Y)$ is dense in $\Lin(X,Y)$. This happens, for instance, when $X$ is reflexive (Lindenstrauss) or, more generally, when $X$ has the Radon-Nikod\'{y}m property, RNP in short (Bourgain), or when $Y$ has a geometric property called property $\beta$ (Lindenstrauss) which is accessible by equivalent renorming to every Banach space (Schachermayer). This is also the case for some concrete pairs of Banach spaces as $(L_1(\mu),L_1(\nu))$, $(L_1(\mu),L_\infty(\nu))$, $(C(K_1),C(K_2))$, $(C(K),L_1(\mu))$, among many others. A detailed account of all these examples can be found in Examples~\ref{examples:NA-dense}. Similarly, if all operators from a Banach space $X$ to a Banach space $Y$ are compact, then $\QNA(X,Y)=\Lin(X,Y)$ by Remark~\ref{basic_remark}.(b), so this produce a long list of examples of pairs by using Pitt's Theorem or results by Rosenthal. An extensive list of examples and of references where one may found much more examples of this kind appears in Examples~\ref{examples:K(X,Y)=L(X,Y)}. This allows to present an example of pair of Banach spaces $(X,Y)$ such that $\QNA(X,Y)$ is dense in $\Lin(X,Y)$, while $\NA(X,Y)$ is not (see Example~\ref{example:QNA-butNA-not-first}).

About negative results, we will show also in Section~\ref{sect:first_examples} that some known examples of pairs of Banach spaces $(X,Y)$ for which $\NA(X,Y)$ is not dense in $\Lin(X,Y)$ actually satisfy the stronger result that $\QNA(X,Y)$ is not dense in $\Lin(X,Y)$. This is the case of $(c_0,Y)$ when $Y$ is any strictly convex renorming of $c_0$ (see Example \ref{example2:QNA_not_dense_G}), complementing Example~\ref{example1:QNA_not_dense_G}, or $(L_1[0,1],C(S))$ for a Hausdorff compact space $S$ constructed in \cite{JJ}. The idea for all these negative results is that quasi norm attainment and norm attainment are equivalent for monomorphisms (Lemma~\ref{lem_paya}), which is a result suggested to us by R.~Pay\'{a}. This idea also allows us to deduce from an old result of R.~Huff \cite{H} that if a Banach space $X$ does not have the RNP, then there are equivalent renorming $X_1$ and $X_2$ of $X$ such that $\QNA(X_1,X_2)$ is not dense in $\Lin(X_1,X_2)$ (see Proposition~\ref{prop:huff}). Therefore, a Banach space $X$ has the RNP if (and only if) $\QNA(X',Y)$ is dense in $\Lin(X',Y)$ for every equivalent renorming $X'$ of $X$ and every Banach space $Y$, giving a stronger result than the analogous one for norm attaining operators. If we focus on range spaces, it also follows from Proposition~\ref{prop:huff} that a Banach space $Y$ has the RNP provided that for every Banach space $X$ and every renorming $Y'$ of $Y$, the set $\QNA(X,Y')$ is dense in $\Lin(X,Y')$. Aiming at a reciprocal result to the previous one, we cannot lay hold on a result for norm attaining operators in the classical sense: a celebrated result of T.~Gowers \cite[Appendix]{G2} shows that there is a Banach space $X$ satisfying that $\NA(X,\ell_p)$ is not dense in $\Lin(X,\ell_p)$ for $1 < p < \infty$. Here is where the differences between the denseness of norm attaining operators and the denseness of quasi norm attaining operators are more clear. We devote Section~\ref{section:RNP-sufficient} to present the main result of the paper (Theorem~\ref{theo:RNP-QNA}): strong RNP operators can be approximated by operators which quasi attain their norm in a strong sense (uniquely quasi norm attaining operators, see Definition~\ref{definition:strong-sense}). As a consequence, we obtain that $\QNA(X,Y)$ is dense in $\Lin(X,Y)$ if $Y$ has the RNP, obtaining thus a characterization of the RNP in terms of the range space. So, with quasi norm attaining operators we obtain characterizations of the RNP which are symmetric on the domain and range spaces (see Corollary~\ref{corollary:RNP-equivalence}). This also allows us to present examples of pairs $(X,Y)$ such that $\QNA(X,Y)$ is dense, $\NA(X,Y)$ is not, and $\K(X,Y)\neq \Lin(X,Y)$ (Examples~\ref{examples:QNAdense-NAnot}). The particular cases of Theorem~\ref{theo:RNP-QNA} for compact or weakly compact operators are actually interesting (Corollary~\ref{coro:XRNPYRNP-QNAdense-strongversion}).

The already quoted Theorem~\ref{theo:RNP-QNA} has also consequences of different type: other kind of applications can be given for Lipschitz maps, multilinear maps, and $n$-homogeneous polynomials. For instance, $\LipA(X,Y)$ is dense in $\Lip(X,Y)$ for every Banach space $X$ and every Banach space $Y$ with the RNP as stated in Corollary~\ref{corollary:RNP-LDA-dense}. On the other hand, there is a natural definition of quasi norm attaining multilinear maps (see Definition \ref{definition:QNA-multilinear}) or quasi norm attaining $n$-homogeneous polynomials (Definition \ref{definition:QNA-polynomial}) for which the RNP of the range space is a sufficient condition, see Corollaries \ref{corollary:multilinearMaps} and \ref{corollary:polynomials}, respectively.

We study in Section~\ref{section:reflexivity-finite-dimension} when each of the inclusions in the chain $\NA(X,Y)\subset \QNA(X,Y)\subset \Lin(X,Y)$ can be an equality. In the first case, we show that the equality $\NA(X,Y)= \QNA(X,Y)$ characterizes the reflexivity of $X$ if its holds for a nontrivial space $Y$ (and then holds for all Banach spaces $Y$), see Proposition~\ref{prop:reflexive}. On the other hand, we show that if either $X$ or $Y$ is infinite dimensional, then $\QNA(X,Y)\neq \Lin(X,Y)$ (Corollary~\ref{corollary:finite-dimension}), and we show that this provides a result related to remotality (Corollary~\ref{coro:remotality}).

In Section~\ref{sect:further}, we first study the relationship between the quasi norm attainment and the norm attainment of the adjoint operator. If $T\in \QNA(X,Y)$, then $T^*\in \NA(Y^*,X^*)$ (Proposition~\ref{prop:QNAimpliesNAadjoint}), but the reciprocal result is not true (a concrete example is given in Example~\ref{eample:adjoint-NA-operator-not-QNA}). However, the equivalence holds for weakly compact operators (Proposition~\ref{prop_W_QNA}). Secondly, we study
possible extensions of Lemma~\ref{lem_paya} on conditions assuring that quasi norm attainment implies (classical) norm attainment. We show that quasi norm attaining operator with closed range and proximinal kernel are actually norm attaining (Proposition~\ref{prop_proximinal}), and the same is true if the annihilator of the kernel of the operator is contained in the set of norm attaining functionals on the space (Proposition~\ref{prop:QNA=NA_lineability_NA}). We will see that some of those conditions cannot be removed from the assumption by presenting specific examples; for instance, Example~\ref{example:Gowers-injective-QNA-no-NA} reveals that there exists a quasi norm attaining injective weakly compact operator which does not even belong to the closure of the set of norm attaining operators.

We discuss in Section~\ref{section:stability} some stability properties for the denseness of quasi norm attaining operators, which can be obtained analogously from the ones on norm attaining operators. Nevertheless, we have been not able to transfer to this new setting all the known stability results for norm attaining operators in the classical setting.

Finally, we devote Section~\ref{section:Remarks-open-questions} to present some remarks and open questions on the subject.

\section{Denseness of the set of quasi norm attaining operators: first positive and negative examples}\label{sect:first_examples}

As the first part of this section, we would like to present examples of pairs of Banach spaces $X$ and $Y$ such that $\QNA(X,Y)$ is dense in $\Lin(X,Y)$ using Remark~\ref{basic_remark}: those such that $\NA(X,Y)$ is dense in $\Lin(X,Y)$ and those satisfying that $\K(X,Y)=\Lin(X,Y)$.

With respect to the first condition, the following list of examples summarizes many of the known results of this kind. Instead of giving a long list of definitions and references, we send the interested reader to the survey paper \cite{Aco-survey} of 2006 for this. For results which do not appear there (as they are newer), we include the reference.

\begin{examples}\label{examples:NA-dense}\slshape
Any of the following conditions assures that $\NA(X,Y)$ is dense in $\Lin(X,Y)$ and so, that $\QNA(X,Y)$ is dense in $\Lin(X,Y)$:
	\begin{enumerate}
	\item[\textup{(a)}] \textup{(}Bourgain\textup{)} $X$ has the Radon-Nikod\'{y}m property; in particular, if $X$ is reflexive.
	\item[\textup{(b)}] \textup{(}Schachermayer\textup{)} $X$ has property $\alpha$ \textup{(}for instance, if $X=\ell_1$\textup{)}.
	\item[\textup{(c)}] \textup{(}Choi-Song\textup{)} $X$ has property quasi-$\alpha$.
	\item[\textup{(d)}] \textup{(}Lindenstrauss\textup{)} $Y$ has property $\beta$ \textup{(}for instance, if $X$ is a closed subspace of $\ell_\infty$ containing the canonical copy of $c_0$\textup{)}.
	\item[\textup{(e)}] \textup{(}Acosta-Aguirre-Pay\'{a}\textup{)} $Y$ has property quasi-$\beta$.
	\item[\textup{(f)}] \textup{(}Iwanik\textup{)} $X=L_1(\mu)$ and $Y=L_1(\nu)$ where $\mu$ and $\nu$ are finite positive Borel measures.
	\item[\textup{(g)}] \textup{(}Finet-Pay\'{a}, Pay\'a-Saleh\textup{)} $X=L_1(\mu)$ and $Y=L_\infty(\nu)$ where $\mu$ is any measure and $\nu$ is a localizable measure.
	\item[\textup{(h)}] \textup{(}Johnson-Wolfe\textup{)} $X$ is Asplund and $Y=C(K)$ for a compact Hausdorff space $K$.
	\item[\textup{(i)}] \textup{(}Johnson-Wolfe\textup{)} In the real case, $X=C(K_1)$ and $Y=C(K_2)$ for compact Hausdorff spaces $K_1$ and $K_2$.
	\item[\textup{(j)}] \textup{(}Schachermayer\textup{)} $X=C(K)$ for a compact Hausdorff space $K$, and $Y$ does not contain $c_0$ isomorphically \textup{(}in particular, if $Y=L_p(\mu)$ with $1\leq p < \infty$ and $\mu$ is a positive measure\textup{)}.
	\item[\textup{(k)}] \textup{(}Uhl\textup{)} $X=L_1(\mu)$ for a $\sigma$-finite measure $\mu$, and $Y$ has the RNP.
	\item[\textup{(l)}] \textup{(}Cascales-Guirao-Kadets\textup{)} $X$ is Asplund and $Y$ is a uniform algebra \cite[Theorem 3.6]{CGK}.
	\item[\textup{(m)}] \textup{(}Acosta\textup{)} In the complex case, $X=C_0 (L)$ for a locally compact Hausdorff space $L$ and $Y$ is $\mathbb{C}$-uniformly convex \cite[Theorem 2.4]{A-C(K)}.
	\end{enumerate}
\end{examples}

Similarly, if all operators from a Banach space $X$ to a Banach space $Y$ are compact, then $\QNA(X,Y)=\Lin(X,Y)$ by Remark~\ref{basic_remark}, so this produce many examples, some of which we include in the following statement. For more examples of this kind, we refer the readers to \cite{A,AO,AO2,O} where one may find more related examples in the setting of Orlicz sequence spaces and Lorentz sequence spaces.

\begin{examples}\label{examples:K(X,Y)=L(X,Y)}\slshape
Any of the following conditions assures that $\K(X,Y)=\Lin(X,Y)$; hence $\QNA(X,Y)=\Lin(X,Y)$:
\begin{enumerate}
	\item[\textup{(a)}] \textup{(}Pitt\textup{)} $X$ is a closed subspace of $\ell_p$ and $Y$ is a closed subspace of $\ell_r$ with $1\leq r<p<\infty$.
	\item[\textup{(b)}] \textup{(}Rosenthal\textup{)} $X$ is a closed subspace of $c_0$ and $Y$ is a Banach space which does not contain $c_0$.
 	\item[\textup{(c)}] \textup{(}Rosenthal\textup{)} $X$ is a closed subspace of $L_p(\mu)$, $Y$ is a closed subspace of $L_r(\nu)$, $1\leq r<p<\infty$ and
   \begin{itemize}
   \item[\textup{(c1)}] $\mu$ and $\nu$ are atomic \textup{(}also covered by \textup{(}a\textup{)}\textup{)},
   \item[\textup{(c2)}] or $1\leq r<2$ and $\nu$ is atomic,
   \item[\textup{(c3)}] or $p>2$ and $\mu$ is atomic.
    \end{itemize}
 	\end{enumerate}
\end{examples}

Examples \ref{examples:K(X,Y)=L(X,Y)}.(a) can be found in \cite[Theorem~2.1.4]{Albiac-Kalton}, for instance; (b) and (c) can be found in a 1969 paper by H.~Rosenthal \cite{Rosenthal-JFA1969} as, respectively, Remark~4 and Theorem~A2.

With these examples in hands, we may present easily an example of a pair of Banach spaces $(X,Y)$ such that $\QNA(X,Y)$ is dense in $\Lin(X,Y)$ while $\NA(X,Y)$ is not.

\begin{example}\label{example:QNA-butNA-not-first}
{\slshape For $1<p<\infty$, $p\neq 2$, there exist closed subspaces $X$ of $c_0$ and $Y$ of $\ell_p$ such that $\NA(X,Y)$ is not dense in $\Lin(X,Y)$, while $\Lin(X,Y)=\K(X,Y)$, so $\QNA(X,Y)=\Lin(X,Y)$.\ }
\end{example}

We will use the following lemma from \cite{M}.

\begin{lemma}[\mbox{\cite[Proposition~4]{Lindenstrauss}, \cite[Lemma~2]{M}}]\label{lemma:JFA2014}
Let $X$ be a closed subspace of $c_0$ and let $Y$ be a strictly convex Banach space. Then, every operator in $\NA(X,Y)$ has finite rank.
\end{lemma}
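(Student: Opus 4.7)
My plan is to exploit the fact that in $c_0$, the unit sphere has a very special local structure: at any norming vector $x_0\in S_X$, the ``peak'' coordinates form a finite set, and perturbations supported outside this set do not change the norm. Combined with strict convexity of $Y$, this will force $T$ to kill a finite-codimensional subspace.

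Fix $T\in \NA(X,Y)$ with $\|T\|=1$ and pick $x_0\in S_X$ with $\|Tx_0\|=1$. Since $x_0\in c_0$ and $\|x_0\|_\infty=1$, the set $F=\{n\in\N\colon |x_0(n)|=1\}$ is finite and nonempty, and $M:=\sup\{|x_0(n)|\colon n\notin F\}<1$ (again because $x_0\in c_0$). The first step is the elementary observation that if $z\in X$ satisfies $z(n)=0$ for every $n\in F$, then for every $t\in\R$ with $|t|<(1-M)/\|z\|$ one has $\|x_0+tz\|_\infty=1$: on $F$ the coordinates of $x_0+tz$ agree with those of $x_0$, while off $F$ one has $|x_0(n)+tz(n)|\leq M+|t|\,\|z\|\leq 1$.

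From here the argument is a standard ``peaking'' trick combined with strict convexity. For such $z$ and such $t$ we get $\|Tx_0+tTz\|\leq \|T\|\,\|x_0+tz\|=1=\|Tx_0\|$, so the convex function $t\mapsto \|Tx_0+tTz\|$ attains its minimum at $t=0$ with value equal to its maximum on a symmetric interval around $0$. By convexity it is then constantly equal to $1$ on that interval, producing a nontrivial line segment on $S_Y$ through $Tx_0$ unless $Tz=0$. Since $Y$ is strictly convex, $S_Y$ contains no nontrivial segment, hence $Tz=0$.

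The final step is to repackage this: the evaluation map $\Phi\colon X\to \KK^F$ defined by $\Phi(x)=(x(n))_{n\in F}$ is linear and has been shown to satisfy $\ker\Phi\subseteq \ker T$. Therefore $T$ factors through the finite-dimensional space $\Phi(X)\subseteq \KK^F$, so $T$ has finite rank (at most $|F|$). I do not expect any serious obstacle here; the only delicate point is the verification that perturbations supported off $F$ preserve the norm, which uses in an essential way that $x_0$ belongs to $c_0$ (an analogous statement fails for $\ell_\infty$), and the equality case of the convexity argument, where strict convexity of $Y$ is used crucially.
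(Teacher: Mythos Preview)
Your proof is correct and follows essentially the same approach as the paper's: both arguments locate a finite set of coordinates outside of which small perturbations of $x_0$ stay in $B_X$, then use strict convexity of $Y$ to conclude that $T$ vanishes on the corresponding finite-codimensional subspace (the paper uses an initial segment $\{1,\ldots,N\}$ with $|x_0(n)|\leq \tfrac12$ for $n>N$, while you use the exact peak set $F$). One small slip in your write-up: the convex function $t\mapsto\|Tx_0+tTz\|$ attains its \emph{maximum}, not minimum, at the interior point $t=0$, and it is this (a convex function with an interior maximum is constant) that forces $Tz=0$.
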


Let us comment that for $X=c_0$ this was shown in \cite[Proposition~4]{Lindenstrauss} and the general case appeared in \cite[Lemma~2]{M}, but we would like to include here the easy argument for completeness. Indeed, fix $T\in \NA(X,Y)$ such that $\|T\|=\|Tx_0\|$ for $x_0\in S_X$ and use that $x_0\in X\leq c_0$ to get $N\in \N$ satisfying that $x_0(n)\leq \frac{1}{2}$ for every $n\geq N$; then, if $z\in X$ belongs to $$Z=\bigcap\nolimits_{1\leq k\leq N}\{x\in X\colon x(k)=0\},$$ and $\|z\|\leq \frac{1}{2}$, we have $\|x_0 \pm z\|\leq 1$, so $\|T(x_0)\pm T(z)\|\leq 1$; as $Y$ is strictly convex, it follows that $T(z)=0$ for every $z\in Z$, so $T$ is of finite-rank.

\begin{proof}[Proof of Example~\ref{example:QNA-butNA-not-first}]
Take $Y$ failing the approximation property (use \cite[Theorem~2.d.6]{Linden-Tz}, \cite[Theorem~1.g.4]{LTII}, for instance) and use the classical result by Grothendieck (see \cite[Theorem~18.3.2]{Jarchow} for instance) to get a closed subspace $X$ of $c_0$ such that $\K(X,Y)$ is not contained in the closure of the set of finite rank operators. Then, Lemma~\ref{lemma:JFA2014} gives that $\NA(X,Y)$ is not dense. Finally, that $\Lin(X,Y)=\K(X,Y)$ follows from Examples~\ref{examples:K(X,Y)=L(X,Y)}.
\end{proof}

To give examples of pairs $(X,Y)$ such that $\QNA(X,Y)$ is dense while not every operator from $X$ to $Y$ is compact is much easier: just consider the pairs $(c_0,c_0)$, $(\ell_1,\ell_1)$, and many others given by Examples~\ref{examples:NA-dense}. Nevertheless, to give an example of pair $(X,Y)$ such that $\QNA(X,Y)$ is dense in $\Lin(X,Y)$, $\NA(X,Y)$ is not dense, and $\Lin(X,Y)$ does not coincide with $\K(X,Y)$ is more involved. We will produce many examples of this kind in Section~\ref{section:RNP-sufficient}, see Examples~\ref{examples:QNAdense-NAnot}.

Let us now present some negative results on the denseness of quasi norm attaining operators. To do so we will need the following result which has been suggested to us by R.~Pay\'{a}. Recall that a \emph{monomorphism} between two Banach spaces $X$ and $Y$ is an operator $T\in \Lin(X,Y)$ which is an isomorphism from $X$ onto $T(X)$. It is well-known that $T\in \Lin(X,Y)$ is a monomorphism if and only if there is $C>0$ such that $\|Tx\|\geq C\|x\|$ for all $x\in X$, and if and only if $\ker T=\{0\}$ and $T(X)$ is closed (see \cite[\S~10.2.3]{Kadets_book}, for instance).

\begin{lemma}\label{lem_paya}
	Let $X$ and $Y$ be Banach spaces. If $T \in \QNA (X,Y)$ is a monomorphism, then $T \in \NA(X,Y)$.
\end{lemma}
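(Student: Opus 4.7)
The plan is to exploit the monomorphism condition to turn the convergence of $(Tx_n)$ into convergence of $(x_n)$ itself, and then produce a point at which the norm is attained by taking the limit.

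Concretely, since $T \in \QNA(X,Y)$, Definition~\ref{def:QNA} gives a sequence $(x_n) \subseteq S_X$ such that $(Tx_n)$ converges to some $u \in Y$ with $\|u\| = \|T\|$. In particular, $(Tx_n)$ is Cauchy in $Y$. Because $T$ is a monomorphism, there exists $C > 0$ with $\|Tx\| \geq C\|x\|$ for every $x \in X$. Applying this to $x = x_n - x_m$, I get
\[
\|x_n - x_m\| \leq \frac{1}{C}\,\|Tx_n - Tx_m\|,
\]
so $(x_n)$ is a Cauchy sequence in $X$ and converges to some $x_0 \in X$.

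Taking norms, $\|x_0\| = \lim_n \|x_n\| = 1$, so $x_0 \in S_X$. By continuity of $T$, $Tx_0 = \lim_n Tx_n = u$, hence $\|Tx_0\| = \|u\| = \|T\|$, which shows $T \in \NA(X,Y)$ and attains its norm at $x_0$.

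There is no real obstacle here; the only point worth flagging is that the monomorphism hypothesis is used in precisely the form of a lower bound $\|T(\cdot)\| \geq C\|\cdot\|$, which is the standard reformulation recalled just before the lemma (equivalently, $\ker T = \{0\}$ together with $T(X)$ closed via the open mapping theorem applied to $T\colon X \to T(X)$). Everything else is automatic from the definition of quasi norm attainment and the completeness of $X$.
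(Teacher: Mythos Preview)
Your proof is correct and essentially identical to the paper's: both use the monomorphism hypothesis to pass from convergence of $(Tx_n)$ to convergence of $(x_n)$ and then conclude. The only cosmetic difference is that the paper phrases this via the bounded inverse $T^{-1}\colon T(X)\to X$ (using that $T(X)$ is closed so $u\in T(X)$ and setting $x_0=T^{-1}u$), whereas you use the equivalent lower bound $\|Tx\|\geq C\|x\|$ to show $(x_n)$ is Cauchy; these are the same argument in slightly different packaging.
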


\begin{proof}
Consider $T^{-1}\colon T(X)\longrightarrow X$. Let $(x_n) \subseteq S_X$ and $u \in \|T \| S_Y$ be such that $T x_n \longrightarrow u$. As $T(X)$ is closed, we have that $u\in T(X)$ so we may consider $x_0=T^{-1}u\in X$. Observe that $$x_n = T^{-1} (Tx_n) \longrightarrow T^{-1} u=x_0,$$ hence $\|x_0\|=1$. Besides, $Tx_0=T(T^{-1}u)=u$, so $\|Tx_0\|=\|u\|=\|T\|$ and $T\in \NA(X,Y)$, as desired.
\end{proof}

We will extend Lemma \ref{lem_paya} in Section~\ref{sect:further}, where we will also show that the hypothesis of monomorphism cannot be relaxed to the injectivity of the operator (see Example~\ref{example:Gowers-injective-QNA-no-NA}).

As an easy consequence of Lemma~\ref{lem_paya} we get the following result which will be the key to derive all of our negative results.

\begin{lemma}\label{lemma:paya-denseness}
Let $X$ and $Y$ be Banach spaces. If $T \in \Lin(X,Y)$ is a monomorphism such that $T \notin \overline{\NA (X,Y)}$, then $T \notin \overline{\QNA(X,Y)}$.
\end{lemma}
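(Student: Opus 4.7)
The plan is to argue by contrapositive: I will assume $T \in \overline{\QNA(X,Y)}$ and show that this forces $T \in \overline{\NA(X,Y)}$. The whole point of Lemma~\ref{lem_paya} is that, once we know we are dealing with a monomorphism, quasi norm attainment upgrades to norm attainment. So if I can show that nearby approximants of $T$ are still monomorphisms, I can apply Lemma~\ref{lem_paya} pointwise to the approximating sequence.

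The key remark is that the set of monomorphisms is \emph{open} in $\Lin(X,Y)$. Concretely, since $T$ is a monomorphism, there exists $C>0$ such that $\|Tx\|\geq C\|x\|$ for every $x\in X$. Then for any $S\in\Lin(X,Y)$ with $\|S-T\|<C/2$, the reverse triangle inequality gives
\[
\|Sx\|\geq \|Tx\|-\|S-T\|\,\|x\|\geq \tfrac{C}{2}\|x\| \qquad (x\in X),
\]
so $S$ is again a monomorphism (its kernel is trivial and its range is closed).

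Now suppose, toward a contradiction with the conclusion, that $T\in\overline{\QNA(X,Y)}$. Pick a sequence $(T_n)\subseteq \QNA(X,Y)$ with $T_n\longrightarrow T$ in the operator norm. By the previous paragraph, there is $n_0$ such that $T_n$ is a monomorphism for every $n\geq n_0$. For each such $n$, Lemma~\ref{lem_paya} applied to $T_n\in\QNA(X,Y)$ yields $T_n\in\NA(X,Y)$. Hence $T$ is a limit of operators in $\NA(X,Y)$, that is, $T\in\overline{\NA(X,Y)}$, contradicting the hypothesis.

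There is no real obstacle here; the only thing one needs to justify is the openness of the set of monomorphisms, which is the short perturbation estimate above. Everything else is a direct invocation of Lemma~\ref{lem_paya}.
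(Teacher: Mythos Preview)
Your proof is correct and follows essentially the same route as the paper: both argue by contradiction/contrapositive, use the openness of the set of monomorphisms to ensure the approximating sequence eventually consists of monomorphisms, and then apply Lemma~\ref{lem_paya} to conclude. The only difference is cosmetic: you supply the explicit perturbation estimate for openness, while the paper cites a reference.
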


\begin{proof}
	Assume to the contrary that $T \in \overline{\QNA(X,Y)}$. Then we may choose a sequence $(T_n) \subseteq \QNA(X,Y)$ of monomorphisms which converges to $T$ since the set of all monomorphisms from $X$ to $Y$ is open in $\Lin(X,Y)$ (see \cite[Lemma~2.4]{Abramovich-Aliprantis}, for instance). By Lemma~\ref{lem_paya}, each $T_n \in \NA(X,Y)$; hence, $T \in \overline{\NA(X,Y)}$ which contradicts the given assumption on $T$.
\end{proof}

We are now able to get the negative results. As we commented in the introduction (see Example~\ref{example1:QNA_not_dense_G}), the first example can be deduced from the results in \cite{G}: $\QNA(c_0, Z)$ is not dense in $\Lin (c_0, Z)$ when $Z$ is an equivalent renorming of $c_0$ with the Kadec-Klee property. Next, we generalize the result of Lindenstrauss to the quasi norm attainment case where the statement is somewhat similar to the former one. Note that there are strictly convex equivalent renorming of $c_0$ which does not have the Kadec-Klee property (see \cite[Theorem 1 in p.~100]{DiesGeom}, for instance).

\begin{example}\label{example2:QNA_not_dense_G}
{\slshape	Let $X$ be an infinite-dimensional subspace of $c_0$ and let $Y$ be a strictly convex renorming of $c_0$. Then, $\QNA(X,Y)$ is not dense in $\Lin(X,Y)$. In particular, $\QNA(c_0,Y)$ is not dense in $\Lin(c_0,Y)$.}

Indeed, as $\NA(X,Y)$ is contained in the set of finite-rank operators by Lemma~\ref{lemma:JFA2014}, we get that the inclusion from $X$ to $Y$ (which is a monomorphism) does not belong to $\overline{\QNA(X,Y)}$ by Lemma~\ref{lemma:paya-denseness}.
\end{example}

The next example extends the result \cite[Corollary 2]{JJ2}, providing a new example such that $\QNA(X,Y)$ is not dense in $\Lin(X,Y)$.

\begin{example}\label{exam:negativeL1C(S)}
\slshape
There exists a compact Hausdorff space $S$ such that $\QNA(L_1[0,1],C(S))$ is not dense in $\Lin(L_1[0,1],C(S))$.
\end{example}

\begin{proof}
We basically follow the arguments in \cite{JJ2}. Let $S$ be given by the weak$^*$-closure in $L_\infty[0,1]$ of the set
	\[
	S_0 = \left\{ \sum_{i=1}^n \left( 1 - \frac{1}{2^i} \right) \chi_{D_i} \colon D_1, \ldots, D_n \subseteq [0,1] \text{ are disjoint}, \,\mu(D_i) < \frac{1}{2^i} \right\},
	\]
and define $T_0 \in \Lin(L_1[0,1],C(S))$ by
	\[
	\bigl[T_0 f\bigr](s) := \int_0^1 f(t) s(t) \,dt \qquad \text{for } s \in S, \, f \in L_1[0,1]
	\]
as in \cite[Corollary~2]{JJ2}. It is proved there that $T_0$ cannot be approximated by elements in $\NA(L_1[0,1],C(S))$, so the result follows from Lemma~\ref{lemma:paya-denseness} if we prove that $T_0$ is a monomorphism. Indeed, choose any $f \in L_1[0,1]$, and we may assume by taking $-f$ if necessary that
	\[
	\int_A f \,d\mu \geq \frac{1}{2} \|f\|_1
	\]
where $A = \{ w \in [0,1] \colon f(w) \geq 0 \}$. Let $(f_n) \subseteq L_1[0,1]$ be a sequence of nonzero simple functions such that
	\[
	\|f_n-f\|< \frac{1}{n} \qquad \text{for every } n \in \N.
	\]
For each $n \in \N$, we can choose $I_n \in \N$ and $D_1, \ldots, D_{I_n} \subset A$ with $\mu(D_i) < 1/{2^i}$ for $i = 1, \ldots, I_n$ such that
	\[
	\mu\left(A \setminus \bigcup\nolimits_{i=1}^{I_n} D_i \right) < \frac{1}{4}\frac{\|f\|_1}{\|f_n\|_\infty}.
	\]
Now, consider $s = \sum_{i=1}^I \left(1 - \frac{1}{2^i} \right) \chi_{D_i} \in S$. Then, we have that
\begin{align*}
\bigl| [T_0 f_n] (s) \bigr| & = \left| \int_0^1 f_n(w) s(w) \, d\mu(w) \right| \\
& = \int_{\bigcup_{i=1}^{I_n} D_i} f_n(w) s(w) \,d\mu(w) \geq \frac{1}{2} \int_{\bigcup_{i=1}^{I_n} D_i} f_n(w) \, d\mu(w) \\
& = \frac{1}{2} \left[ \int_A f_n(w) \,d\mu(w) - \int_{A \setminus \bigcup_{i=1}^{I_n} D_i} f_n(w) \,d\mu(w) \right] \\
& \geq \frac{1}{2} \left( \int_A f(w) \,d\mu(w) - \frac{1}{n} \right) - \frac{1}{2} \,\mu\left(A \setminus \bigcup\nolimits_{i=1}^{I_n} D_i\right) \, \|f_n\|_\infty \\
& \geq \frac{1}{4}\|f\|_1 - \frac{1}{2n} - \frac{1}{8}\|f\|_1 = \frac{1}{8} \|f\|_1 - \frac{1}{2n},
\end{align*}
which implies that $\|T_0 f_n\| \geq \frac{1}{8} \|f\|_1 - \frac{1}{2n}$ for each $n \in \N$. As $n$ tends to $\infty$, we obtain that $\|T_0f\| \geq \frac{1}{8} \|f\|_1$. It follows that $T_0$ is a monomorphism between $L_1[0,1]$ and $C(S)$ since $f \in L_1[0,1]$ was arbitrary.
\end{proof}

The last negative result that we want to present is related to the Radon-Nikod\'{y}m property. It was proved by R.~Huff \cite{H}, extending previous results of J.~Bourgain \cite{B2}, that if a Banach space $X$ fails to have the RNP, then there exist Banach spaces $X_1$ and $X_2$, which are both isomorphic to $X$, such that the formal identity from $X_1$ to $X_2$ cannot be approximated by elements of $\NA(X_1,X_2)$. Combining this fact with Lemma~\ref{lemma:paya-denseness}, we have just obtained the following result, stronger than Huff's one.
	
	\begin{prop}\label{prop:huff}
	If a Banach space $X$ does not have the RNP, then there exist Banach spaces $X_1$ and $X_2$ both isomorphic to $X$ such that $\QNA(X_1,X_2)$ is not dense in $\Lin(X_1,X_2)$.
	\end{prop}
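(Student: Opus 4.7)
The plan is to combine the theorem of Huff (stated just before the proposition) with Lemma~\ref{lemma:paya-denseness}, which has already done the heavy lifting. The strategy splits into three short steps.

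First, I would invoke Huff's theorem verbatim: since $X$ fails the RNP, there exist equivalent renormings $X_1$ and $X_2$ of $X$, both isomorphic to $X$, such that the formal identity $J\colon X_1 \longrightarrow X_2$ is not in $\overline{\NA(X_1,X_2)}$. No work is needed here; it is quoted directly from \cite{H}.

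Second, I would check that $J$ is a monomorphism. This is automatic: $X_1$ and $X_2$ share the same underlying vector space, and the two norms are equivalent, so $J$ is a linear bijection with bounded inverse. In particular, $\ker J = \{0\}$ and $J(X_1) = X_2$ is closed, which is exactly the characterization of a monomorphism recalled in the paragraph preceding Lemma~\ref{lem_paya}.

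Third, I would apply Lemma~\ref{lemma:paya-denseness} to $J$: it is a monomorphism with $J \notin \overline{\NA(X_1,X_2)}$, so the lemma gives $J \notin \overline{\QNA(X_1,X_2)}$. Hence $\QNA(X_1,X_2)$ is not dense in $\Lin(X_1,X_2)$, as desired.

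Since Lemma~\ref{lemma:paya-denseness} (built on Lemma~\ref{lem_paya} and the openness of the set of monomorphisms) is already in place, there is no genuine obstacle in this proof; the content of the proposition is precisely that the Bourgain--Huff example, being an isomorphism, is strong enough to witness non-denseness of the \emph{a priori} larger set $\QNA$ and not only of $\NA$.
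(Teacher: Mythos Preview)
Your proposal is correct and follows exactly the paper's approach: the paper's proof is simply the sentence ``Combining this fact [Huff's theorem] with Lemma~\ref{lemma:paya-denseness}, we have just obtained the following result,'' which is precisely your three steps. The only thing to note is that the paper does not even pause to verify that the formal identity is a monomorphism, treating it as obvious; your explicit check of this is a harmless elaboration.
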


It follows from Examples~\ref{examples:NA-dense}.(a) that the above result is actually a characterization of the RNP. We will show in the next section stronger characterizations of the RNP using quasi norm attaining operators which are not valid for norm attaining operators (see Corollary~\ref{corollary:RNP-equivalence}).

\section{The relation with the Radon-Nikod\'{y}m property: a new positive result}\label{section:RNP-sufficient}

We begin this section with our main result. Recall that a closed convex subset $D$ of a Banach space $X$ is said to be an \emph{RNP set} if every subset of $D$ is dentable. Observe that a Banach space $X$ has the RNP if and only if every closed bounded convex subset of $X$ is an RNP set (see \cite{Bourgin}). Given Banach spaces $X$ and $Y$, $T\in \Lin(X,Y)$ is a \emph{strong Radon-Nikod\'{y}m operator} (\emph{strong RNP operator} in short) if $\overline{T(B_X)}$ is an RNP set.

\begin{theorem}\label{theo:RNP-QNA}
Let $X$ and $Y$ be Banach spaces. Let $\eps>0$ be given and let $T \in \Lin(X,Y)$ be a strong RNP operator. Then, there exists $S \in \QNA(X,Y)$ such that
\begin{enumerate}
\item[\textup{(i)}] $\|S-T\|<\eps$,
\item[\textup{(ii)}] there exists $z_0 \in \overline{S(B_X)} \cap \|S\|S_Y$ such that whenever $(x_n)\subseteq B_X$ satisfies that $\|Sx_n\| \longrightarrow \|S\|$, we may find a sequence $(\theta_n) \subseteq \mathbb{T}$ such that $S(\theta_nx_n) \longrightarrow z_0$; in particular, there is $\theta_0 \in \mathbb{T}$ and a subsequence $(x_{\sigma(n)})$ of $(x_n)$ such that $Sx_{\sigma(n)} \longrightarrow \theta_0z_0$.
\end{enumerate}
\end{theorem}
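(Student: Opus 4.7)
The plan is to pass from $T$ to the set $C:=\overline{T(B_X)}$, which is a closed, bounded, convex, balanced RNP set in $Y$ by hypothesis, and to construct $S$ as a small perturbation of $T$ driven by a variational principle on $C$. Since $\|T\|=\sup_{y\in C}\|y\|$ and any operator of the form $S=R\circ T$ satisfies $\overline{S(B_X)}=\overline{R(C)}$, condition~(ii) reduces to saying that $y\mapsto\|Ry\|$ attains its supremum on $C$ strongly modulo~$\T$, so the task is to choose $R\in\Lin(Y,Y)$ close to $\Id_Y$ achieving this.

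The main tool is a Stegall/Bourgain-type variational principle for RNP sets: since $C$ is an RNP set, for any $\eta>0$ the collection of $y^*\in Y^*$ with $\|y^*\|<\eta$ for which the convex continuous function $\psi_{y^*}(y):=\|y\|+\re y^*(y)$ attains its supremum on $C$ strongly at a point $z_0\in C$ (unique modulo the $\T$-action, since $C$ is balanced) is dense in $Y^*$; here \emph{strongly} means that every sequence $(y_n)\subset C$ with $\psi_{y^*}(y_n)\to\sup_C \psi_{y^*}$ admits $(\theta_n)\subset\T$ with $\theta_n y_n\to z_0$. Given such a $y^*$ (after pre-multiplying by a phase so that $y^*(z_0)>0$; note that $\re y^*(z_0)>0$ necessarily, since $-z_0\in C$ has strictly smaller $\psi_{y^*}$-value), set $v:=z_0/\|z_0\|$ and define
\[
R y\;:=\;y+\delta\,y^*(y)\,v,\qquad S\;:=\;R\circ T,
\]
for a small $\delta>0$, so that $\|S-T\|\leq\delta\|y^*\|\,\|T\|<\eps$. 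The two-sided estimate $\|y\|-\delta|y^*(y)|\leq\|Ry\|\leq\|y\|+\delta|y^*(y)|$, combined with the equality $\|R z_0\|=\|z_0\|+\delta y^*(z_0)=\psi_{y^*}(z_0)$, lets me transfer strong attainment of $\psi_{y^*}$ at $z_0$ to strong attainment of $\|R\cdot\|$ on $C$ at $z_0$ modulo~$\T$. Setting $u:=R z_0$, one checks $u\in\overline{S(B_X)}\cap\|S\|S_Y$; for any $(x_n)\subset B_X$ with $\|Sx_n\|\to\|S\|$ the images $Tx_n$ eventually lie in ever-thinner slices of $C$ around $\T\cdot z_0$, so $\theta_n Tx_n\to z_0$ for suitable $\theta_n\in\T$, whence $S(\theta_n x_n)=R(\theta_n Tx_n)\to R z_0=u$, yielding~(ii).

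The main technical obstacle is twofold: first, justifying the variational principle above in the exact form required (Stegall/Bourgain applied to the sublinear function $\|\cdot\|$ on a balanced RNP set, with $\T$-modulo uniqueness of the strong maximizer); second, turning strong attainment of $\psi_{y^*}$ into strong attainment of $\|R\cdot\|$, since the latter is only tangent to $\psi_{y^*}$ at $z_0$ rather than literally equal to it, so the clean bound has to be supplemented by a perturbative argument in a neighborhood of~$z_0$. A cleaner route, which I expect is the one actually followed in the paper, is an iterative construction: set $T_0:=T$ and at step~$n$ use the RNP of $\overline{T_n(B_X)}$ (via dentability) to extract a slice of diameter $<\eta_n\to 0$, and then choose a rank-one perturbation of norm $<\delta_n$ (with $\sum_n\delta_n<\eps$) so that near-maximizers of $\|T_{n+1}(\cdot)\|$ on $B_X$ land inside the chosen slice of $\overline{T_n(B_X)}$. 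The resulting Cauchy sequence of operators converges to $S$ with $\|S-T\|<\eps$, and the nested slices collapse to a single $\T$-orbit $\T\cdot z_0$, yielding~(ii).
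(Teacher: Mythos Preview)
Your first approach is essentially the paper's proof, and it works cleanly; your hesitation about the ``second technical obstacle'' is unwarranted, and the iterative alternative you propose is \emph{not} what the paper does.

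Here is the clarification. Drop the parameter~$\delta$: the Bourgain--Stegall principle already gives you $y^*$ with $\|y^*\|<\eps/\|T\|$ such that $\psi_{y^*}=\|\cdot\|+\re y^*$ strongly exposes $C=\overline{T(B_X)}$ at some $y_0$ (the paper's notation). Define $Ry:=y+y^*(y)\,y_0/\|y_0\|$ and $S:=R\circ T$; then $\|S-T\|\leq\|y^*\|\,\|T\|<\eps$. Now the transfer from strong attainment of $\psi_{y^*}$ to that of $\|R\cdot\|$ on $C$ is a one-line sandwich, not a perturbative argument: for every $y\in C$,
\[
\|Ry\|\;\leq\;\|y\|+|y^*(y)|\;\leq\;\|y_0\|+y^*(y_0)\;=\;\|Rz_0\|,
\]
where the middle inequality uses that $C$ is balanced (rotate $y$ so that $|y^*(y)|=\re y^*(\theta y)$ and apply the strong-exposure bound). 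Hence $\|S\|=\|y_0\|+y^*(y_0)$, and if $\|Sx_n\|\to\|S\|$ then the sandwich forces $\|Tx_n\|+|y^*(Tx_n)|\to\|y_0\|+y^*(y_0)$; choosing $\theta_n\in\T$ with $\re y^*(\theta_n Tx_n)=|y^*(Tx_n)|$ gives $\psi_{y^*}(\theta_n Tx_n)\to\sup_C\psi_{y^*}$, so $\theta_n Tx_n\to y_0$ by strong exposure, whence $S(\theta_n x_n)\to Ry_0=:z_0$. The only reason your version looked delicate is the mismatch between the perturbed functional $\|\cdot\|+\delta\,\re y^*$ implicit in your $R$ and the strongly exposing functional $\|\cdot\|+\re y^*$ delivered by the variational principle; with $\delta=1$ (and smallness coming from $\|y^*\|$) the two coincide and the argument is immediate.
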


In order to give a proof of Theroem~\ref{theo:RNP-QNA}, we need a deep result proved by C.~Stegall, usually known as the Bourgain-Stegall non-linear optimization principle. For a Banach space $Y$, a point $y_0$ of a bounded subset $D \subseteq Y$ is a \emph{strongly exposed point} if there is $y^*\in Y^*$ such that whenever a sequence $(y_n)\subseteq D$ satisfies that $\lim_n y^*(y_n)=\sup\{y^*(y)\colon y\in D\}$, $y_n$ converges to $y_0$ (in particular, $y^*(y_0)=\sup\{y^*(y)\colon y\in D\}$). In this case, we say that $y^*$ \emph{strongly exposes} $D$ at $y_0$ and that $y^*$ is a \emph{strongly exposing functional} for $D$ at $y_0$.

\begin{lemma}[\mbox{Bourgain-Stegall non-linear optimization principle, \cite[Theorem~14]{S2}}]\label{lemma:RNP-strongly-expose}
Suppose $D$ is a bounded RNP set of a Banach space $Y$ and $\phi \colon D \longrightarrow \R$ is upper semicontinuous and bounded above. Then, the set
\[
\{ y^* \in Y^* \colon \phi + \re y^* \textup{ strongly exposes } D \}
\]
is a dense $G_\delta$ subset of $Y^*$.
\end{lemma}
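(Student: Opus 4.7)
The plan is to invoke the Bourgain--Stegall non-linear optimization principle (Lemma~\ref{lemma:RNP-strongly-expose}) on the set $D:=\overline{T(B_X)}$, which is an RNP set by hypothesis, and absorb the strongly exposing functional it produces into a rank-one perturbation of $T$. The case $T=0$ being trivial, I assume $T\neq 0$.

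First, I would feed the lemma with $\phi(y):=\|y\|$, which is continuous and bounded above by $\|T\|$ on $D$. For any $\delta>0$, this yields $y^*\in Y^*$ with $\|y^*\|<\delta$ such that $\phi+\re y^*$ strongly exposes $D$ at some $z_0\in D$. Two preliminary facts drive the rest of the proof. Since $D$ is balanced (image of a balanced set under a linear map), $\theta z_0\in D$ for every $\theta\in\T$, and the uniqueness of the maximizer of $\phi+\re y^*$ forces $\alpha:=y^*(z_0)$ to be a non-negative real number. Moreover, the inequality $\|z_0\|+\alpha=\sup_D(\|\cdot\|+\re y^*)\geq \|T\|-\delta\|T\|$ shows that $\|z_0\|>0$ once $\delta$ is small enough.

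Next, I would define $u:=z_0/\|z_0\|\in S_Y$ and set $Sx:=Tx+y^*(Tx)\,u$. Then $\|S-T\|\leq \|y^*\|\,\|T\|<\delta\|T\|$, so choosing $\delta<\eps/\|T\|$ gives (i). For the norm of $S$, any $(w_n)\subseteq B_X$ with $Tw_n\to z_0$ produces $Sw_n\to z_0+\alpha u=:\tilde z$, with $\|\tilde z\|=\|z_0\|+\alpha$; conversely, for any $x\in B_X$,
$$\|Sx\|\leq \|Tx\|+|y^*(Tx)|\leq \sup_{y\in D}\bigl(\|y\|+|y^*(y)|\bigr)=\sup_{y\in D}\bigl(\|y\|+\re y^*(y)\bigr)=\|z_0\|+\alpha,$$
where the penultimate identity is the \emph{balanced-ness trick}: rotating $y\in D$ by the phase of $\overline{y^*(y)}$ converts $|y^*(y)|$ into $\re y^*(y)$ without changing $\|y\|$ or membership in $D$. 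Hence $\|S\|=\|z_0\|+\alpha=\|\tilde z\|$, and $S$ quasi attains its norm toward $\tilde z$.

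The main step is (ii). Given any $(x_n)\subseteq B_X$ with $\|Sx_n\|\to\|S\|$, the sandwich just displayed forces $\|Tx_n\|+|y^*(Tx_n)|\to\|z_0\|+\alpha$. Choosing phases $\theta_n\in\T$ so that $\theta_n y^*(Tx_n)=|y^*(Tx_n)|$, the points $\theta_n Tx_n$ lie in $D$ and satisfy $\|\theta_n Tx_n\|+\re y^*(\theta_n Tx_n)\to \sup_D(\phi+\re y^*)$, so the strong exposure property at $z_0$ yields $\theta_n Tx_n\to z_0$, whence $S(\theta_n x_n)=\theta_n Sx_n\to z_0+\alpha u=\tilde z$. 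The final ``in particular'' clause is a routine compactness extraction of a convergent subsequence of $(\theta_n)$ in $\T$. The place I expect to be the main obstacle is pinpointing the correct input to Bourgain--Stegall: feeding the trivial function $\phi=0$ would produce sequences maximizing $\re y^*(Tx_n)$, but not the norms $\|Sx_n\|$, and it is the choice $\phi=\|\cdot\|$ together with the balanced-ness trick that bridges these two convergences.
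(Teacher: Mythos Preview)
Your proposal does not address the stated lemma at all. The statement you were asked to prove is the Bourgain--Stegall non-linear optimization principle itself: given a bounded RNP set $D$ and an upper semicontinuous bounded-above function $\phi$, the set of functionals $y^*$ for which $\phi+\re y^*$ strongly exposes $D$ is a dense $G_\delta$ in $Y^*$. Instead, you have \emph{assumed} this lemma and used it to prove Theorem~\ref{theo:RNP-QNA} (approximation of strong RNP operators by uniquely quasi norm attaining ones). Your argument is essentially the paper's own proof of that theorem, with the same choice $\phi=\|\cdot\|$, the same rank-one perturbation $Sx=Tx+y^*(Tx)\,y_0/\|y_0\|$, and the same rotation trick exploiting that $D=\overline{T(B_X)}$ is balanced. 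But none of this touches the content of Lemma~\ref{lemma:RNP-strongly-expose}.

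In the paper, Lemma~\ref{lemma:RNP-strongly-expose} is not proved; it is quoted verbatim from Stegall \cite[Theorem~14]{S2}. A genuine proof would require the machinery of dentability and slices for RNP sets (or a Baire-category argument over the perturbed functions $\phi+\re y^*$), none of which appears in your write-up. If you intended to prove Theorem~\ref{theo:RNP-QNA}, your argument is correct and matches the paper's; if you intended to prove the lemma as stated, you have written nothing toward it.
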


\begin{proof}[Proof of Theorem~\ref{theo:RNP-QNA}]
Assume that $\|T\| \neq 0$. As $\overline{T(B_X)}$ is an RNP set, by Lemma~\ref{lemma:RNP-strongly-expose} applied to the function $\phi(y)=\|y\|$ for every $y\in D=\overline{T(B_X)}$, there exists $y_0^* \in Y^*$ with $\|y_0^*\|<\eps/\|T\|$ such that $\|\cdot\| + \re y_0^*$ strongly exposes $\overline{T(B_X)}$ at some $y_0 \in \overline{T(B_X)}$. Then,
\[
\|y\| + \re y_0^*(y) \leq \|y_0\| + \re y_0^*(y_0) \qquad \text{for all } y \in \overline{T(B_X)}.
\]
By rotating $y \in \overline{T(B_X)}$, we also obtain that
\begin{equation}\label{eq:str-exp}
\|y\| + |y_0^*(y)| \leq \|y_0\| + \re y_0^*(y_0) \qquad \text{for all } y \in \overline{T(B_X)},
\end{equation}
and we have that $y_0^*(y_0)=|y_0^*(y_0)|$. Besides, if $(y_n) \subseteq \overline{T(B_X)}$ satisfies that
\[
\|y_n\| + \re y_0^*(y_n) \longrightarrow \|y_0\| + \re y_0^*(y_0),
\]
then $y_n \longrightarrow y_0$. So, if $(y_n) \subseteq \overline{T(B_X)}$ satisfies that
\begin{equation}\label{eq:str-exp2}
\|y_n\| + |y_0^*(y_n)| \longrightarrow \|y_0\| + y_0^*(y_0),
\end{equation}
then we can find $(\theta_n) \subseteq \mathbb{T}$ so that $\theta_ny_n \longrightarrow y_0$.

Now, define $S \in \Lin(X,Y)$ by
\[
Sx := Tx + y_0^*(Tx) \, \frac{y_0}{\|y_0\|} \qquad \text{for every } x \in X.
\]
It is easy to see that $\|S-T\| \leq \|y_0^*\|\|T\| < \eps$ and that
\[
\|Sx\| \leq \|Tx\| + |y_0^*(Tx)| \leq \|y_0\| + y_0^*(y_0) \qquad \text{for all } x \in B_X
\]
by \eqref{eq:str-exp}. Write $z_0=\left( 1+ \frac{y_0^*(y_0)}{\|y_0\|} \right) y_0$ and observe that $\|z_0\|=\|y_0\|+y_0^*(y_0)$. Now, take a sequence $(x_n) \subseteq B_X$ such that $Tx_n \longrightarrow y_0 \in \overline{T(B_X)}$, and observe that
\[
Sx_n = Tx_n + y_0^*(Tx_n)\, \frac{y_0}{\|y_0\|} \longrightarrow \left( 1+ \frac{y_0^*(y_0)}{\|y_0\|} \right) y_0=z_0.
\]
It follows that $\|S\| = \|y_0\| + y_0^*(y_0)=\|z_0\|$ and thus, $S \in \QNA(X,Y)$. Moreover, if $(z_n) \subseteq B_X$ satisfies $\|Sz_n\| \longrightarrow \|S\|$, then we have
\begin{align*}
\|y_0\| + y_0^*(y_0) & = \lim_n \left\| Tz_n + y_0^*(Tz_n)\, \frac{y_0}{\|y_0\|} \right\| \\
& \leq \lim_n \bigl( \|Tz_n\| + |y_0^*(Tz_n)| \bigr) \\
& \leq \|y_0\| + y_0^*(y_0) \quad \text{by }\eqref{eq:str-exp}.
\end{align*}
Thus by applying the equation \eqref{eq:str-exp2}, we can find $(\theta_n) \subseteq \mathbb{T}$ such that $T(\theta_n z_n) \longrightarrow y_0$, and hence
\[
S(\theta_n z_n)=T(\theta_n z_n) + y_0^*\bigl(T(\theta_n z_n)\bigr) \,\frac{y_0}{\|y_0\|} \longrightarrow \left( 1+ \frac{y_0^*(y_0)}{\|y_0\|} \right) y_0=z_0.\qedhere
\]
\end{proof}

Some remarks on the operator constructed in the proof of Theorem~\ref{theo:RNP-QNA} are pertinent. We first need the following easy result which relates the quasi norm attainment with the norm attainment of the adjoint operator. We will provide some comments and extensions of this result in Section~\ref{sect:further}. Recall that the \emph{adjoint operator} $T^* \colon Y^* \longrightarrow X^*$ of an operator $T \in \Lin(X,Y)$ is defined by $[T^* y^*] (x) := y^* (Tx)$. A simple calculation shows that $T^* \in \Lin(Y^*,X^*)$ with $\|T^*\|=\|T\|$.

\begin{prop}\label{prop:QNAimpliesNAadjoint}
	Let $X$ and $Y$ be Banach spaces and $T\in \Lin(X,Y)$. If $T \in \QNA(X,Y)$, then $T^* \in \NA(Y^*,X^*)$. Moreover, if $T$ quasi attains its norm toward $y_0\in \|T\|S_Y$, then $T^*$ attains its norm at any $y^* \in S_{Y^*}$ such that $y^*(y_0)=\|y_0\|$.
\end{prop}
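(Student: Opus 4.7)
The plan is direct and short. Starting from the hypothesis $T\in\QNA(X,Y)$, fix a sequence $(x_n)\subseteq S_X$ and a vector $y_0\in Y$ with $\|y_0\|=\|T\|$ such that $Tx_n\longrightarrow y_0$; this is what the definition of quasi norm attainment provides. For the ``moreover" part, the statement fixes $y^*\in S_{Y^*}$ with $y^*(y_0)=\|y_0\|$, so existence is assumed; for the first conclusion, invoke Hahn--Banach to produce such a $y^*$ in the first place. Either way, the task reduces to showing that this $y^*$ is a point where $T^*$ attains its norm.

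The key computation is a one-line chain. For every $n\in\N$,
\[
\|T^*y^*\|\geq |[T^*y^*](x_n)|=|y^*(Tx_n)|,
\]
and passing to the limit using continuity of $y^*$ and $Tx_n\longrightarrow y_0$ gives
\[
\|T^*y^*\|\geq |y^*(y_0)|=\|y_0\|=\|T\|.
\]
Combined with the trivial estimate $\|T^*y^*\|\leq\|T^*\|\|y^*\|=\|T\|$ (using $\|T^*\|=\|T\|$ and $\|y^*\|=1$), we obtain $\|T^*y^*\|=\|T^*\|$, so $T^*\in\NA(Y^*,X^*)$ and it attains its norm at $y^*$.

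There is essentially no obstacle: the only ingredients are the definition of $\QNA$, the continuity of $y^*$, the identity $\|T^*\|=\|T\|$, and Hahn--Banach for the unqualified statement. The proof will be a few lines and is forced by the definitions.
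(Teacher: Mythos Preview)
Your proposal is correct and follows essentially the same argument as the paper: pick $(x_n)$ with $Tx_n\to y_0$, choose $y^*\in S_{Y^*}$ with $y^*(y_0)=\|y_0\|$, and use $\|T^*y^*\|\geq |y^*(Tx_n)|\to\|T\|$ together with $\|T^*\|=\|T\|$. The only cosmetic differences are that you make the Hahn--Banach step and the upper bound $\|T^*y^*\|\leq\|T\|$ explicit, while the paper leaves them implicit.
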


\begin{proof}
	Let $(x_n) \subseteq B_X$ be such that $T x_n \longrightarrow y_0 \in \|T\| S_Y$. Take $y^* \in S_{Y^*}$ with $|y^* (y_0) | =\|y_0\|= \| T\|$. Then
	\[
	\|T^* y^* \| \geq \bigl|[T^* y^*] (x_n) \bigr| = | y^* (T x_n) | \longrightarrow |y^* (y_0)| = \|T\|,
	\]
	which implies that $\|T^* y^* \| = \|T\|$.
\end{proof}

We are now able to present the remarks on the construction given in the proof of Theorem~\ref{theo:RNP-QNA}.

\begin{rem}\label{remark:RNP-rankone-S*attains}
Let $X$ and $Y$ be Banach spaces and let $T \in \Lin(X,Y)$ be a strong RNP operator. Consider for $\eps>0$ the point $y_0\in \overline{T(B_X)}\subset Y$ and the operator $S \in \QNA(X,Y)$ given in the proof of Theorem~\ref{theo:RNP-QNA}. Then,
\begin{enumerate}
\item[\textup{(a)}] $T-S$ is a rank-one operator.
\item[\textup{(b)}] $S(B_X) \subseteq T(B_X) + \{\lambda y_0\colon \lambda\in \KK,\, |\lambda|\leq \rho\}$ for some $\rho>0$.
\item[\textup{(c)}] $\overline{S(X)}=\overline{T(X)}$.
\item[\textup{(d)}] $S$ quasi attains its norm toward a point of the form $z_0=\lambda y_0$ for some $\lambda>0$.
\item[\textup{(e)}] $S^*$ attains its norm at some $z^* \in S_{Y^*}$ which strongly exposes $\overline{S(B_X)}$ at $z_0$.
\end{enumerate}
\end{rem}

\begin{proof}
We only have to prove (e), the other assertions are obvious from the proof. Let $z_0 \in \overline{S(B_X)} \cap \|S\|S_Y$ be the point satisfying the condition stated in Theorem~\ref{theo:RNP-QNA}.(ii) and take any $z^*\in S_{Y^*}$ such that $z^*(z_0)=\|z_0\|=\|S\|$. As $S\in \QNA(X,Y)$ quasi attains its norm toward $z_0$, Proposition~\ref{prop:QNAimpliesNAadjoint} gives that $S^*$ attains its norm at $z^*$.

We claim that $z^*$ strongly exposes $\overline{S(B_X)}$ at $z_0$. Indeed, suppose that $(z_n) \subseteq \overline{S(B_X)}$ satisfies that $$\re z^* (z_n) \longrightarrow \sup\{\re z^*(y)\colon y\in \overline{S(B_X)}\} = \|z_0\|.$$ Choose $(x_n) \subseteq B_X$ such that $\|Sx_n - z_n\| < 1/n$ for every $n \in \N$. Observe that $z^*(Sx_n) \longrightarrow \|z_0\|$ and so, in particular, $\|Sx_n\| \longrightarrow \|S\|$. By Theorem~\ref{theo:RNP-QNA}.(ii), there is a sequence $(\theta_n) \subseteq \mathbb{T}$ such that $S(\theta_n x_n) \longrightarrow z_0$.
Since $(1-\theta_n) z^*(Sx_n) \longrightarrow 0$ and $z^*(Sx_n)\longrightarrow \|z_0\|\neq 0$, we obtain that $\theta_n \longrightarrow 1$. Therefore, we deduce that $Sx_n \longrightarrow z_0$. Hence, $z_n \longrightarrow z_0$ as desired.
\end{proof}

As a consequence of Theorem~\ref{theo:RNP-QNA} and Examples~\ref{examples:NA-dense}.(a), if either $X$ or $Y$ has the RNP, then $\QNA(X,Y)$ is dense in $\Lin(X,Y)$.

\begin{corollary}\label{coro:XRNPYRNP-QNAdense}
Let $X$, $Y$ be Banach spaces. If $X$ or $Y$ has the RNP, then $\QNA(X,Y)$ is dense in $\Lin(X,Y)$.
\end{corollary}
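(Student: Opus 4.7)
The plan is to split into two cases according to whether the RNP is assumed on the domain or the range, and use a different tool in each case.

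First, if $X$ has the RNP, then by Examples~\ref{examples:NA-dense}.(a) (Bourgain's theorem), $\NA(X,Y)$ is already dense in $\Lin(X,Y)$, and since $\NA(X,Y) \subseteq \QNA(X,Y)$ by Remark~\ref{basic_remark}.(a), the conclusion is immediate in this case.

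Second, if $Y$ has the RNP, I would argue that every $T \in \Lin(X,Y)$ is automatically a strong RNP operator, so that Theorem~\ref{theo:RNP-QNA} applies directly. Given $T \in \Lin(X,Y)$, the set $T(B_X)$ is bounded and convex (as the image of a convex set under a linear map), so $\overline{T(B_X)}$ is a closed, bounded, convex subset of $Y$. Because $Y$ has the RNP, every closed bounded convex subset of $Y$ is an RNP set (this is the characterization of the RNP recalled at the start of Section~\ref{section:RNP-sufficient}, see \cite{Bourgin}). Thus $\overline{T(B_X)}$ is an RNP set, which by definition means that $T$ is a strong RNP operator. Now Theorem~\ref{theo:RNP-QNA} gives, for every $\eps > 0$, an operator $S \in \QNA(X,Y)$ with $\|S-T\| < \eps$, proving the density of $\QNA(X,Y)$ in $\Lin(X,Y)$.

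There is essentially no obstacle here: the whole content has been pushed into Theorem~\ref{theo:RNP-QNA} and into the classical characterization of the RNP via RNP sets. The only point worth being explicit about is the verification that the RNP of the range space does transfer to every operator as the strong RNP property, which is the trivial observation above.
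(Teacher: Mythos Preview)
Your proof is correct and follows exactly the approach indicated in the paper: the case where $X$ has the RNP is handled via Bourgain's theorem (Examples~\ref{examples:NA-dense}.(a)) together with $\NA(X,Y)\subseteq\QNA(X,Y)$, and the case where $Y$ has the RNP is handled by observing that every operator is then a strong RNP operator so that Theorem~\ref{theo:RNP-QNA} applies. You have simply spelled out the verification that $\overline{T(B_X)}$ is an RNP set when $Y$ has the RNP, which the paper leaves implicit.
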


This cover the case when at least one of the spaces $X$ or $Y$ is reflexive. Actually, in this case the result is also covered by the next statement which follows from Theorem~\ref{theo:RNP-QNA}, the well-known fact that weakly compact convex sets are RNP sets (see \cite{Bourgin}, for instance), and Remark~\ref{remark:RNP-rankone-S*attains}.(a).

\begin{corollary}\label{coro:weaklycompact}
For every Banach spaces $X$ and $Y$, $\overline{\QNA(X,Y)\cap \W(X,Y)}=\W(X,Y)$.
\end{corollary}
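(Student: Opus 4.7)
The plan is to prove the nontrivial inclusion $\W(X,Y) \subseteq \overline{\QNA(X,Y)\cap \W(X,Y)}$; the reverse inclusion is immediate since $\W(X,Y)$ is a closed subspace of $\Lin(X,Y)$. Fix $T\in \W(X,Y)$ and $\eps>0$; I will produce $S\in \QNA(X,Y)\cap \W(X,Y)$ with $\|S-T\|<\eps$.

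The first step is to observe that every weakly compact operator is a strong RNP operator in the sense of Section~\ref{section:RNP-sufficient}. Indeed, since $T$ is weakly compact, the set $T(B_X)$ is relatively weakly compact, and because $T(B_X)$ is convex, its norm closure $\overline{T(B_X)}$ is also convex; moreover $\overline{T(B_X)}$ is contained in the weakly compact convex set $\overline{T(B_X)}^{\mathrm{weak}}$. By the classical fact (see \cite{Bourgin}) that weakly compact convex subsets of a Banach space are RNP sets, every subset of $\overline{T(B_X)}^{\mathrm{weak}}$ is dentable, and hence every subset of $\overline{T(B_X)}$ is dentable as well. As $\overline{T(B_X)}$ is closed, convex, and bounded, this shows that it is an RNP set, so $T$ is a strong RNP operator.

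The second step is then a direct application of Theorem~\ref{theo:RNP-QNA}: there exists $S\in \QNA(X,Y)$ with $\|S-T\|<\eps$. Now I invoke Remark~\ref{remark:RNP-rankone-S*attains}.(a), which tells us that the perturbation $S-T$ produced in the proof of Theorem~\ref{theo:RNP-QNA} is a rank-one operator. In particular, $S-T$ is compact, hence weakly compact, and therefore $S = T + (S-T)$ lies in $\W(X,Y)$ as a sum of two weakly compact operators. Consequently $S\in \QNA(X,Y)\cap \W(X,Y)$, which is what we wanted.

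There is essentially no obstacle once Theorem~\ref{theo:RNP-QNA} is available: the only subtle point is ensuring that the approximating operator $S$ remains weakly compact, and this is precisely what the rank-one nature of the perturbation in Remark~\ref{remark:RNP-rankone-S*attains}.(a) guarantees. The verification that a weakly compact operator is a strong RNP operator is standard but worth making explicit, since it is the bridge that lets us feed $T\in \W(X,Y)$ into Theorem~\ref{theo:RNP-QNA}.
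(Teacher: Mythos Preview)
Your proof is correct and follows exactly the approach sketched in the paper: weakly compact operators are strong RNP operators (since weakly compact convex sets are RNP sets), Theorem~\ref{theo:RNP-QNA} then supplies the approximating $S\in\QNA(X,Y)$, and Remark~\ref{remark:RNP-rankone-S*attains}.(a) ensures the perturbation is rank-one so that $S$ stays in $\W(X,Y)$. One minor simplification: by Mazur's theorem the norm closure and weak closure of the convex set $T(B_X)$ coincide, so $\overline{T(B_X)}$ is itself weakly compact and you can skip the intermediate containment argument.
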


We may present now examples of pairs of Banach spaces $(X,Y)$ for which $\QNA(X,Y)$ is dense in $\Lin(X,Y)$ while $\NA(X,Y)$ is not and that not every operator is compact.

\begin{examples}~\label{examples:QNAdense-NAnot}\slshape
\begin{enumerate}
 \item[\textup{(a)}] Let $Y$ be a strictly convex infinite-dimensional Banach space with the RNP (in particular, let $Y=\ell_p$ with $1<p<\infty$). Then, there is a Banach space $X$ such that $\NA(X,Y)$ is not dense in $\Lin(X,Y)$ and $\K(X,Y)$ does not cover the whole of $\Lin(X,Y)$, while $\QNA(X,Y)$ is.
 \item[\textup{(b)}] There is a Banach space $X$ such that $\NA(X,\ell_1)$ is not dense and $\K(X,\ell_1)$ does not cover the whole of $\Lin(X,\ell_1)$, while $\QNA(X,\ell_1)$ is dense in $\Lin(X,\ell_1)$.
 \item[\textup{(c)}] In the complex case, given a decreasing sequence $w\in c_0\setminus \ell_1$ of positive numbers,
let $d(w,1)$ be the corresponding Lorentz sequences space and let $d_*(w,1)$ be its natural predual. If $w\in \ell_2$, then $\NA(d_*(w,1),d(w,1))$ is not dense in $\Lin(d_*(w,1),d(w,1))$ while $\QNA(d_*(w,1),d(w,1))$ is dense as $d(w,1)$ has the RNP.
\end{enumerate}
\end{examples}

Examples \ref{examples:QNAdense-NAnot}.(a) follows from \cite[Theorem 2.3]{Aco-strictly-convex} (for $Y=\ell_p$ is actually consequence of \cite[Appendix]{G2}) and Examples \ref{examples:QNAdense-NAnot}.(b) follows from \cite[Theorem 2.3]{Aco-L1}. For (c), we refer to \cite[\S 4]{ChoiKaminskaLee} for the definitions and basic properties of the spaces; the result on non-denseness of $\NA(d_*(w,1),d(w,1))$ appears in \cite[\S 4]{ChoiKaminskaLee}, while the RNP of $d(w,1)$ is immediate as it is a separable dual space.

We do not know whether there is a Banach space $Z$ such that $\QNA(Z,Z)$ is dense in $\Lin(Z,Z)$ while $\NA(Z,Z)$ is not dense, see Problem~\ref{problem:ellinftysum}.

We are now ready to present another important consequence of Theorem~\ref{theo:RNP-QNA}: two characterization of the RNP in terms of denseness of quasi norm attaining operators. Note that the RNP is an isomorphic property and so it is a sufficient condition to get the universal denseness conditions on equivalent renormings.

	\begin{corollary}\label{corollary:RNP-equivalence}
	Let $Z$ be a Banach space. Then, the following are equivalent.
	\begin{enumerate}
	\item[\textup{(a)}] $Z$ has the RNP.
	\item[\textup{(b)}] For every Banach space $Y$ and every equivalent renorming $Z'$ of $Z$, $\QNA(Z',Y)$ is dense in $\Lin(Z',Y)$.
	\item[\textup{(c)}] For every Banach space $X$ and every equivalent renorming $Z'$ of $Z$, $\QNA(X,Z')$ is dense in $\Lin(X,Z')$.
	\end{enumerate}
	\end{corollary}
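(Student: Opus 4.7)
The plan is to prove the corollary by establishing a cycle in which the implications (a)$\Rightarrow$(b) and (a)$\Rightarrow$(c) follow from the forward direction already proved (Corollary~\ref{coro:XRNPYRNP-QNAdense}), while the converses (b)$\Rightarrow$(a) and (c)$\Rightarrow$(a) are handled by contraposition using Huff's result as upgraded in Proposition~\ref{prop:huff}. The backbone fact I would invoke without proof is that the Radon--Nikod\'ym property is an isomorphic invariant, so any equivalent renorming $Z'$ of a space $Z$ with the RNP again has the RNP.

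For (a)$\Rightarrow$(b), assume $Z$ has the RNP and let $Z'$ be any equivalent renorming of $Z$ and $Y$ any Banach space. Since the RNP is preserved under equivalent renormings, $Z'$ has the RNP, and then Corollary~\ref{coro:XRNPYRNP-QNAdense} (applied with the domain space having the RNP) immediately yields that $\QNA(Z',Y)$ is dense in $\Lin(Z',Y)$. The implication (a)$\Rightarrow$(c) is identical, except that the RNP is now invoked on the range space: for any $X$ and any renorming $Z'$ of $Z$, the space $Z'$ still has the RNP, and Corollary~\ref{coro:XRNPYRNP-QNAdense} gives the denseness of $\QNA(X,Z')$ in $\Lin(X,Z')$.

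For the converses, I would argue by contraposition. Suppose $Z$ fails the RNP. Then Proposition~\ref{prop:huff} provides two Banach spaces $Z_1$ and $Z_2$, both isomorphic to $Z$ (hence both equivalent renormings of $Z$), such that $\QNA(Z_1,Z_2)$ is not dense in $\Lin(Z_1,Z_2)$. To refute (b), take $Z'=Z_1$ as the renorming of $Z$ and $Y=Z_2$: the pair $(Z',Y)$ violates the denseness asserted in (b). To refute (c), take $X=Z_1$ and $Z'=Z_2$ as the renorming of $Z$: again the pair $(X,Z')$ violates (c).

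This argument is essentially a bookkeeping exercise rather than a technical one; the nontrivial content is packaged into the two results already established. The only mild subtlety I would be careful about is ensuring that Proposition~\ref{prop:huff} really produces spaces $Z_1,Z_2$ that are equivalent renormings of $Z$ (i.e., isomorphic to $Z$) and not merely isomorphic to each other---the statement of Proposition~\ref{prop:huff} does guarantee this, so there is no obstacle. No display math or further calculation is needed.
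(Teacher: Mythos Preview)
Your proof is correct and essentially matches the paper's own argument: the paper likewise derives (b)$\Rightarrow$(a) and (c)$\Rightarrow$(a) from Proposition~\ref{prop:huff}, and obtains (a)$\Rightarrow$(b) from Bourgain's theorem (Examples~\ref{examples:NA-dense}.(a)) and (a)$\Rightarrow$(c) from Theorem~\ref{theo:RNP-QNA}, which is exactly what your appeal to Corollary~\ref{coro:XRNPYRNP-QNAdense} packages together.
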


	\begin{proof}
(a)$\Rightarrow$(b) is a consequence of \cite[Theorem 5]{B2} (here is Examples~\ref{examples:NA-dense}.(a)). (b)$\Rightarrow$(a) and (c)$\Rightarrow$(a) can be obtained from Proposition~\ref{prop:huff}. Finally, (a)$\Rightarrow$(c) follows from Theorem~\ref{theo:RNP-QNA}.
	\end{proof}

	Let us comment that in the case of norm attaining operators in the classical sense, while the analogous statement of (a) and (b) are equivalent, the same is not true for statement (c), as (a)$\Rightarrow$(c) does not hold (see Examples~\ref{examples:QNAdense-NAnot}). Therefore, the use of quasi norm attaintment gives a symmetry in the characterization of the Radon-Nikod\'{y}m property which is not possible for the classical norm attainment.

We next would like to take the advantage of Theorem~\ref{theo:RNP-QNA} in particular cases. To this end, we introduce some terminologies here.

\begin{definition}\label{definition:strong-sense}
Let $X$ and $Y$ be Banach spaces. We say that $T\in \Lin(X,Y)$ \emph{uniquely quasi attains its norm} if there is $u \in Y$ such that every sequence $(x_n)\subset B_X$ satisfying $\|Tx_n\|\longrightarrow \|T\|$ has a subsequence $(x_{\sigma(n)})$ satisfying that $Tx_{\sigma(n)}\longrightarrow \theta u$ for some $\theta \in \T$. In this case, we will say that $T$ uniquely quasi attains its norm \emph{toward} $u$ and that $T$ is a \emph{uniquely quasi norm attaining} operator.
\end{definition}

While it is obvious that norm attaining operators are quasi norm attaining (Remark~\ref{basic_remark}), it is not true that norm attaining operators are uniquely quasi norm attaining: the identity on any Banach space of dimension greater than one is clearly an example. Also, the fact that $B_X$ is an RNP set does not imply that so is $\overline{T(B_X)}$: indeed, consider a surjective map $T \in \Lin (\ell_1, c_0)$ with $\overline{T(B_{\ell_1})} = B_{c_0}$ (see \cite[Theorem~5.1]{FHHMZ} for example). Therefore, a version of Corollary~\ref{coro:XRNPYRNP-QNAdense} for uniquely quasi norm attainment does not follow from Theorem~\ref{theo:RNP-QNA} for the RNP in the domain space. However, the following notion introduced by J.~Bourgain in \cite[p.~268]{B2}, extending the concept of strongly exposing functional, forces uniquely quasi norm attainment. For Banach spaces $X$ and $Y$, an operator $T\in \Lin(X,Y)$ \emph{absolutely strongly exposes} the set $B_X$ ($T$ is an \emph{absolutely strongly exposing} operator) if there exists $x \in B_{X}$ such that whenever a sequence $(x_n) \subset B_X$ satisfies that $\lim_{n} \|Tx_n\| = \|T\|$, there is a subsequence $(x_{\sigma(n)})$ which converges to $\theta x$ for some $\theta\in \T$. It is clear that an absolutely strongly exposing operator $T \in \Lin (X,Y)$ is uniquely quasi norm attaining. It follows then from \cite[Theorem 5]{B2} that if $B_X$ is an RNP set (i.e.\ $X$ has the RNP), then the set of uniquely quasi norm attaining operators is dense in $\Lin (X,Y)$. Therefore, the following result follows from this fact and Theorem~\ref{theo:RNP-QNA}.

\begin{corollary}\label{coro:XRNPYRNP-QNAdense-strongversion}
Let $X$ and $Y$ be Banach spaces. If either $X$ or $Y$ has the RNP, then the set of uniquely quasi norm attaining operators from $X$ to $Y$ is dense in $\Lin(X,Y)$.
\end{corollary}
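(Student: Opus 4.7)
The plan is to address the two hypotheses on the RNP separately; in both cases the conclusion lies within easy reach of results already assembled just above.

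Suppose first that $Y$ has the RNP. Then every closed bounded convex subset of $Y$ is an RNP set; in particular, for each $T \in \Lin(X,Y)$, the set $\overline{T(B_X)}$ is an RNP set, so $T$ is a strong RNP operator. I would apply Theorem~\ref{theo:RNP-QNA} to $T$ and a given $\eps > 0$ to produce $S \in \QNA(X,Y)$ with $\|S-T\| < \eps$ together with a point $z_0 \in \overline{S(B_X)} \cap \|S\|S_Y$ satisfying condition (ii). The ``in particular'' clause of (ii) states exactly that any sequence $(x_n) \subseteq B_X$ with $\|Sx_n\| \to \|S\|$ admits a subsequence $(x_{\sigma(n)})$ for which $Sx_{\sigma(n)} \to \theta_0 z_0$ for some $\theta_0 \in \T$. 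Comparing this with Definition~\ref{definition:strong-sense} gives that $S$ uniquely quasi attains its norm toward $z_0$. Hence operators of this form are dense in $\Lin(X,Y)$.

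Suppose now that $X$ has the RNP, so that $B_X$ is itself an RNP set. The paragraph preceding the corollary records two facts which together finish this case: by Bourgain's theorem \cite[Theorem~5]{B2}, the set of absolutely strongly exposing operators from $X$ to $Y$ is dense in $\Lin(X,Y)$, and every absolutely strongly exposing operator is uniquely quasi norm attaining. Indeed, if $T$ absolutely strongly exposes $B_X$ at $x \in B_X$, then choosing any $(x_n) \subseteq B_X$ with $\|Tx_n\| \to \|T\|$ forces a subsequence $x_{\sigma(n)} \to \theta x$ with $\theta \in \T$, from which $\|x\| = 1$ and $\|Tx\| = \|T\|$ by continuity. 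For \emph{every} such norm-attaining sequence the same $x$ works, and continuity of $T$ converts $x_{\sigma(n)} \to \theta x$ into $Tx_{\sigma(n)} \to \theta\, Tx$, so $T$ uniquely quasi attains its norm toward the fixed vector $u = Tx$, in the sense of Definition~\ref{definition:strong-sense}.

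There is no real obstacle here; the corollary is essentially a bookkeeping consequence of Theorem~\ref{theo:RNP-QNA} on the range side and of Bourgain's absolutely strongly exposing operators on the domain side. The only point worth checking is that the conclusion of Theorem~\ref{theo:RNP-QNA}(ii) yields a \emph{fixed} target vector $z_0$ independent of the chosen norm-attaining sequence, which is precisely how that theorem is stated, so the match with Definition~\ref{definition:strong-sense} is direct.
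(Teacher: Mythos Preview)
Your proof is correct and follows essentially the same approach as the paper: for $Y$ with the RNP you invoke Theorem~\ref{theo:RNP-QNA} and read off unique quasi norm attainment from condition~(ii), while for $X$ with the RNP you use Bourgain's theorem on the denseness of absolutely strongly exposing operators and the observation that such operators are uniquely quasi norm attaining. The paper argues in exactly this way in the paragraph preceding the corollary; your write-up simply adds a few extra details (e.g.\ deducing $\|x\|=1$ and $\|Tx\|=\|T\|$) that the paper leaves implicit.
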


Two more consequences of Theorem~\ref{theo:RNP-QNA} can be stated for compact operators and weakly compact operators (as compact and weakly compact sets are RNP sets) with the help of Remark~\ref{remark:RNP-rankone-S*attains}.(a), as it follows that when we start with a compact or weakly compact operator $T$ in the proof of Theorem~\ref{theo:RNP-QNA}, the operator $S$ that it is obtained is compact or weakly compact, respectively. In the case of compact operators, we already know that they quasi attain their norms (see Remark~\ref{basic_remark}.(b)), but the result is still interesting as it is stronger than that (observe that the identity on a two-dimensional Banach space is compact but it is not uniquely quasi norm attaining).

\begin{corollary}\label{coro:weaklycompact_compact_absolutelystronglyexposingQNA}
Let $X$ and $Y$ be Banach spaces.
\begin{enumerate}
\item[\textup{(a)}] Compact operators from $X$ to $Y$ which uniquely quasi attain their norm are dense in $\K (X,Y)$.
\item[\textup{(b)}] Weakly compact operators from $X$ to $Y$ which uniquely quasi attain their norm are dense in $\W (X,Y)$.
\end{enumerate}
\end{corollary}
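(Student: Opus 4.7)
The plan is to apply Theorem~\ref{theo:RNP-QNA} directly, since both norm-compact and weakly compact convex sets are RNP sets. Given $T$ in $\K(X,Y)$ or $\W(X,Y)$ and $\varepsilon > 0$, I would first argue that $\overline{T(B_X)}$ is an RNP set: in the compact case it is norm compact, and in the weakly compact case $T(B_X)$ is convex and relatively weakly compact, so by Mazur's theorem its norm closure equals its weak closure and is therefore weakly compact --- either way an RNP set (see the paragraph preceding Corollary~\ref{coro:weaklycompact}). Thus $T$ is a strong RNP operator and Theorem~\ref{theo:RNP-QNA} supplies $S \in \QNA(X,Y)$ with $\|S-T\| < \varepsilon$ satisfying property (ii) of that theorem.

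Next I would verify that $S$ inherits (weak) compactness from $T$. By Remark~\ref{remark:RNP-rankone-S*attains}.(a), the difference $R := T - S$ is a rank-one operator, hence compact, so $S = T - R$ is compact when $T$ is compact and weakly compact when $T$ is weakly compact (both $\K(X,Y)$ and $\W(X,Y)$ being closed subspaces of $\Lin(X,Y)$). Alternatively, Remark~\ref{remark:RNP-rankone-S*attains}.(b) already displays $S(B_X)$ as a subset of $T(B_X)$ plus a norm-compact set, which makes (weak) relative compactness transparent.

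Finally, property (ii) of Theorem~\ref{theo:RNP-QNA} is precisely what is needed to conclude: the point $z_0 \in \|S\|S_Y$ provided there has the property that whenever $(x_n) \subseteq B_X$ satisfies $\|Sx_n\| \longrightarrow \|S\|$, some subsequence satisfies $Sx_{\sigma(n)} \longrightarrow \theta_0 z_0$ for a suitable $\theta_0 \in \mathbb{T}$, which is exactly the condition in Definition~\ref{definition:strong-sense} that $S$ uniquely quasi attains its norm toward $z_0$. There is no real obstacle here: everything has been prepared in Theorem~\ref{theo:RNP-QNA} and Remark~\ref{remark:RNP-rankone-S*attains}, and the proof amounts to the short assembly just sketched. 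The only minor point worth being careful about is the passage from relative weak compactness of $T(B_X)$ to weak compactness of $\overline{T(B_X)}$, which is why the convexity of $T(B_X)$ (and hence Mazur's theorem) enters in the weakly compact case.
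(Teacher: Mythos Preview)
Your proposal is correct and follows essentially the same route as the paper: the paragraph preceding the corollary already explains that compact and weakly compact convex sets are RNP sets, so Theorem~\ref{theo:RNP-QNA} applies, and Remark~\ref{remark:RNP-rankone-S*attains}.(a) guarantees that the approximating operator $S$ remains compact (respectively, weakly compact). Your added observation about Mazur's theorem to identify the norm closure with the weak closure of $T(B_X)$ is a welcome bit of care that the paper leaves implicit.
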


The following result is an immediate consequence of Corollary~\ref{coro:XRNPYRNP-QNAdense-strongversion}, which will be useful in many applications later on.

\begin{corollary}\label{corollary:Gamma-RNP-dense}
Let $X$ and $Y$ be Banach spaces, $\Gamma \subseteq B_X$ such that $\overline{\co}(\T\Gamma)=B_X$ and suppose that $Y$ has the RNP. Then, for every $T\in \Lin(X,Y)$ and every $\eps>0$, there is $S\in \Lin(X,Y)$ with $\|T-S\|<\eps$ and a sequence $(x_n)\subseteq \Gamma$ such that $(Sx_n)$ converges to some $y_0\in \|S\|S_Y$. In other words, the set
\[
\bigl\{T \in \Lin(X,Y) \colon Tx_n \longrightarrow y_0 \textup{ for some } (x_n) \subseteq \Gamma \textup{ and } y_0 \in \|T\|S_Y \bigr\}
\]
is dense in $\Lin(X,Y)$.
\end{corollary}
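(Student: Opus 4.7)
My plan is to deduce this directly from Corollary~\ref{coro:XRNPYRNP-QNAdense-strongversion} together with the observation that the condition $\overline{\co}(\T\Gamma) = B_X$ makes $\Gamma$ a norming set for operators out of $X$, in the sense that $\|S\| = \sup_{x \in \Gamma}\|Sx\|$.

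First, I would invoke Corollary~\ref{coro:XRNPYRNP-QNAdense-strongversion} applied with the range space $Y$ having the RNP: given $T \in \Lin(X,Y)$ and $\eps>0$, it produces an operator $S \in \Lin(X,Y)$ with $\|S-T\|<\eps$ which uniquely quasi attains its norm toward some $u \in Y$ with $\|u\|=\|S\|$. We may assume $\|S\|\neq 0$; otherwise the conclusion is trivial (take any sequence in $\Gamma$, $y_0=0$).

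Next, I would show $\|S\| = \sup\{\|Sx\| : x \in \Gamma\}$. For any $x \in B_X = \overline{\co}(\T\Gamma)$ one can take a net (indeed sequence, if desired) of the form $\sum_i \lambda_i \theta_i x_i$ with $x_i \in \Gamma$, $\theta_i \in \T$, $\lambda_i \geq 0$, and $\sum_i \lambda_i \leq 1$ converging to $x$; by continuity and convexity of $\|\cdot\| \circ S$ we get
\[
\|Sx\| \leq \sup_i \|S(\theta_i x_i)\| = \sup_i \|S x_i\| \leq \sup_{y\in \Gamma}\|Sy\|.
\]
Taking the supremum over $x \in B_X$ gives $\|S\| \leq \sup_{y\in \Gamma}\|Sy\|$, and the reverse inequality holds because $\Gamma \subseteq B_X$.

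Consequently, I can select a sequence $(x_n) \subseteq \Gamma$ with $\|S x_n\| \longrightarrow \|S\|$. Since $S$ uniquely quasi attains its norm toward $u$, there exist $(\theta_n) \subseteq \T$ and a subsequence, which I still denote $(x_n)$ after passing, with $S x_n \longrightarrow \theta u$ for some $\theta \in \T$. Setting $y_0 := \theta u$ we have $\|y_0\| = \|u\| = \|S\|$, and $(x_n) \subseteq \Gamma$ satisfies $S x_n \longrightarrow y_0 \in \|S\|S_Y$, which is exactly the desired conclusion.

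There is no real obstacle here; the only point requiring a small argument is the norming step, and it is just the convexity/sublinearity of $x \mapsto \|Sx\|$ combined with the density assumption $\overline{\co}(\T\Gamma) = B_X$. The substance of the corollary is entirely loaded into the uniquely-quasi-norm-attaining strengthening provided by Corollary~\ref{coro:XRNPYRNP-QNAdense-strongversion}, which is what forces the approximating sequence to be extractable from the prescribed set $\Gamma$ rather than from the entire unit ball.
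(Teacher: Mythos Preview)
Your proof is correct and follows essentially the same approach as the paper's: invoke Corollary~\ref{coro:XRNPYRNP-QNAdense-strongversion} to get a uniquely quasi norm attaining approximant $S$, use the hypothesis $\overline{\co}(\T\Gamma)=B_X$ to ensure $\sup_{x\in\Gamma}\|Sx\|=\|S\|$, pick a maximizing sequence in $\Gamma$, and extract a convergent subsequence via unique quasi norm attainment. The paper states the norming step in one line (``by the assumption''), while you spell out the convexity argument; otherwise the proofs coincide.
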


\begin{proof}
Given $T \in \Lin (X,Y)$, since $Y$ has the RNP, Corollary~\ref{coro:XRNPYRNP-QNAdense-strongversion} provides an operator $S\in \QNA(X,Y)$ such that $\|T-S\|<\eps$ and $z_0\in \|S\|S_Y$ such that given any sequence $(x_n)\subseteq B_X$ with $\|Sx_n\|\longrightarrow \|S\|$, then there exists a subsequence $(x_{\sigma(n)})$ and $\theta_0\in \T$ such that $Sx_{\sigma(n)}\longrightarrow \theta_0 z_0$. Now, as $$\sup\{\|Sx\|\colon x\in \Gamma\}=\|S\|$$ by the assumption, we may find a sequence $(x_n)\subseteq \Gamma$ such that $\|Sx_n\|\longrightarrow \|S\|$ and the result follows.
\end{proof}

The rest of this section is devoted to the applications of Corollary \ref{corollary:Gamma-RNP-dense} in some situations: norm attainment on Lipschitz maps, multilinear maps, and homogeneous polynomials.

\subsection{An application: Lipschitz maps attaining the norm toward vectors}

Recall from Definition~\ref{def:LipA} that given real Banach spaces $X$ and $Y$, $\LipA(X,Y)$ denotes the set of those Lipschitz maps $f\in \Lip(X,Y)$ for which there exist a sequence of pairs $\bigl((x_n,y_n)\bigr) \subseteq \widetilde{X}$ and $u\in \|f\|_\text{Lip}S_Y$ satisfying
	\[
	\frac{f(x_n)-f(y_n)}{\|x_n-y_n\|} \longrightarrow u.
	\]

As a consequence of Corollary~\ref{coro:XRNPYRNP-QNAdense-strongversion}, we obtain the following result.

\begin{corollary}\label{corollary:RNP-LDA-dense}
Let $X$ and $Y$ be real Banach spaces such that $Y$ has the RNP. Then, $\LipA(X,Y)$ is dense in $\Lip(X,Y)$.
\end{corollary}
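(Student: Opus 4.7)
My proof plan relies on transferring the statement to linear operators via the Lipschitz-free space construction, and then invoking Corollary~\ref{corollary:Gamma-RNP-dense} with the set of molecules as the norming set $\Gamma$.

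The first step is to recall the standard duality: there is an isometric isomorphism $\Phi \colon \Lin(\mathcal{F}(X),Y) \longrightarrow \Lip(X,Y)$ given by $\Phi(T) = T \circ \delta_X$, where $\delta_X \colon X \hookrightarrow \mathcal{F}(X)$ is the canonical isometric embedding of $X$ into its Lipschitz-free space, and under this identification $\|T\| = \|\Phi(T)\|_{\text{Lip}}$. The molecules $m_{x,y} := \bigl(\delta_X(x)-\delta_X(y)\bigr)/\|x-y\|$ for $(x,y) \in \widetilde{X}$ form a subset $\Gamma \subseteq S_{\mathcal{F}(X)}$ whose absolutely convex closed hull is all of $B_{\mathcal{F}(X)}$ (this is well known, being a direct consequence of the universal property of $\mathcal{F}(X)$ and Hahn-Banach applied in $\Lip(X,\R) = \mathcal{F}(X)^*$). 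In the real setting $\T = \{-1,1\}$, so the condition $\overline{\co}(\T\Gamma) = B_{\mathcal{F}(X)}$ required in Corollary~\ref{corollary:Gamma-RNP-dense} is satisfied.

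The second step is the actual application. Given $f \in \Lip(X,Y)$ and $\eps > 0$, let $T = \Phi^{-1}(f) \in \Lin(\mathcal{F}(X),Y)$. Since $Y$ has the RNP, Corollary~\ref{corollary:Gamma-RNP-dense} applied to the Banach spaces $\mathcal{F}(X)$ and $Y$ and to $\Gamma$ produces $S \in \Lin(\mathcal{F}(X),Y)$ with $\|T - S\| < \eps$, a sequence of molecules $(m_{x_n,y_n}) \subseteq \Gamma$, and a point $u \in \|S\|S_Y$ such that $Sm_{x_n,y_n} \longrightarrow u$. Setting $g := \Phi(S) \in \Lip(X,Y)$ and unwinding the definitions yields
\[
\frac{g(x_n) - g(y_n)}{\|x_n - y_n\|} = S m_{x_n, y_n} \longrightarrow u \qquad \text{with } \|u\| = \|S\| = \|g\|_{\text{Lip}},
\]
so $g \in \LipA(X,Y)$. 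Since $\|f - g\|_{\text{Lip}} = \|T - S\| < \eps$, we conclude $\LipA(X,Y)$ is dense in $\Lip(X,Y)$.

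The only slightly delicate point is verifying that molecules truly form a norming set in the sense required by Corollary~\ref{corollary:Gamma-RNP-dense}, namely $\overline{\co}(\T\Gamma) = B_{\mathcal{F}(X)}$; but this is a basic property of $\mathcal{F}(X)$ that one can either cite or prove in one line by duality (if some $\varphi \in \mathcal{F}(X)^* = \Lip(X,\R)$ satisfied $\sup_{m\in \T\Gamma} \varphi(m) < 1$, then $\|\varphi\|_{\text{Lip}} < 1$, contradicting the existence of a separating Lipschitz functional). No other obstacle arises: the Lipschitz-free space machinery converts the linear result directly into the Lipschitz one, and the main theoretical content (the RNP hypothesis on $Y$) has already been used to establish Corollary~\ref{corollary:Gamma-RNP-dense}.
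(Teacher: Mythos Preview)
Your proof is correct and follows essentially the same approach as the paper: both linearize via the Lipschitz-free space $\mathcal{F}(X)$, apply Corollary~\ref{corollary:Gamma-RNP-dense} with $\Gamma = \operatorname{Mol}(X)$, and translate the resulting quasi norm attaining linear operator back to a Lipschitz map in $\LipA(X,Y)$. Your added justification that $\overline{\co}(\T\Gamma)=B_{\mathcal{F}(X)}$ via Hahn--Banach is a welcome detail (the paper simply cites this fact), and noting that $\operatorname{Mol}(X)$ is already symmetric so $\T\Gamma=\Gamma$ makes the hypothesis check transparent.
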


We need to introduce some terminology. Recall that the \emph{Lipschitz-free space} $\F(X)$ over $X$ is a closed linear subspace of $\Lip(X,\R)^*$ defined by
	\[
	\F(X) := \overline{\spann}\{ \delta_x \colon x \in X\},
	\]
where $\delta_x$ is the canonical point evaluation of a Lipschitz map $f$ at $x$ given by $\delta_x(f) := f(x)$. We refer the reader to the paper \cite{Godefroy-survey-2015} and the book \cite{weaver2} (where it is called Arens-Eells space) for a detailed account on Lipschitz free spaces. The following properties of $\F(X)$ can be found there. It is well known that $\F(X)$ is indeed an isometric predual of $\Lip(X,\R)$. Moreover, for any Lipschitz map $f \in \Lip(X,Y)$, we can define a unique bounded linear operator $T_f\in \Lin(\F(X),Y)$ by $T_f(\delta_x) := f(x)$, which satisfies that $\|T_f\|=\|f\|_{\text{Lip}}$; furthermore, $\Lip(X,Y)$ is isometrically isomorphic to $\Lin(\F(X),Y)$ via this correspondence between $f$ and $T_f$. We define a set of \emph{molecules} of $X$ by
	\[
	\operatorname{Mol}(X) := \left\{ m_{x,y} := \frac{\delta_x-\delta_y}{\|x-y\|} \colon (x,y) \in \widetilde{X} \right\} \subseteq \F(X).
	\]
An easy consequence of the Hahn-Banach theorem is that $B_{\F(X)} = \overline{\co}(\operatorname{Mol}(X))$.

\begin{proof}[Proof of Corollary~\ref{corollary:RNP-LDA-dense}]
Let $\eps >0$ and $f \in \Lip (X,Y)$ be given. As $Y$ has the RNP, applying Corollary~\ref{corollary:Gamma-RNP-dense} for the set $\Gamma=\operatorname{Mol}(X)$, there exists $G \in \QNA(\F(X), Y)$, a sequence $\left(m_{p_n, q_n}\right) \in \operatorname{Mol}(X)$, and $y_0\in \|G\|S_Y$ such that
$$
\|T_f - G\| < \eps \quad \text{and} \quad G\left(m_{p_n, q_n}\right) \longrightarrow y_0.
$$
If we take (the unique) $g \in \Lip(X,Y)$ such that $T_g = G$, then $\|f-g\|_{\text{Lip}}=\|T_f-G\|<\eps$ and
	\[
	\frac{g(p_{n})-g(q_{n})}{\|p_{n}-q_{n}\|}=G\left(m_{p_n, q_n}\right) \longrightarrow y_0.
	\]
This shows that $g \in \LipA(X,Y)$ and it completes the proof.
\end{proof}

\subsection{An application: quasi norm attaining multilinear maps and polynomials}
Let $X_1, \ldots, X_n$ and $Y$ be Banach spaces. The set of all bounded $n$-linear maps from $X_1 \times \cdots \times X_n$ to $Y$ will be denoted by $\Lin (X_1, \ldots,X_n;Y)$. As usual, we define the norm of $A \in \Lin (X_1, \ldots, X_n ; Y)$ by
\[
\|A\| = \sup \bigl\{ \|A(x_1, \ldots, x_n) \| \colon (x_1, \ldots, x_n) \in B_{X_1} \times \cdots \times B_{X_n} \bigr\}.
\]
The following definition is a natural extension of quasi norm attainment from linear operators to multilinear maps.

\begin{definition}\label{definition:QNA-multilinear}
We say that $A \in \Lin(X_1,\ldots,X_n;Y)$ \emph{quasi attains its norm} (in short, $A \in \QNA(X_1,\ldots,X_n;Y)$) if $$\overline{A(B_{X_1}\times \cdots \times B_{X_n})}\cap \|A\|S_Y \neq \emptyset,$$
or, equivalently, if there exist a sequence $\bigl(x^{(1)}_m, \ldots, x^{(n)}_m\bigr)\subseteq B_{X_1} \times \cdots \times B_{X_n}$ and a point $u \in \|A\|S_Y$ such that
	\[
A\bigl(x^{(1)}_m,\ldots,x^{(n)}_m\bigr) \longrightarrow u.
	\]
In this case, we say that $A$ \emph{quasi attains its norm toward} $u$.
\end{definition}

Here, the consequence of Corollary~\ref{corollary:Gamma-RNP-dense} in this setting is the following.

\begin{corollary}\label{corollary:multilinearMaps}
Let $X_1,\ldots X_n$ and $Y$ be Banach spaces. If $Y$ has the RNP, then $\QNA(X_1,\ldots,X_n;Y)$ is dense in $\Lin(X_1,\ldots,X_n;Y)$.
\end{corollary}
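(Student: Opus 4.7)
The plan is to reduce Corollary~\ref{corollary:multilinearMaps} to its linear counterpart by linearizing through the projective tensor product, and then invoke Corollary~\ref{corollary:Gamma-RNP-dense} with the set of elementary tensors playing the role of $\Gamma$. Set $Z := X_1 \widehat{\otimes}_\pi \cdots \widehat{\otimes}_\pi X_n$. Two standard facts about the projective tensor product will be used: first, the universal property yields an isometric isomorphism
\[
\Phi \colon \Lin(X_1,\ldots,X_n;Y) \longrightarrow \Lin(Z,Y)
\]
characterized by $\Phi(A)(x_1 \otimes \cdots \otimes x_n) = A(x_1,\ldots,x_n)$; second, the definition of the projective norm gives $B_Z = \overline{\co}(\T\Gamma)$ for the set
\[
\Gamma := \bigl\{ x_1 \otimes \cdots \otimes x_n \colon (x_1,\ldots,x_n) \in B_{X_1} \times \cdots \times B_{X_n} \bigr\} \subseteq B_Z.
\]

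Given $A \in \Lin(X_1,\ldots,X_n;Y)$ and $\eps > 0$, I would set $T := \Phi(A) \in \Lin(Z,Y)$. Since $Y$ has the RNP, Corollary~\ref{corollary:Gamma-RNP-dense} (applied to $T$, $\Gamma$, and $\eps$) produces an operator $S \in \Lin(Z,Y)$ with $\|T - S\| < \eps$, a sequence $(z_m) \subseteq \Gamma$, say $z_m = x_m^{(1)} \otimes \cdots \otimes x_m^{(n)}$, and a vector $y_0 \in \|S\|S_Y$ with $S(z_m) \longrightarrow y_0$. Put $B := \Phi^{-1}(S) \in \Lin(X_1,\ldots,X_n;Y)$. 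Then $\|A - B\| = \|T - S\| < \eps$ and $\|B\| = \|S\| = \|y_0\|$, while
\[
B\bigl(x_m^{(1)},\ldots,x_m^{(n)}\bigr) = S(z_m) \longrightarrow y_0,
\]
so $B$ quasi attains its norm toward $y_0$, witnessing density.

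There is essentially no obstacle beyond the two recalled facts about $\widehat{\otimes}_\pi$, both of which are immediate from the definition of the projective norm. In particular, no work specific to the multilinear setting is needed: all of the analytic content has already been packaged into Corollary~\ref{corollary:Gamma-RNP-dense}, and the linearization simply transports the conclusion back to the multilinear side. The same scheme, applied with $Z$ replaced by the symmetric projective tensor product and $\Gamma$ by the set $\{x \otimes \cdots \otimes x \colon x \in B_X\}$, would handle the $n$-homogeneous polynomial case needed for Corollary~\ref{corollary:polynomials}.
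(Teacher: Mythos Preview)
Your proposal is correct and follows essentially the same approach as the paper: linearize via the projective tensor product, apply Corollary~\ref{corollary:Gamma-RNP-dense} with $\Gamma$ the set of elementary tensors from $B_{X_1}\times\cdots\times B_{X_n}$, and pull the resulting operator back to a multilinear map. The paper's proof is identical in structure and detail, and it likewise remarks that the same scheme with the symmetric projective tensor product handles Corollary~\ref{corollary:polynomials}.
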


Analogously to what happens with Lipschitz maps, we present a way to linearize multilinear maps: the projective tensor product. Given Banach spaces $X_1,X_2,\ldots,X_n$, the \emph{projective tensor product} of the spaces, which will be denoted by $X_1 \otimes_\pi \cdots \otimes_\pi X_n$, is the space $X_1 \otimes \cdots \otimes X_n$ endowed with the \emph{projective norm} $\pi$. We write $X_1 \widetilde{\otimes}_\pi\cdots \widetilde{\otimes}_\pi X_n$ for its completion. It is well known that given any $A\in \Lin(X_1,\ldots,X_n;Y)$, there is a unique $\widehat{A}\in \Lin\left(X_1 \widetilde{\otimes}_\pi \cdots \widetilde{\otimes}_\pi X_n,Y\right)$ such that
\[
\widehat{A}(x_1\otimes \cdots \otimes x_n)=A(x_1,\ldots,x_n) \qquad \text{for all } x_1\otimes \cdots \otimes x_n\in {X_1}\widetilde{\otimes}_\pi \cdots \widetilde{\otimes}_\pi {X_n}.
\]
Moreover, the spaces $\Lin(X_1,\ldots,X_n;Y)$ and $\Lin\left(X_1 \widetilde{\otimes}_\pi \cdots \widetilde{\otimes}_\pi X_n,Y\right)$ are isometrically isomorphic through this correspondence and the closed unit ball of $X_1 \widetilde{\otimes}_\pi \cdots \widetilde{\otimes}_\pi X_n$ is the absolutely closed convex hull of $B_{X_1}\otimes \cdots \otimes B_{X_n} := \{ x_1 \otimes \cdots \otimes x_n : x_i \in B_{X_i}, 1\leq i \leq n \}$ (see \cite[Proposition~16.8]{DeGaMaSe}, for instance).

\begin{proof}[Proof of Corollary~\ref{corollary:multilinearMaps}]
Let $\eps>0$ and $A \in \Lin(X_1,\ldots,X_n;Y)$ be given. Let $\widehat{A} \in \Lin(X_1 \widetilde{\otimes}_\pi \cdots \widetilde{\otimes}_\pi X_n,Y)$ be the corresponding linear operator on the tensor product space defined as above. Now, it follows by Corollary \ref{corollary:Gamma-RNP-dense} applied to $\Gamma := B_{X_1} \otimes \cdots \otimes B_{X_n} \subseteq B_{X_1 \widetilde{\otimes}_\pi \cdots \widetilde{\otimes}_\pi X_n}$, that there exist $\widehat{S} \in \QNA(X_1 \widetilde{\otimes}_\pi\cdots \widetilde{\otimes}_\pi X_n,Y)$, a sequence $\bigl(x_1^{(m)} \otimes \cdots \otimes x_n^{(m)}\bigr)_{m\in\N} \subseteq \Gamma$ and a point $u \in \|\widehat{S}\|S_Y$ such that $\|\widehat{S}-\widehat{A}\|<\eps$ and $\widehat{S}\bigl(x_1^{(m)} \otimes \cdots \otimes x_n^{(m)}\bigr) \longrightarrow u$. Define a $n$-linear map $S \colon X_1 \times \cdots \times X_n \longrightarrow Y$ by $S(x_1, \ldots, x_n) := \widehat{S}(x_1 \otimes \cdots \otimes x_n)$. Then, we can conclude that $S(x_1^{(m)}, \ldots, x_n^{(m)}) \longrightarrow u$ with $\|u\|=\|\widehat{S}\|=\|S\|$ and that $\|S-A\|<\eps$.
\end{proof}

Analogously, we can get a similar result on quasi norm attaining polynomials. Let $X$ and $Y$ be Banach spaces, and let $n \in \N$ be given. Recall that an $n$-linear mapping $A \in \Lin ({X, \overset{n}{\ldots}, X}; Y)$ is said to be \emph{symmetric} if $A(x_1, \dots, x_n) = A(x_{\sigma(1)}, \dots, x_{\sigma(n)})$ for any $(x_1, \dots, x_n) \in X \times \overset{n}{\cdots} \times X$ and any permutation $\sigma$ of the set $\{1, \dots, n\}$. We let $\Lin_s ({X, \overset{n}{\ldots}, X}; Y)$ denote the space of all bounded symmetric $n$-linear mappings endowed with the norm
\[
\|A\| = \sup \bigl\{ \|A(x, \ldots, x) \| \colon x \in B_X \bigr\}.
\]
A mapping $P \colon X \longrightarrow Y$ is an \emph{$n$-homogeneous polynomial} if there is a symmetric $n$-linear map $\widecheck{P} \in \Lin_s({X, \overset{n}{\ldots}, X};Y)$ such that $P(x)=\widecheck{P}(x, \ldots, x)$. We denote by $\mathcal{P}(^nX;Y)$ the space of all continuous $n$-homogeneous polynomials from $X$ to $Y$ equipped with the usual norm $\| P \| = \sup_{x \in B_X} \| Px\|$ for $P \in \mathcal{P}(^n X; Y)$.

\begin{definition}\label{definition:QNA-polynomial}
We say that $P \in \mathcal{P}(^nX;Y)$ \emph{quasi attains its norm} (in short, $P \in \QNA(\mathcal{P}(^nX;Y)))$ if
	\[
	\overline{P(B_X)} \cap \|T\| S_Y \neq \emptyset,
	\]
or equivalently, if there exist a sequence $(x_m) \subseteq B_X$ and a point $u \in \|P\|S_Y$ such that $Px_m \longrightarrow u$.
\end{definition}

Here, the consequence of Corollary~\ref{corollary:Gamma-RNP-dense} in this setting is the following.

\begin{corollary}\label{corollary:polynomials}
Let $X$ and $Y$ be Banach spaces, and let $n \in \N$ be given. If $Y$ has the RNP, then $\QNA(\mathcal{P}(^nX;Y))$ is dense in $\mathcal{P}(^nX;Y)$.
\end{corollary}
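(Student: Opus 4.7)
The strategy mirrors the multilinear case (Corollary~\ref{corollary:multilinearMaps}): I would linearize $n$-homogeneous polynomials through the $n$-fold symmetric projective tensor product $\widehat{\otimes}_{\pi,s}^{n}X$ and then apply Corollary~\ref{corollary:Gamma-RNP-dense}. The relevant linearization is the canonical isometric isomorphism between $\mathcal{P}(^{n}X;Y)$ and $\Lin(\widehat{\otimes}_{\pi,s}^{n}X,Y)$ which assigns to each $P\in\mathcal{P}(^{n}X;Y)$ a unique operator $\widehat{P}\in\Lin(\widehat{\otimes}_{\pi,s}^{n}X,Y)$ determined by $\widehat{P}(x\otimes\cdots\otimes x)=P(x)$ for every $x\in X$, and satisfying $\|\widehat{P}\|=\|P\|$.

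The basic fact I would use about the symmetric projective tensor product is the description of its closed unit ball
$$B_{\widehat{\otimes}_{\pi,s}^{n}X}=\overline{\co}\bigl(\T\{x\otimes\cdots\otimes x \colon x\in B_X\}\bigr),$$
which follows from the definition of the symmetric projective norm (a standard result in the theory of symmetric tensor products). Setting $\Gamma:=\{x\otimes\cdots\otimes x \colon x\in B_X\}\subseteq B_{\widehat{\otimes}_{\pi,s}^{n}X}$, this identity is precisely the hypothesis $\overline{\co}(\T\Gamma)=B_{\widehat{\otimes}_{\pi,s}^{n}X}$ required to invoke Corollary~\ref{corollary:Gamma-RNP-dense}.

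Given $P\in\mathcal{P}(^{n}X;Y)$ and $\eps>0$, I would apply Corollary~\ref{corollary:Gamma-RNP-dense} (using that $Y$ has the RNP) to $\widehat{P}$ with this choice of $\Gamma$: it produces an operator $\widehat{S}\in\Lin(\widehat{\otimes}_{\pi,s}^{n}X,Y)$ with $\|\widehat{S}-\widehat{P}\|<\eps$, a sequence $(x_m)\subseteq B_X$, and a point $u\in\|\widehat{S}\|S_Y$ such that $\widehat{S}(x_m\otimes\cdots\otimes x_m)\longrightarrow u$. Defining $S \in \mathcal{P}(^{n}X;Y)$ by $S(x):=\widehat{S}(x\otimes\cdots\otimes x)$, the linearization isometry yields $\|S-P\|=\|\widehat{S}-\widehat{P}\|<\eps$ and $\|S\|=\|\widehat{S}\|=\|u\|$, while $S(x_m)=\widehat{S}(x_m\otimes\cdots\otimes x_m)\longrightarrow u$. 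Hence $S\in\QNA(\mathcal{P}(^{n}X;Y))$, as required.

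The only non-routine ingredient is the description of $B_{\widehat{\otimes}_{\pi,s}^{n}X}$ above; everything else is formal translation via the polynomial/linear-operator correspondence. An alternative route would be to approximate the associated symmetric $n$-linear form $\widecheck{P}$ via Corollary~\ref{corollary:multilinearMaps} and then restrict to the diagonal, but this is delicate because the polarization formula only controls the norms up to a dimensional constant and does not automatically transfer quasi norm attainment from the $n$-linear map to the diagonal polynomial; the symmetric-tensor-product argument above avoids this obstacle entirely.
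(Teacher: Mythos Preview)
Your proof is correct and is essentially identical to the paper's own argument: both linearize through the $n$-fold symmetric projective tensor product, use that the unit ball is the absolutely closed convex hull of $\{x\otimes\cdots\otimes x : x\in B_X\}$, and apply Corollary~\ref{corollary:Gamma-RNP-dense} with this set as $\Gamma$. Your closing remark on why the multilinear route via polarization is problematic is a nice addition not present in the paper.
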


The proof is almost the same as that for Corollary \ref{corollary:multilinearMaps}.
We denote by $X \otimes_{\pi_s} \overset{n}{\cdots} \otimes_{\pi_s} X$ the \emph{$n$-fold symmetric tensor product} of a Banach space $X$, endowed with the \emph{symmetric tensor norm} $\pi_s$, and by $X \widetilde{\otimes}_{\pi_s} \overset{n}{\cdots} \, \widetilde{\otimes}_{\pi_s} X$ its completion. We refer the reader to the Chapter 16 of the recent book \cite{DeGaMaSe} for a detailed account on symmetric tensor product spaces. Note that the spaces $\mathcal{P}(^nX;Y)$ and $\Lin (X \widetilde{\otimes}_{\pi_s} \overset{n}{\cdots} \, \widetilde{\otimes}_{\pi_s} X, Y)$ are isometrically isomorphic by means of the correspondence $P \in \mathcal{P}(^nX;Y) \longmapsto \widetilde{P} \in \Lin (X \widetilde{\otimes}_{\pi_s} \overset{n}{\cdots} \, \widetilde{\otimes}_{\pi_s} X, Y)$, where
\[
\widetilde{P} (x \otimes \cdots \otimes x) = P(x) \qquad \text{for all } x \in X,
\]
see \cite[Proposition 16.23]{DeGaMaSe} for instance. Moreover, the closed unit ball of $X \widetilde{\otimes}_{\pi_s} \overset{n}{\cdots} \, \widetilde{\otimes}_{\pi_s} X$ is the absolutely closed convex hull of $B_{X}\otimes \overset{n}{\cdots} \otimes B_{X} := \{ x \otimes \cdots \otimes x\colon x \in B_{X}\}$, see again \cite[Proposition 16.23]{DeGaMaSe}.

\begin{proof}
Let $\eps >0$ and $P \in \mathcal{P}(^nX;Y)$ be given. Consider the linear operator $\widetilde{P} \in \Lin (X \widetilde{\otimes}_{\pi_s} \overset{n}{\cdots} \, \widetilde{\otimes}_{\pi_s} X, Y)$ corresponding to $P$ as explained above. Applying Corollary \ref{corollary:Gamma-RNP-dense} to $\Gamma := B_X \otimes \overset{n}{\cdots} \otimes B_{X}$, we may find $\widetilde{Q} \in \QNA (X \widetilde{\otimes}_{\pi_s} \overset{n}{\cdots} \, \widetilde{\otimes}_{\pi_s} X, Y)$, a sequence $(x_m \otimes \cdots \otimes x_m) \subset \Gamma$ and a point $u \in \| \widetilde{Q} \| S_Y$ such that $\| \widetilde{Q} - \widetilde{P} \| < \eps$ and $\widetilde{Q} (x_m \otimes \cdots \otimes x_m) \longrightarrow u$. Considering the polynomial $Q \in \mathcal{P}(^nX;Y)$ induced by $\widetilde{Q}$, we have that $Qx_m \longrightarrow u$ with $\|u \| = \| \widetilde{Q} \| = \| Q \|$ and that $\|Q - P\| < \eps$, finishing the proof.
\end{proof}

\section{On the set of quasi norm attaining operators}\label{section:reflexivity-finite-dimension}

Given Banach spaces $X$ and $Y$, our goal here is to analyze when any of the inclusion
$$
\NA(X,Y)\subseteq \QNA(X,Y) \subseteq \Lin(X,Y)
$$
can be an equality. For the first inclusion, the equality allows to characterize reflexivity in terms of quasi norm attaining operators.

\begin{prop}\label{prop:reflexive}
	Let $X$ be a Banach space. Then the following statements are equivalent:
	\begin{enumerate}
	\item[\textup{(a)}] $X$ is reflexive.
	\item[\textup{(b)}] $\NA(X,Y) = \QNA (X,Y)$ for every Banach space $Y$.
	\item[\textup{(c)}] There exists a nontrivial Banach space $Y$ such that $\NA(X,Y) = \QNA (X,Y)$.
\end{enumerate}
\end{prop}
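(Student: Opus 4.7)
The plan is to prove the circular chain (a)$\Rightarrow$(b)$\Rightarrow$(c)$\Rightarrow$(a), where (b)$\Rightarrow$(c) is immediate (take any $Y$ with $\dim Y \geq 1$), so the content is in (a)$\Rightarrow$(b) and the contrapositive of (c)$\Rightarrow$(a).

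For (a)$\Rightarrow$(b), I would use weak compactness of $B_X$. Given $T\in \QNA(X,Y)$, pick $(x_n)\subseteq S_X$ with $Tx_n \longrightarrow u$ where $\|u\|=\|T\|$. Since $X$ is reflexive, $B_X$ is weakly compact, so after passing to a subsequence we may assume $x_n \longrightarrow x_0$ weakly for some $x_0\in B_X$. Because every bounded linear operator is weak-to-weak continuous, $Tx_n \longrightarrow Tx_0$ weakly, and combined with the norm convergence $Tx_n \longrightarrow u$, the uniqueness of weak limits forces $Tx_0=u$. Then
\[
\|T\|=\|u\|=\|Tx_0\|\leq \|T\|\,\|x_0\|\leq \|T\|,
\]
so $\|x_0\|=1$ and $T$ attains its norm at $x_0$. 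Thus $\QNA(X,Y)\subseteq \NA(X,Y)$, and the reverse inclusion is Remark~\ref{basic_remark}.(a).

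For (c)$\Rightarrow$(a), I would prove the contrapositive: if $X$ is not reflexive, then for every nontrivial Banach space $Y$ one can produce a rank-one operator in $\QNA(X,Y)\setminus \NA(X,Y)$. The key input is James' theorem, which provides some $x^*\in S_{X^*}$ that does not attain its norm on $B_X$. Fix an arbitrary $y_0\in S_Y$ and define
\[
T\colon X\longrightarrow Y,\qquad Tx:=x^*(x)\,y_0.
\]
Clearly $\|T\|=\|x^*\|=1$, and $\|Tx\|=|x^*(x)|<1$ for every $x\in S_X$, so $T\notin \NA(X,Y)$. On the other hand, take $(x_n)\subseteq S_X$ with $|x^*(x_n)|\to 1$; by compactness of $\T$ we may pass to a subsequence along which $x^*(x_n)\longrightarrow \lambda$ for some $\lambda\in \T$. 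Then $Tx_n \longrightarrow \lambda y_0$ and $\|\lambda y_0\|=1=\|T\|$, so $T\in \QNA(X,Y)$. This shows $\NA(X,Y)\subsetneq \QNA(X,Y)$ for every nontrivial $Y$, contradicting (c).

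I do not expect any serious obstacle: the argument is essentially a dichotomy between weak compactness of $B_X$ (which collapses quasi attainment to attainment via weak-to-weak continuity) and James' theorem (which produces a non-norm-attaining functional that, however, trivially quasi attains its norm and can be promoted to any nontrivial range via a rank-one factorization). The only mild point to be careful about is that in the complex case one must rotate along a convergent subsequence in $\T$ rather than just assume direct convergence of $x^*(x_n)$ to $\|x^*\|$.
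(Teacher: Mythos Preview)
Your proof is correct and follows essentially the same route as the paper: weak sequential compactness of $B_X$ for (a)$\Rightarrow$(b), and a rank-one operator $x^*\otimes y_0$ together with James' theorem for (c)$\Rightarrow$(a). The only cosmetic difference is that the paper verifies $T=x^*\otimes y_0\in\QNA(X,Y)$ by citing Remark~\ref{basic_remark}.(b) (rank-one operators are compact), whereas you do it by hand via a convergent subsequence of $(x^*(x_n))$; both are fine.
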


\begin{proof} (a)$\Rightarrow$(b). Suppose that $X$ is reflexive and
	let $T \in \QNA (X,Y)$ be given. Let $(x_n) \subseteq S_X$ be a sequence such that $Tx_n \longrightarrow u$ for some $u \in Y$ with $\|u\| = \|T\|$. Since $X$ is reflexive, there is a subsequence, denoted again by $(x_n)$, such that $x_n \longrightarrow x_0$ weakly for some $x_0 \in B_X$. Thus $T x_n \longrightarrow T x_0$ weakly and $Tx_0$ must coincide with $u$, getting that $\|Tx_0\|=\|u\|=\|T\|$ and so $T\in \NA(X,Y)$.
	
	Since (b)$\Rightarrow$(c) is clear, it remains to show that (c)$\Rightarrow$(a). Fix any $x^*\in X^*$ and $y_0\in S_Y$, and consider the rank-one operator $T=x^*\otimes y_0$ given by $T(x):=x^*(x)y_0$. It is clear that $\|T\|=\|x^*\|$ and that $T\in \NA(X,Y)$ if and only if $x^*\in \NA(X,\KK)$. On the other hand, as rank-one operators are compact, $T\in \QNA(X,Y)$ by Remark~\ref{basic_remark}.(b). Now, if $\NA(X,Y) = \QNA (X,Y)$, it follows that $\NA(X,\KK)=X^*$, and this implies that $X$ is reflexive by James' theorem (see \cite[Corollary~3.131]{FHHMZ} for instance).
\end{proof}

The version for range spaces of the above result does not give any interesting characterization.

\begin{remark}
{\slshape There is no nontrivial Banach space $Y$ such that $\NA(X,Y)=\QNA(X,Y)$ for every Banach space $X$.}\newline Indeed, just consider a non-reflexive Banach space $X$ and suppose that $\NA(X,Y)=\QNA(X,Y)$. Then, Proposition~\ref{prop:reflexive} gives the contradiction.
\end{remark}

Our next aim is to discuss the equality in the inclusion $\QNA(X,Y)=\Lin(X,Y)$ to show that finite-dimensionality can also be described in terms of the set of quasi norm attaining operators: every infinite-dimensional Banach space can be the domain or the range of bounded linear operators which does not quasi attain their norms. The case of the domain space is easier and it is based on \cite[Lemma~2.2]{M-M-P}.

\begin{prop}\label{prop_X_c0}
	If $X$ is an infinite dimensional Banach space, then there is $T\in \Lin (X,c_0)$ which does not belong to $\QNA(X, c_0)$.
\end{prop}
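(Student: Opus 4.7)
The strategy is to build an operator $T\colon X\longrightarrow c_0$ whose norm is not attained on the closure of the image of the unit ball. To do this I will exploit a weak$^*$-null sequence in $S_{X^*}$ (which is the content of \cite[Lemma~2.2]{M-M-P}, and ultimately of the Josefson--Nissenzweig theorem, valid because $X$ is infinite-dimensional) together with a ``weight'' sequence strictly bounded by $1$.

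\textbf{Step 1: Setup.} Using \cite[Lemma~2.2]{M-M-P} (or Josefson--Nissenzweig), fix a sequence $(x_n^*)\subseteq S_{X^*}$ with $x_n^*(x)\longrightarrow 0$ for every $x\in X$. Choose a scalar sequence $(\alpha_n)\subseteq (0,1)$ with $\alpha_n\nearrow 1$ (e.g.\ $\alpha_n=1-1/n$). For each $x\in X$ set
\[
Tx:=\bigl(\alpha_n\,x_n^*(x)\bigr)_{n\in\N}.
\]
Since $x_n^*(x)\longrightarrow 0$, the sequence $Tx$ lies in $c_0$, and $\|Tx\|_\infty\leq \|x\|$, so $T\in\Lin(X,c_0)$ with $\|T\|\leq 1$.

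\textbf{Step 2: Compute the norm.} For every $n\in\N$ and every $\eps>0$ there is $x\in S_X$ with $|x_n^*(x)|>1-\eps$, whence $\|Tx\|_\infty\geq \alpha_n(1-\eps)$. Letting $\eps\to 0$ and $n\to\infty$ gives $\|T\|\geq \sup_n\alpha_n=1$. Thus $\|T\|=1$.

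\textbf{Step 3: $T\notin\QNA(X,c_0)$.} Assume towards a contradiction that there exist $(y_k)\subseteq B_X$ and $u\in c_0$ with $\|u\|_\infty=1$ such that $Ty_k\longrightarrow u$ in $c_0$. Because $u\in c_0$ and $\|u\|_\infty=1$, the supremum is attained at some coordinate: there is $n_0\in\N$ with $|u_{n_0}|=1$. Convergence in $c_0$ forces coordinatewise convergence, so
\[
\alpha_{n_0}\,x_{n_0}^*(y_k)\longrightarrow u_{n_0}\quad\text{with } |u_{n_0}|=1.
\]
But $|\alpha_{n_0}\,x_{n_0}^*(y_k)|\leq \alpha_{n_0}\,\|x_{n_0}^*\|\,\|y_k\|\leq \alpha_{n_0}<1$ for every $k$, contradicting $|u_{n_0}|=1$.

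The only real obstacle is invoking the existence of the weak$^*$-null sequence of norm-one functionals, which is exactly what the cited lemma (or, directly, Josefson--Nissenzweig) provides in any infinite-dimensional Banach space; once that is in hand, the strictly increasing weights $\alpha_n<1$ do the rest of the work, because any limit point of $T(B_X)$ in $c_0$ would have to attain its sup-norm at a single coordinate $n_0$, where the image is a~priori confined to the disk of radius $\alpha_{n_0}<1$.
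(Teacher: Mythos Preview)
Your proof is correct and follows essentially the same approach as the paper: both use the Josefson--Nissenzweig theorem to obtain a weak$^*$-null sequence $(x_n^*)\subseteq S_{X^*}$, then define $T$ coordinatewise via weights $\alpha_n\nearrow 1$ with $\alpha_n<1$ (the paper takes $\alpha_n=\tfrac{n}{n+1}$), and conclude by observing that any $u\in S_{c_0}$ must achieve $|u_{n_0}|=1$ at some coordinate, which is impossible since the $n_0$-th coordinate of $T(B_X)$ lies in the disk of radius $\alpha_{n_0}<1$. Your write-up even supplies a bit more detail on $\|T\|=1$ than the paper does.
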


\begin{proof}
	By the Josefson-Nissenzweig theorem (see \cite[\S~XII]{Diestel_seq_series}), there exists a weak$^*$ null sequence $(x_n^*) \subseteq S_{X^*}$. Define an operator $T \colon X \longrightarrow c_0$ by
	\[
T(x)=\left(\frac{m}{m+1} x_m^* (x)\right)_{m\in\N} \qquad \text{for } x \in X.
	\]
	It is clear that $\|T \| =1$. Assume that there exist a sequence $(x_n) \subseteq S_X$ and a vector $u \in S_{c_0}$ such that $T x_n \longrightarrow u = (u_m)_{m\in\N}$. Then, for each $m \in \N$,
	\[
	\frac{m}{m+1} x_m^* (x_n) \longrightarrow u_m
	\]
	as $n \longrightarrow \infty$. Therefore,
	\[
	|u_m| = \lim_{n \rightarrow \infty} \left|\frac{m}{m+1} x_m^* (x_n)\right| \leq \frac{m}{m+1} < 1
	\]
	for every $m \in \N$. If we take $m_0 \in \N$ such that $|u_{m_0}| = \| u \| =1$ (which exists as $u\in c_0$), we get a contradiction.
\end{proof}

The case of the range space is analogous, but the proof is rather more involved.

\begin{prop}\label{prop_ell1_Y}
Let $Y$ be an infinite dimensional Banach space. Then there exists $T\in \Lin (\ell_1,Y)$ which does not belong to $\QNA(\ell_1, Y)$.
\end{prop}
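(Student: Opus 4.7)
The plan is to exhibit a concrete operator $T \in \Lin(\ell_1,Y)$ of norm one whose coefficient functionals detect any approximate maximising sequence too rigidly. First, invoke Mazur's basic sequence theorem (here is where the infinite dimensionality of $Y$ is used) to pick a normalised basic sequence $(y_n)\subseteq S_Y$, set $Z:=\overline{\spann}\{y_n\}$, and consider the associated coefficient functionals $(y_n^*)$, which are bounded elements of $Z^*$ and may be extended to $Y^*$ by Hahn--Banach (we keep the same notation), with $y_n^*(y_m)=\delta_{nm}$. Then define $T\in \Lin(\ell_1,Y)$ on the canonical basis by
\[
Te_n:=\alpha_n\,y_n, \qquad \alpha_n:=1-\tfrac{1}{n+1}.
\]
For every $x=\sum_k a_k e_k\in B_{\ell_1}$ one has $\|Tx\|\leq \sum_k |a_k|\alpha_k\leq 1$, while $\|Te_n\|=\alpha_n\to 1$; hence $\|T\|=1$.

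Suppose, toward a contradiction, that $T\in\QNA(\ell_1,Y)$, with $(x_n)\subseteq S_{\ell_1}$ satisfying $Tx_n\longrightarrow u$ for some $u\in S_Y$. Writing $x_n=\sum_k a_k^{(n)} e_k$, continuity of each $y_m^*$ yields
\[
a_m^{(n)}\alpha_m = y_m^*(Tx_n) \longrightarrow y_m^*(u) \qquad \text{as } n\to \infty
\]
for every fixed $m$, so the pointwise limits $b_m:=\lim_n a_m^{(n)}=y_m^*(u)/\alpha_m$ exist. Fatou's lemma applied to the counting measure on $\N$ gives
\[
\sum_m |b_m|\leq \liminf_n \sum_m |a_m^{(n)}| = 1,
\]
so $x:=\sum_m b_m e_m\in B_{\ell_1}$ and $Tx=\sum_m b_m\alpha_m y_m\in Z$ is well defined.

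To close the argument, I would identify the limit point $u$ with $Tx$: since $Te_n\in Z$ and $Z$ is closed, $u\in Z$, and because $(y_n)$ is a Schauder basis of $Z$, the unique expansion $u=\sum_m y_m^*(u)\,y_m=\sum_m b_m\alpha_m y_m=Tx$ holds. Hence $1=\|u\|=\|Tx\|\leq \sum_m |b_m|\alpha_m\leq \sum_m |b_m|\leq 1$, which forces equality throughout and therefore $\sum_m |b_m|(1-\alpha_m)=0$. Since $1-\alpha_m>0$ for every $m$, this yields $b_m=0$ for all $m$, hence $u=Tx=0$, contradicting $\|u\|=1$. The main technical point is precisely the identification $u=Tx$: the pointwise limits $b_m$ do not a priori force $x_n\to x$ in $\ell_1$-norm, and the equality is recovered only by combining $u\in Z$ with the uniqueness of the basis expansion in $Z$. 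This is the reason for starting with a basic sequence (rather than an arbitrary separated normalised sequence) in $S_Y$, which Mazur's theorem supplies whenever $\dim Y=\infty$.
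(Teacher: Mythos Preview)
Your proof is correct, and it takes a genuinely different route from the paper's.

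The paper splits into two cases according to whether $Y$ has the Schur property. If not, it picks a weakly null normalised sequence $(y_n)\subseteq S_Y$, defines $T$ as you do, and derives a contradiction by testing against a single functional $u^*\in S_{Y^*}$ with $|u^*(u)|=1$ (using that $u^*(y_n)\to 0$). If $Y$ does have the Schur property, the paper invokes Rosenthal's $\ell_1$ theorem to embed $\ell_1$ in $Y$, builds $T$ as a monomorphism, and then appeals to Lemma~\ref{lem_paya} to reduce $\QNA$ to $\NA$, which is easily ruled out.

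Your argument is unified and avoids both the case split and the appeal to Lemma~\ref{lem_paya}: Mazur's theorem supplies a normalised basic sequence in any infinite-dimensional $Y$, and the biorthogonal functionals together with Fatou's lemma let you identify the limit $u$ with $Tx$ for some $x\in B_{\ell_1}$, forcing $\sum_m |b_m|(1-\alpha_m)=0$ and hence $u=0$. This is more elementary (no Schur/Rosenthal machinery) and conceptually cleaner; the price is that the key identification $u=Tx$ requires the Schauder-basis structure of $Z$, which is exactly the point you flag. The paper's approach, by contrast, buys a connection to Lemma~\ref{lem_paya} in the Schur case, which fits the paper's broader narrative about monomorphisms.
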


\begin{proof}
	We divide the proof into two cases.

	\emph{Case 1}: Assume first that $Y$ doesn't have the Schur property. Then, it is well-known that there is a sequence $(y_n) \subseteq S_Y$ such that $y_n \xrightarrow[]{\ w\ } 0$ (see \cite[Exercise~XII.2]{Diestel_seq_series}, for instance). Choose an increasing sequence $(t_n)$ with $0<t_n<1$ for each $n \in \N$ such that $\lim_n t_n=1$. Consider an operator $T \colon \ell_1 \longrightarrow Y$ defined as
	\[
	T(x) = \sum_{n=1}^\infty t_n x(n) y_n \qquad \text{for each } x\colon \N\longrightarrow \KK \text{ in }\ell_1.
	\]
	It is clear that $\|T \| = 1$. Assume $T$ belongs to $\QNA(\ell_1, Y)$, then there exist a sequence $(x_{m}) \subseteq S_{\ell_1}$ and a vector $u \in S_Y$ such that $\lim_m Tx_{m} = u$. Choose $u^* \in S_{Y^*}$ so that $|u^* (u) | =1$. We have that
	\begin{equation}\label{eq_ell1_Y}
	u^* (T x_{m}) = \sum_{n=1}^\infty t_n x_m(n) u^* (y_n) \longrightarrow u^* (u) \qquad \text{as } m \longrightarrow \infty.
	\end{equation}
	As $y_n \xrightarrow[]{\ w\ } 0$, we can choose $n_0 \in \N$ so large that $|u^* (y_n)| < 1/2$ for all $n > n_0$ and $t_{n_0} \geq 1/2$. It follows that
	\[
	\left| \sum_{n=1}^\infty t_n x_m(n) u^* (y_n) \right| \leq t_{n_0} \sum_{n=1}^{n_0} |x_m(n)| + \frac{1}{2} \sum_{n= n_0 + 1}^{\infty} |x_m(n)| \leq t_{n_0} < 1
	\]
	for every $m \in \N$, which contradicts \eqref{eq_ell1_Y}. Let us comment that the above argument is inspired in the proof of \cite[Remark~3]{MartinRao}.
	
	\emph{Case 2}: Now suppose that $Y$ has the Schur property. It follows from Rosenthal's $\ell_1$ theorem that $Y$ contains a subspace which is isomorphic to $\ell_1$ (see \cite[Exercise~XI.3]{Diestel_seq_series} for instance), that is, there exists a monomorphism $Q \colon \ell_1 \longrightarrow Y$, so $\ker Q=\{0\}$ and $Q(\ell_1)$ is closed. Write $u_n = Q(e_n)$ for every $n \in \N$, then there is $C>0$ such that $C<\| u_n \| \leq \|Q\|$ for every $n \in \N$. Take an increasing sequence $(t_n)$ with $1/2 < t_n < 1$ for every $n \in \N$ such that $\lim_n t_n=1$.
	Consider the operator $T \colon \ell_1 \longrightarrow Q(\ell_1) \subseteq Y$ defined by
	\[
	T(x) = \sum_{n=1}^{\infty} t_n x(n) \frac{u_n}{\|u_n\|} \qquad \text{for each } x\colon \N\longrightarrow \KK \text{ in }\ell_1.
	\]
	It is clear that $\|T \| =1$, $\ker T=\ker Q=\{0\}$, and $T(\ell_1)=Q(\ell_1)$. Therefore, $T$ is a monomorphism.
	If $T \in \QNA(\ell_1, Y)$, then Lemma~\ref{lem_paya} implies that $T \in \NA(\ell_1, Y)$. However, if $\|Tx\|=1$ for some $x \in S_{\ell_1}$, then
	\[
	1 = \|Tx\| \leq \sum_{n=1}^\infty |t_n||x(n)| < \sum_{n=1}^\infty |x(n)| =1,
	\]
	which is a contradiction. Thus, we conclude that $T \notin \QNA(\ell_1, Y)$ as desired.	
\end{proof}

From Propositions \ref{prop_X_c0} and \ref{prop_ell1_Y}, we obtain the following characterization of finite-dimensionality in terms of quasi norm attaining operators.

\begin{corollary}\label{corollary:finite-dimension}
let $Z$ be a Banach space. Then, the following assertions are equivalent:
\begin{enumerate}
	\item[\textup{(a)}] $Z$ is finite-dimensional.
	\item[\textup{(b)}] $\QNA(Z,Y) = \Lin(Z,Y)$ for every Banach space $Y$.
	\item[\textup{(c)}] $\QNA(Z,c_0) = \Lin(Z,c_0)$.
	\item[\textup{(d)}] $\QNA(X,Z) = \Lin(X,Z)$ for every Banach space $X$.
	\item[\textup{(e)}] $\QNA(\ell_1,Z) = \Lin(\ell_1,Z)$.
\end{enumerate}
\end{corollary}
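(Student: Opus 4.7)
My plan is to establish the chain of implications (a)$\Rightarrow$(b)$\Rightarrow$(c)$\Rightarrow$(a) and (a)$\Rightarrow$(d)$\Rightarrow$(e)$\Rightarrow$(a), which will close both loops. The two trivial implications are (b)$\Rightarrow$(c) and (d)$\Rightarrow$(e), since $c_0$ and $\ell_1$ are particular Banach spaces. The two hardest implications are actually already done: (c)$\Rightarrow$(a) is just the contrapositive of Proposition~\ref{prop_X_c0}, and (e)$\Rightarrow$(a) is the contrapositive of Proposition~\ref{prop_ell1_Y}. So the only thing to verify is that finite-dimensionality of $Z$ forces every operator in $\Lin(Z,Y)$ and every operator in $\Lin(X,Z)$ to quasi attain its norm.

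For (a)$\Rightarrow$(b), I would use compactness of $B_Z$ directly. Given any $T\in \Lin(Z,Y)$, pick a sequence $(x_n)\subseteq S_Z$ with $\|Tx_n\|\longrightarrow \|T\|$; by finite dimensionality, a subsequence $(x_{n_k})$ converges to some $x_0\in S_Z$, and then $Tx_{n_k}\longrightarrow Tx_0$ with $\|Tx_0\|=\|T\|$. Thus $T\in \NA(Z,Y)\subseteq \QNA(Z,Y)$ by Remark~\ref{basic_remark}.(a). In other words, when the domain is finite-dimensional, every operator is even norm-attaining in the classical sense.

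For (a)$\Rightarrow$(d), the argument is essentially the same but working in the range: given $T\in \Lin(X,Z)$, choose $(x_n)\subseteq B_X$ with $\|Tx_n\|\longrightarrow \|T\|$. The sequence $(Tx_n)$ is bounded in the finite-dimensional space $Z$, so by the Bolzano–Weierstrass theorem some subsequence $(Tx_{n_k})$ converges to a point $u\in Z$, which automatically lies in $\overline{T(B_X)}$ and satisfies $\|u\|=\|T\|$. Hence $T\in \QNA(X,Z)$. Note that here we genuinely need quasi norm attainment rather than norm attainment, which reflects precisely why the definition is useful: $T$ need not attain its norm at a specific point of $B_X$ (consider for instance a non-norm-attaining functional on a non-reflexive space), but the finite-dimensional range ensures the existence of the limit point $u$.

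The only step requiring any care is verifying that the implications (c)$\Rightarrow$(a) and (e)$\Rightarrow$(a) are correctly obtained from the propositions: if $Z$ were infinite-dimensional, Proposition~\ref{prop_X_c0} applied with $X=Z$ produces a $T\in \Lin(Z,c_0)\setminus \QNA(Z,c_0)$, contradicting (c); and Proposition~\ref{prop_ell1_Y} applied with $Y=Z$ produces a $T\in \Lin(\ell_1,Z)\setminus \QNA(\ell_1,Z)$, contradicting (e). No real obstacle remains—this corollary is a packaging of the two propositions together with the elementary direct implications from finite-dimensionality.
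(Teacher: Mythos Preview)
Your proof is correct and follows essentially the same route as the paper: the paper also establishes (a)$\Rightarrow$(b),(d) via compactness (citing Remark~\ref{basic_remark}, since finite-dimensionality of $Z$ makes every operator compact), notes (b)$\Rightarrow$(c) and (d)$\Rightarrow$(e) are immediate, and obtains (c)$\Rightarrow$(a) and (e)$\Rightarrow$(a) as contrapositives of Propositions~\ref{prop_X_c0} and~\ref{prop_ell1_Y}. The only cosmetic difference is that you spell out the compactness arguments directly (and for (a)$\Rightarrow$(b) you even get $T\in\NA(Z,Y)$, which is slightly more than needed).
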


\begin{proof}
The fact that (a) implies the rest of assertions follows from Remark~\ref{basic_remark}; (b) $\Rightarrow$ (c) and (d) $\Rightarrow$ (e) are immediate. Finally, (c) $\Rightarrow$ (a) and (e) $\Rightarrow$ (a) are consequences of Propositions~\ref{prop_X_c0} and \ref{prop_ell1_Y}, respectively.
\end{proof}

Some remarks on the previous result are pertinent. Observe that if $T\in \Lin(X,Y)\setminus \QNA(X,Y)$ has norm one, then the set $K=\overline{T(B_X)}$ is contained in the open unit ball of $Y$ but $\sup_{y \in K} \|y\|=1$. This phenomena has a relation with the so-called remotality. A bounded subset $E$ of a Banach space $X$ is said to be \emph{remotal from $x\in X$} if there is $e_x\in E$ such that $\|x-e_x\|=\sup\{\|x-e\|\colon e\in E\}$, and $E$ is said to be \emph{remotal} if it is remotal from all elements in $X$ (see \cite{Baronti-Papini} for background). Notice that up to translating and re-scaling, the existence of a non-remotal subset of a Banach space is equivalent to the existence of a subset $E$ of the open unit ball such that $\sup\{\|e\|\colon e\in E\}=1$. The existence of non-remotal (closed) sets in every infinite-dimensional Banach space is easy to prove (see \cite[Remark~3.2]{Baronti-Papini}). But it seems that the existence of closed \emph{convex} non-remotal subsets in every infinite-dimensional Banach space was an open problem until 2009-2010, when it was proved independently in two different papers \cite[Theorem~7]{MartinRao} and \cite[Proposition~2.1]{Vesely}. Furthermore, in 2011, a new and easier proof was given in \cite{Kraus2011}. Observe that Proposition~\ref{prop_ell1_Y} gives a new proof of this fact: indeed, if $Y$ is infinite-dimensional, there exists $T\in \Lin(\ell_1,Y)\setminus \QNA(\ell_1,Y)$, and we may suppose
that $\|T\|=1$, so $K=\overline{T(B_{\ell_1})}$ is contained in the open unit ball of $Y$ and $\sup\{\|y\|\colon y\in K\}=1$. Moreover, the non-remotal set $K$ given by this result is not only closed and convex but closed and absolutely convex (i.e.\ convex and equilibrated). Having a look to the previous proofs, it is not difficult to adapt the ones in \cite{MartinRao} and \cite{Vesely} to get an absolutely convex set in the real case, while the one in \cite{Kraus2011} does not give absolute convexity even in the real case. For the complex case, the proof of \cite{Vesely} is not adaptable at all, while the one of \cite{MartinRao} seems to be. But the proof of Proposition~\ref{prop_ell1_Y} is simpler than the one of \cite{MartinRao}. In any case, as a by-product of our study of quasi norm attaining operators, we get the following corollary.

\begin{corollary}\label{coro:remotality}
Let $Y$ be a \textup{(}real or complex\textup{)} infinite-dimensional Banach space. Then there exists a closed, absolutely convex subset of $B_Y$ which is not remotal from $0$.
\end{corollary}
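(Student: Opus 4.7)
The plan is to read off the conclusion essentially verbatim from the discussion preceding the corollary, which already sketches the argument. Given an infinite-dimensional Banach space $Y$, I would invoke Proposition~\ref{prop_ell1_Y} to produce some $T \in \Lin(\ell_1,Y) \setminus \QNA(\ell_1,Y)$, and after dividing by $\|T\|$ (which is nonzero, since the zero operator trivially lies in $\QNA$), I may assume $\|T\|=1$. I then set $K := \overline{T(B_{\ell_1})} \subseteq Y$ and claim that $K$ is the desired subset of $B_Y$.

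The verifications are all immediate. First, $K$ is closed by construction, and $K \subseteq B_Y$ because $\|T\|=1$ forces $T(B_{\ell_1}) \subseteq B_Y$ and $B_Y$ is closed. Second, $K$ is absolutely convex: the set $B_{\ell_1}$ is absolutely convex, linearity of $T$ transfers this property to $T(B_{\ell_1})$, and taking closure preserves both convexity and balancedness (i.e., invariance under multiplication by scalars $\theta \in \T$, which is a continuous operation). Third, $\sup_{y \in K} \|y\| = \|T\| = 1$ by definition of the operator norm. Finally, the assumption $T \notin \QNA(\ell_1,Y)$ says precisely that $\overline{T(B_{\ell_1})} \cap \|T\| S_Y = \emptyset$, that is, $K \cap S_Y = \emptyset$; combined with $K \subseteq B_Y$, this means that every $y \in K$ satisfies $\|y\| < 1$.

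Putting these together, $\sup\{\|0 - y\| : y \in K\} = \sup\{\|y\| : y \in K\} = 1$, but no element of $K$ achieves this supremum, so $K$ is not remotal from $0$. I do not see any genuine obstacle here: the result is a direct repackaging of Proposition~\ref{prop_ell1_Y} through the dictionary between nondenseness of norm values in $\overline{T(B_X)}$ and nonremotality of that set from the origin, as explicitly pointed out in the paragraph preceding the corollary. The only minor subtlety worth writing down carefully is the preservation of absolute convexity under closure, which follows from continuity of scalar multiplication.
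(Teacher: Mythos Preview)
Your proposal is correct and follows exactly the argument the paper gives in the paragraph immediately preceding the corollary: invoke Proposition~\ref{prop_ell1_Y} to obtain a norm-one $T\in \Lin(\ell_1,Y)\setminus \QNA(\ell_1,Y)$ and take $K=\overline{T(B_{\ell_1})}$. Your added verifications (closedness, absolute convexity under closure, $\sup\|y\|=1$ not attained) are all straightforward and match the paper's reasoning.
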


\section{Further results and examples}\label{sect:further}
Our aim in this section is to provide a more extensive study of two results previously stated: first, the relationship between the quasi norm attainment and the norm attainment of the adjoint operator (discussing extensions of Proposition~\ref{prop:QNAimpliesNAadjoint}) and, second, the possible extensions of Lemma~\ref{lem_paya} on conditions assuring that quasi norm attainment implies (classical) norm attainment.

We begin with the relationship among the adjoint operators. Our first result is a characterization of quasi norm attaining weakly compact operators in terms of the adjoint and biadjoint. In particular, it shows that Proposition~\ref{prop:QNAimpliesNAadjoint} is a characterization in this case. Before stating the result, notice from Remark~\ref{basic_remark}.(b) that the inclusion $\K(X,Y) \subseteq \QNA(X,Y)$ holds for all Banach spaces $X$ and $Y$ while this does not remain true when we replace $\K(X,Y)$ by $\W(X,Y)$; just recall that for every reflexive space $X$, and Proposition~\ref{prop_X_c0} provides an example of $T\in \W(X,c_0)=\Lin(X,c_0)$ which does not belong to $\QNA(X,c_0)$.

\begin{prop}\label{prop_W_QNA} Let $X$ and $Y$ be Banach spaces, and $T \in \W(X,Y)$. Then the following are equivalent.
\begin{enumerate}
\item[\textup{(a)}] $T \in \QNA(X,Y)$.
\item[\textup{(b)}] $T^{*}\in \NA(Y^*,X^{*})$.
\item[\textup{(c)}] $T^{**}\in \NA(X^{**},Y)$.
\end{enumerate}
\end{prop}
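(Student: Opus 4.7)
The implication (a)$\Rightarrow$(b) is already Proposition~\ref{prop:QNAimpliesNAadjoint} and uses no weak compactness. For (b)$\Rightarrow$(c), my plan is to pick $y_0^* \in S_{Y^*}$ at which $T^*$ attains its norm, so $\|T^*y_0^*\| = \|T^*\| = \|T\|$. Since $T^*y_0^*$ lies in the Banach space $X^*$, an application of the Hahn--Banach theorem inside $(X^*)^* = X^{**}$ produces $x_0^{**} \in S_{X^{**}}$ with $x_0^{**}(T^*y_0^*) = \|T^*y_0^*\| = \|T\|$. The duality identity $(T^{**}x_0^{**})(y_0^*) = x_0^{**}(T^*y_0^*)$ together with $\|y_0^*\|=1$ then forces $\|T^{**}x_0^{**}\| \geq \|T\| = \|T^{**}\|$, so equality holds. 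At this point I invoke the weak compactness of $T$, which (via the Gantmacher-type characterization) guarantees $T^{**}(X^{**}) \subseteq Y$, so the vector $u := T^{**}x_0^{**}$ lies in $Y$ and $T^{**}$ really attains its norm as an operator in $\Lin(X^{**},Y)$.

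For (c)$\Rightarrow$(a), pick $x_0^{**} \in S_{X^{**}}$ with $\|T^{**}x_0^{**}\| = \|T\|$ and set $u := T^{**}x_0^{**} \in Y$ (again by weak compactness). Goldstine's theorem supplies a net $(x_\lambda) \subseteq B_X$ with $x_\lambda \longrightarrow x_0^{**}$ in the weak$^*$ topology of $X^{**}$, and the weak$^*$-to-weak$^*$ continuity of $T^{**}$ gives $Tx_\lambda \longrightarrow u$ in the weak$^*$ topology of $Y^{**}$; since the limit $u$ belongs to $Y$, this is just weak convergence in $Y$. Mazur's theorem then places $u$ in the norm-closure of $\conv\{Tx_\lambda\} \subseteq T(B_X)$, so some sequence $(x_n) \subseteq B_X$ satisfies $\|Tx_n - u\| \longrightarrow 0$. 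The inequalities $\|Tx_n\| \leq \|T\|\,\|x_n\| \leq \|T\|$ combined with $\|Tx_n\| \longrightarrow \|u\| = \|T\|$ force $\|x_n\| \longrightarrow 1$; after renormalizing $\tilde x_n := x_n/\|x_n\|$ for large $n$, the sequence $(\tilde x_n) \subseteq S_X$ satisfies $T\tilde x_n \longrightarrow u$, witnessing that $T \in \QNA(X,Y)$ quasi attaining its norm toward $u$.

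The main delicate point is the passage from weak to norm convergence in (c)$\Rightarrow$(a): Goldstine alone delivers only a weak$^*$ approximation of $x_0^{**}$, and hence only weak convergence of $Tx_\lambda$ to $u$, and converting this into a genuine norm-convergent sequence in $T(B_X)$ essentially requires the convexity of $B_X$ via Mazur together with the weak compactness of $T$ (so that the limit point $u$ belongs to $Y$, where norms can be compared honestly). Weak compactness therefore enters the proof in two places: to make statement (c) meaningful as a norm-attainment in $\Lin(X^{**},Y)$, and to keep the Mazur approximation inside $Y$ rather than inside $Y^{**}$.
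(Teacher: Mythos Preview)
Your proof is correct and follows essentially the same route as the paper's: (a)$\Rightarrow$(b) via Proposition~\ref{prop:QNAimpliesNAadjoint}, (b)$\Rightarrow$(c) by the standard ``adjoint of a norm-attaining operator attains its norm'' argument (which the paper calls immediate), and (c)$\Rightarrow$(a) via Goldstine, weak$^*$--weak$^*$ continuity of $T^{**}$, and Mazur. The only cosmetic difference is that the paper stops at $u\in\overline{T(B_X)}$ (using directly that the weak and norm closures of the convex set $T(B_X)$ coincide), whereas you go on to extract and normalize a sequence in $S_X$; both match the two equivalent formulations in Definition~\ref{def:QNA}.
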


\begin{proof}
(a)$\Rightarrow$(b) follows from Proposition~\ref{prop:QNAimpliesNAadjoint} and (b)$\Rightarrow$(c) is immediate, so it suffices to prove (c)$\Rightarrow$(a). Pick $x_0^{**}\in S_{X^{**}}$ such that $\|T^{**}(x_0^{**})\|=\|T\|$ and consider a net $(x_\lambda)\subseteq B_X$ such that $J_X(x_\lambda) \xrightarrow[]{\ w^*\ } x_0^{**}$, where $J_X\colon X\longrightarrow X^{**}$ denotes the natural isometric inclusion map. Thus, we have that $$T x_\lambda=T^{**}(J_X(x_\lambda)) \xrightarrow[]{\ w^*\ } T^{**}(x_0^{**}).$$ But as $T$ is weakly compact, $T^{**}(x_0^{**})$ belongs to $Y$ so, actually, we have that $$T x_\lambda=T^{**}(J_X(x_\lambda)) \xrightarrow[]{\ w\ } T^{**}(x_0^{**})\in Y$$ and then, $T^{**}(x_0^{**}) \in \overline{T(B_X)}$. Therefore, $\overline{T(B_X)}\cap \|T\|S_Y\neq \emptyset$, that is, $T\in \QNA(X,Y)$.
\end{proof}

This proposition gives an alternative (and probably simpler) proof of the fact presented in Corollary~\ref{coro:weaklycompact} that weakly compact operators can be always approximated by weakly compact quasi norm attaining operators.

\begin{cor}\label{cor_W_QNA}
	Let $X$ and $Y$ be Banach spaces. Then $\overline{\QNA(X,Y)\cap \W(X,Y)}=\W(X,Y)$.
\end{cor}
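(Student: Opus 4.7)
The plan is to combine Proposition \ref{prop_W_QNA} with the classical density theorem of Zizler, which asserts that for any Banach spaces $X$ and $Y$ the set
\[
\{S \in \Lin(X,Y) \colon S^* \in \NA(Y^*, X^*)\}
\]
is dense in $\Lin(X,Y)$. This sidesteps the Bourgain--Stegall machinery used in Corollary~\ref{coro:weaklycompact} and instead leans on a Bishop--Phelps-type argument for the adjoint.

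Given $T \in \W(X,Y)$ and $\varepsilon > 0$, I would first invoke Zizler's theorem to produce $S \in \Lin(X,Y)$ with $\|S - T\| < \varepsilon$ and $S^* \in \NA(Y^*, X^*)$. The key feature to exploit is that the construction underlying Zizler's result proceeds by adding a rank-one (more generally, finite-rank) perturbation to $T$: one selects $y_0^* \in S_{Y^*}$ with $\|T^*y_0^*\|$ almost equal to $\|T\|$, applies Bishop--Phelps in $X^*$ to approximate $T^*y_0^*$ by a norm-attaining functional $f \in X^*$, and then writes $S = T + (f - T^*y_0^*) \otimes y_0$ for a suitable $y_0 \in Y$ (iterating if necessary). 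Since finite-rank operators are trivially weakly compact and $\W(X,Y)$ is a closed subspace of $\Lin(X,Y)$ stable under sums, the perturbed operator $S$ still lies in $\W(X,Y)$.

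Once $S \in \W(X,Y)$ satisfies $S^* \in \NA(Y^*, X^*)$, the implication (b)$\Rightarrow$(a) of Proposition~\ref{prop_W_QNA} yields $S \in \QNA(X,Y)$. Therefore $S \in \QNA(X,Y) \cap \W(X,Y)$ with $\|S - T\| < \varepsilon$, establishing the density stated in the corollary.

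The main (and essentially only) obstacle is verifying that the approximating operator in Zizler's theorem inherits weak compactness from $T$. This is not automatic from an abstract density statement but becomes transparent once one notes that the standard proof produces $S$ as $T$ plus a finite-rank correction of small norm; beyond this point, the argument is a direct application of Proposition~\ref{prop_W_QNA}, which is exactly the simplification advertised in the paragraph preceding the corollary.
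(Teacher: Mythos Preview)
Your approach is correct and is essentially the paper's own: the paper invokes Lindenstrauss's Theorem~1 to approximate $T\in\W(X,Y)$ by weakly compact $S_n$ with $S_n^{**}\in\NA(X^{**},Y)$ (the perturbation being compact by the same inspection of the iterative construction), and then applies (c)$\Rightarrow$(a) of Proposition~\ref{prop_W_QNA}; you do the identical thing with Zizler's theorem and (b)$\Rightarrow$(a). Both routes rest on the same key observation---that the classical constructions perturb $T$ by a compact operator---so the difference is only in which side of Proposition~\ref{prop_W_QNA} gets used.
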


\begin{proof}
	By observing the proof of \cite[Theorem~1]{Lindenstrauss}, we see that every $S\in \W(X,Y)$ can be approximated by a sequence $(S_n)$ of weakly compact operators such that $S_n^{**}\in \NA(X^{**},Y)$. By Proposition~\ref{prop_W_QNA}, each $S_n$ belongs to $\QNA (X,Y)$; hence $S$ belongs to $\overline{\QNA(X,Y)\cap \W(X,Y)}$.
\end{proof}

A sight to Proposition~\ref{prop_W_QNA} may lead us to think that Proposition~\ref{prop:QNAimpliesNAadjoint} is an equivalence, that is, that for all Banach spaces $X$, $Y$, one has that $T\in \QNA(X,Y)$ if (and only if) $T^*\in \NA(Y^*,X^*)$. However, this is not true in general. Indeed, it is known that the set $\{ T \in \Lin(X, Y) \colon T^* \in \NA (Y^*, X^*) \}$ is dense in $\Lin (X,Y)$ for all Banach spaces $X$ and $Y$ \cite{Z}; on the other hand, there exist (many) Banach spaces $X$ and $Y$ for which $\QNA(X,Y)$ is not dense in $\Lin(X,Y)$, see Examples
\ref{example2:QNA_not_dense_G}, \ref{exam:negativeL1C(S)}, \ref{examples:QNAdense-NAnot} or Proposition~\ref{prop:huff}. Our next result is an explicit example of this phenomenon.

\begin{example}\label{eample:adjoint-NA-operator-not-QNA}
{\slshape
Recall the Day's norm $\vertiii{\cdot}$ on $c_0$ (see \cite[Definition II.7.2]{DGZ}) defined as
\[
\vertiii{x} = \sup_n \left\{ \left( \sum_{k=1}^{n} \frac{|x_{\gamma_k}|^2}{4^k} \right)^{\frac{1}{2}} \right\},
\]
where $(x_{\gamma_n})$ is a decreasing rearrangement of $(x_n)$ with respect to the modulus. If we let $Y=(c_0, \vertiii{\cdot})$, it follows from Lemma~\ref{lemma:JFA2014} that the formal identity map $\Id \in \Lin (c_0, Y)$ does not belong to $\QNA (c_0, Y)$ since $\vertiii{\cdot}$ is strictly convex. But $\Id^* \in \Lin(Y^*,\ell_1)$ attains its norm at $z^* = \left( \frac{1}{\sqrt{3}} \cdot \frac{1}{2^n} \right) \in Y^*$.}

Indeed, from the construction of $Y$, one can derive with a few calculations that $\|\Id\| = \frac{1}{\sqrt{3}}$. Observe first that
	\begin{align*}
	\vertiii{z^*}_{Y^*} \geq \bigl| z^* (\underbrace{\sqrt{3}, \ldots, \sqrt{3}}_{N\,\, \text{many terms}}, 0, 0, \ldots ) \bigr| \geq \sum_{j=1}^{N} \frac{1}{2^j} \qquad \text{for each } N \in \N
	\end{align*}
	as $(\underbrace{\sqrt{3}, \ldots, \sqrt{3}}_{N\,\, \text{many terms}}, 0, 0, \ldots ) \in B_Y$. This implies that $\vertiii{z^*} \geq 1$.
	
	On the other hand,	we prove that $\vertiii{ z^*}_{\spann\{e_1,\ldots, e_N\}} \leq 1$ for every $N \in \N$ by an induction argument. It is obvious when $N =1$. For fixed $N \geq 2$, suppose that $\vertiii{z^* }_{\spann\{e_1,\ldots, e_n\}} \leq 1$ for every $1 \leq n \leq N-1$.
	Take $y = (y_1, \ldots, y_N, 0,0, \ldots) \in \spann\{e_1,\ldots,e_N\}$ with $\vertiii{y} \leq 1$ (so, $\|y\|_{\infty} \leq 2$) and denote by $\hat{y} = (\hat{y}_1, \ldots, \hat{y}_N, 0,0,\ldots)$ the decreasing rearrangement of $y$.
It follows that
	\begin{equation}\label{estimation:z*(y)}
	| z^* ( y ) | = \frac{1}{\sqrt{3}} \left| \sum_{j=1}^{N} \frac{y_j}{2^j} \right| \leq \frac{1}{\sqrt{3}} \sum_{j=1}^{N} \frac{\hat{y}_j}{2^j} = \frac{1}{\sqrt{3}} \left( \frac{\hat{y}_1}{2} + \sum_{j=2}^{N} \frac{\hat{y}_j}{2^j} \right).
	\end{equation}

	If $\hat{y}_1 \leq \sqrt{3}$, then we have from \eqref{estimation:z*(y)} that $|z^*(y)| \leq 1$. Suppose that $\sqrt{3} < \hat{y}_1 \leq 2$. Note that $(\hat{y}_2, \ldots, \hat{y}_N, 0,0,\ldots ) \in \spann\{e_1,\ldots, e_{N-1}\}$ and that
	\begin{align*}
	\vertiii{ (\hat{y}_2, \ldots, \hat{y}_N, 0,0,\ldots ) }^2 = \sum_{j=1}^{N-1} \frac{\hat{y}_{j+1}^2}{4^j} = 4 \sum_{j=2}^{N} \frac{\hat{y}_j^2}{4^j}.
	\end{align*}
	Thus,
	\begin{align*}
	\vertiii{y}^2 = \frac{\hat{y}_1^2}{4} + \sum_{j=2}^{N} \frac{\hat{y}_j^2}{4^j} = \frac{\hat{y}_1^2}{4} + \frac{1}{4} \vertiii{ (\hat{y}_2, \ldots, \hat{y}_N, 0,0,\ldots ) }^2 \leq 1
	\end{align*}
	which implies that $\vertiii{ (\hat{y}_2, \ldots, \hat{y}_N, 0,0,\ldots ) } \leq \sqrt{4-\hat{y}_1^2}$. By induction hypothesis, we have that
	\[
	 \frac{1}{\sqrt{3}} \sum_{j=1}^{N-1} \frac{\hat{y}_{j+1}}{2^j} = \frac{1}{\sqrt{3}} \sum_{j=2}^{N} \frac{\hat{y}_{j}}{2^{j-1}} \leq \sqrt{4-\hat{y}_1^2}.
	\]
	Combining this with \eqref{estimation:z*(y)}, we obtain that
	\begin{equation}\label{estimation2:z*(y)}
	|z^* (y)| \leq \frac{\hat{y}_1}{2\sqrt{3}} + \sqrt{1-\frac{\hat{y}_1^2}{4}}.
	\end{equation}
	Consider the function $g(t) = \frac{t}{2\sqrt{3}} + \sqrt{1-\frac{t^2}{4}}$ for $t \in [0,2]$. As $g(t) \leq 1$ for $\sqrt{3} \leq t \leq 2$, we obtain from \eqref{estimation2:z*(y)} that $|z^* (y) | \leq 1$. This finishes the induction process showing that $\vertiii{ z^*}_{\spann\{e_1,\ldots, e_N\}} \leq 1$ for every $N \in \N$, and we can deduce that $\vertiii{z^*} =1$. Finally, $\| \Id^* (z^*) \| = \frac{1}{\sqrt{3}} = \| \Id^* \|$ gives the desired result.
\end{example}

We now want to characterize quasi norm attaining operators in terms of the norm attainment of the adjoint operator.

\begin{prop}\label{prop:equivalenceQNA-adjointNA}
\slshape Let $X$, $Y$ be Banach spaces and let $T\in \Lin(X,Y)$. Then, the following are equivalent:
\begin{itemize}
 \item[\textup{(a)}] $T\in \QNA(X,Y)$,
 \item[\textup{(b)}] $T^*$ attains its norm at some $y^*\in S_{Y^*}$ for which $|y^*|$ attains its supremum on $\overline{T(B_X)}$.
\end{itemize}
\end{prop}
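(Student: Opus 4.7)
The plan is to prove the equivalence essentially by unpacking the definitions, using Proposition~\ref{prop:QNAimpliesNAadjoint} for the forward direction and the elementary identity $\|T^*y^*\| = \sup\{|y^*(y)|\colon y\in \overline{T(B_X)}\}$ for the converse. Both directions should be quite short.

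For (a)$\Rightarrow$(b), I would start with a sequence $(x_n)\subseteq B_X$ such that $Tx_n \longrightarrow u$ for some $u \in \|T\|S_Y$. By Hahn--Banach, pick $y^*\in S_{Y^*}$ with $y^*(u)=\|u\|=\|T\|$. Proposition~\ref{prop:QNAimpliesNAadjoint} immediately gives $\|T^*y^*\|=\|T\|$, so $T^*$ attains its norm at $y^*$. Moreover, for every $y\in \overline{T(B_X)}$ one has $|y^*(y)|\leq \|y\|\leq \|T\|=|y^*(u)|$, and since $u\in \overline{T(B_X)}$, this shows that $|y^*|$ attains its supremum on $\overline{T(B_X)}$ at $u$.

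For (b)$\Rightarrow$(a), I would take $y^*\in S_{Y^*}$ with $\|T^*y^*\|=\|T\|$ together with a point $u_0\in \overline{T(B_X)}$ realizing the supremum of $|y^*|$ on $\overline{T(B_X)}$. The chain of equalities
\[
\|T\| = \|T^*y^*\| = \sup_{x \in B_X}|y^*(Tx)| = \sup_{y\in \overline{T(B_X)}}|y^*(y)| = |y^*(u_0)|,
\]
combined with $\|y^*\|=1$, forces $\|u_0\|\geq |y^*(u_0)|=\|T\|$. Since $\overline{T(B_X)}\subseteq \|T\|\,\overline{B_Y}$ we also have $\|u_0\|\leq \|T\|$, hence $\|u_0\|=\|T\|$ and $u_0\in \overline{T(B_X)}\cap \|T\|S_Y$, which gives $T\in \QNA(X,Y)$.

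I do not anticipate any serious obstacle: the argument is essentially a repackaging of the definitions. The only subtle point worth flagging is that, under (b), the attained value of $|y^*|$ on $\overline{T(B_X)}$ is automatically forced to equal $\|T\|$ through the standard identification of $\|T^*y^*\|$ with a supremum over the image, and it is exactly this equality which guarantees that the maximizer $u_0$ has norm $\|T\|$, thereby producing the quasi norm attainment of $T$.
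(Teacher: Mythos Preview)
Your proof is correct and follows essentially the same approach as the paper's own proof: both directions proceed by unwinding the definitions, with (a)$\Rightarrow$(b) choosing a norming functional at the limit point (the paper redoes the computation of Proposition~\ref{prop:QNAimpliesNAadjoint} inline, while you cite it), and (b)$\Rightarrow$(a) using the identity $\|T^*y^*\|=\sup\{|y^*(y)|:y\in\overline{T(B_X)}\}$ to force the maximizer to lie on $\|T\|S_Y$.
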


\begin{proof}
(a)$\Rightarrow$(b). Consider $(x_n)$ in $B_X$ such that $Tx_n\longrightarrow y_0\in \|T\|S_Y$ and take $y^*\in S_{Y^*}$ such that $y^*(y_0)=\|T\|$. Then
$$
\|T^*y^*\|\geq \bigl|[T^*(y^*)](x_n)\bigr|=|y^*(Tx_n)|\longrightarrow |y^*(y_0)|=\|T\|,
$$
so $\|T^*y^*\|=\|T^*\|$. Besides, the supremum of $|y^*|$ on $\overline{T(B_X)}$ is $\|T\|$ and it is attained at $y_0$.

(b)$\Rightarrow$(a). By hypothesis, there are $y^*\in S_{Y^*}$ and $y_0\in \overline{T(B_X)}$ such that $|y^*(y_0)|=\sup\left\{|y^*(y)|\colon y\in \overline{T(B_X)}\right\}$. Observe that as $\|T^*(y^*)\|=\|T\|$ it follows that the supremum of $|y^*|$ on $\overline{T(B_X)}$ equals $\|T\|$, so we get that $\|y_0\|=\|T\|$ and then $\overline{T(B_X)}\cap \|T\|S_Y\neq \emptyset$, that is, $T\in \QNA(X,Y)$.
\end{proof}

Observe that the condition in Proposition \ref{prop:equivalenceQNA-adjointNA}.(b) is weaker than the one given in Remark~\ref{remark:RNP-rankone-S*attains}.(e).

To finish the study of the norm attainment of the adjoint operator, we include the next straightforward result which shows that the quasi norm attainment and the norm attainment are equivalent for adjoint operators.

\begin{prop}\label{prop:T*-QNA=NA}
Let $X$ and $Y$ be Banach spaces and $T\in \Lin(X,Y)$. If $T^*\in \QNA(Y^*,X^*)$, then $T^*\in \NA(Y^*,X^*)$.
\end{prop}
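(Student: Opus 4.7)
The plan is to exploit the two features that distinguish adjoint operators among all bounded operators: their weak-$^*$-to-weak-$^*$ continuity, and the weak-$^*$ compactness of the unit ball of their domain. Suppose $T^{*}\in \QNA(Y^{*},X^{*})$, so by definition there exist a sequence $(y_{n}^{*})\subseteq S_{Y^{*}}$ and a point $x^{*}\in X^{*}$ with $\|x^{*}\|=\|T^{*}\|=\|T\|$ such that $T^{*}y_{n}^{*}\longrightarrow x^{*}$ in the norm of $X^{*}$. The goal is to produce a single functional in $S_{Y^{*}}$ at which $T^{*}$ attains its norm.

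First I would invoke the Banach--Alaoglu theorem: since $(y_{n}^{*})\subseteq B_{Y^{*}}$ and $B_{Y^{*}}$ is weak-$^{*}$ compact, the sequence admits a weak-$^{*}$ convergent subnet $(y_{n_{\alpha}}^{*})$ with weak-$^{*}$ limit $y_{0}^{*}\in B_{Y^{*}}$. Next, I would use that the adjoint $T^{*}$ is weak-$^{*}$-to-weak-$^{*}$ continuous (a standard fact following directly from $\langle T^{*}y^{*},x\rangle=\langle y^{*},Tx\rangle$ for every $x\in X$), which yields $T^{*}y_{n_{\alpha}}^{*}\xrightarrow{w^{*}} T^{*}y_{0}^{*}$. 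On the other hand, norm convergence $T^{*}y_{n}^{*}\to x^{*}$ passes to the subnet and in particular implies weak-$^{*}$ convergence to $x^{*}$. Uniqueness of weak-$^{*}$ limits in $X^{*}$ gives the identification $T^{*}y_{0}^{*}=x^{*}$, so $\|T^{*}y_{0}^{*}\|=\|x^{*}\|=\|T^{*}\|$.

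It remains to upgrade $y_{0}^{*}\in B_{Y^{*}}$ to $y_{0}^{*}\in S_{Y^{*}}$, which is automatic: from $\|T^{*}\|=\|T^{*}y_{0}^{*}\|\leq \|T^{*}\|\,\|y_{0}^{*}\|$ we deduce $\|y_{0}^{*}\|\geq 1$, while weak-$^{*}$ lower semicontinuity of the norm (or simply $y_{0}^{*}\in B_{Y^{*}}$) forces $\|y_{0}^{*}\|\leq 1$. Hence $\|y_{0}^{*}\|=1$ and $T^{*}\in \NA(Y^{*},X^{*})$.

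There is essentially no obstacle here; the result is a clean consequence of weak-$^{*}$ compactness plus weak-$^{*}$ continuity of adjoints, and the only point worth flagging is that one really does need to pass to a subnet rather than a subsequence, since $Y^{*}$ need not be weak-$^{*}$ sequentially compact when $Y$ is non-separable. The statement should be contrasted with Example~\ref{eample:adjoint-NA-operator-not-QNA}, which shows the asymmetric direction fails: adjoints can attain their norm without the original operator being in $\QNA$, but if one already assumes quasi norm attainment of the adjoint, the above argument shows it is equivalent to honest norm attainment.
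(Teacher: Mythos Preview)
Your proof is correct and follows essentially the same route as the paper's: Banach--Alaoglu to extract a weak-$^*$ cluster point of the sequence $(y_n^*)$, then weak-$^*$-to-weak-$^*$ continuity of $T^*$ to identify $T^*y_0^*$ with the norm limit $x^*$. The paper phrases this via a weak-$^*$ accumulation point rather than a convergent subnet, and omits the normalization step (which is indeed automatic when $T\neq 0$), but the argument is the same.
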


\begin{proof}
Let $(y_n^*) \subseteq B_{Y^*}$ be a sequence such that $T^*y_n^*\longrightarrow x_0^*$ for some $x_0^*\in \|T^*\|S_{X^*}$. As $B_{Y^*}$ is weak$^{*}$ compact, there is a weak$^{*}$ accumulation point $y_0^*$ of $A=\{y_n^*\colon n\in \N\}$. Then $y_0^*\in B_{Y^*}$ and the only possibility is that $T^*y_0^*=x_0^*$ by the $w^{*}$-$w^{*}$-continuity of $T^*$. Hence, $\|T^*y_0^*\|=\|T^*\|$.
\end{proof}

Next, we would like to deal with possible extensions of Lemma~\ref{lem_paya}, looking for sufficient conditions assuring that quasi norm attaining operators are actually norm attaining. We start with the following useful characterization of quasi norm attaining operators.

\begin{lemma}\label{lem_quotient}
Let $X$ and $Y$ be Banach spaces and $T \in \Lin(X,Y)$ be given. Consider the following commutative diagram:
	\[
	\begin{tikzcd}
	X \arrow{r}{T} \arrow[swap]{d}{q} & Y \\
	X/\ker T \arrow[swap]{ur}{\widetilde{T}}
	\end{tikzcd}
	\]
	where $q \colon X \longrightarrow X/\ker T$ is the canonical quotient map and $\widetilde{T} \colon X/\ker T \longrightarrow Y$ is the induced (injective) operator. Then, the following are equivalent.
	\begin{enumerate}
	\item[\textup{(a)}] $T \in \QNA(X,Y)$.
	\item[\textup{(b)}] $\widetilde{T} \in \QNA(X/\ker T, Y)$.
	\end{enumerate}
	If $T(X)$ is closed, then the following statement is also equivalent to the others:
	\begin{enumerate}
	\item[\textup{(c)}] $\widetilde{T} \in \NA(X/\ker T, Y)$.
	\end{enumerate}
\end{lemma}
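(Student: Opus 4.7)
The plan is to start by recording the fundamental identities that link $T$ and $\widetilde{T}$ through the quotient map $q$: every $\tilde{x} \in X/\ker T$ admits representatives in $X$ of norm arbitrarily close to $\|\tilde{x}\|$ (by definition of the quotient norm), and $\widetilde{T}(q(x)) = T(x)$ for every $x \in X$. From these it will follow immediately that $\|\widetilde{T}\| = \|T\|$ and that $q(B_X) \subseteq B_{X/\ker T}$, which are the two facts the whole argument rests upon.

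To prove (a)$\Rightarrow$(b), I would take a sequence $(x_n) \subseteq S_X$ with $Tx_n \longrightarrow u$ and $\|u\|=\|T\|=\|\widetilde{T}\|$ and simply push it through $q$: the vectors $\tilde{x}_n := q(x_n)$ lie in $B_{X/\ker T}$ and satisfy $\widetilde{T}\tilde{x}_n = Tx_n \longrightarrow u$, so $u \in \overline{\widetilde{T}(B_{X/\ker T})} \cap \|\widetilde{T}\|S_Y$. For (b)$\Rightarrow$(a) the direction of the argument is reversed, and the only subtle point is that a representative in $X$ of a unit vector in the quotient need not be a unit vector of $X$. Given $(\tilde{x}_n) \subseteq S_{X/\ker T}$ with $\widetilde{T}\tilde{x}_n \longrightarrow u$ and $\|u\|=\|\widetilde{T}\|$, I would choose representatives $x_n \in X$ with $q(x_n)=\tilde{x}_n$ and $1 = \|\tilde{x}_n\| \leq \|x_n\| \leq 1 + 1/n$, so that $Tx_n = \widetilde{T}\tilde{x}_n \longrightarrow u$; then setting $y_n := x_n/\|x_n\| \in S_X$ gives $Ty_n \longrightarrow u$ because $\|x_n\| \longrightarrow 1$.

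For the final equivalence under the hypothesis that $T(X)$ is closed, the direction (c)$\Rightarrow$(b) is Remark~\ref{basic_remark}.(a) applied to $\widetilde{T}$. For (b)$\Rightarrow$(c), the key observation is that $\widetilde{T}$ is injective by construction, and its range equals $T(X)$, which is closed by assumption. Hence $\widetilde{T}$ is a monomorphism from $X/\ker T$ to $Y$, and Lemma~\ref{lem_paya} upgrades the quasi norm attainment in (b) to genuine norm attainment in (c). There is really no hard step in this proof; the only place that requires a small amount of care is the renormalization in (b)$\Rightarrow$(a), and invoking Lemma~\ref{lem_paya} for the last equivalence is what makes the closedness of the range indispensable.
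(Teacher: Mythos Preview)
Your proof is correct and follows essentially the same route as the paper's own argument: push through $q$ for (a)$\Rightarrow$(b), lift representatives with controlled norm for (b)$\Rightarrow$(a), and invoke Lemma~\ref{lem_paya} on the monomorphism $\widetilde{T}$ for the final equivalence. The only cosmetic difference is that in (b)$\Rightarrow$(a) the paper scales $\tilde{x}_n$ into the open unit ball and uses $q(\intt B_X)=\intt B_{X/\ker T}$, whereas you pick representatives with $\|x_n\|\leq 1+1/n$ and normalize afterward; both devices address the same subtlety in equivalent ways.
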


\begin{proof}
(a)$\Rightarrow$(b). Suppose that $T \in \QNA(X,Y)$. Let $(x_n) \subseteq S_X$ and $u \in \|T\| S_Y$ be such that $T x_n \longrightarrow u$. Note that $(q(x_n)) \subseteq B_{X/\ker T}$ and $\widetilde{T} (q(x_n)) \longrightarrow u$. As $\|\widetilde{T}\|=\|T\|$, this implies that $\widetilde{T} \in \QNA (X/\ker T, Y)$. Conversely, suppose that $\widetilde{T} \in \QNA (X/\ker T, Y)$. Let $(\tilde{x}_n) \subseteq S_{X/\ker T}$ and $u \in \|\widetilde{T}\| S_Y$ be such that $\widetilde{T} \tilde{x}_n \longrightarrow u$. As $q(\text{Int} (B_X) ) = \text{Int} (B_{X/\ker T})$, there exists a sequence $(x_n) \subseteq \text{Int} (B_X)$ so that $q(x_n) = \frac{n}{n+1} \tilde{x}_n$ for each $n \in \N$. Observe that
\[
T x_n = \widetilde{T} (q(x_n)) = \left(\tfrac{n}{n+1}\right) \widetilde{T} \tilde{x}_n \longrightarrow u,
\]
hence $T \in \QNA(X,Y)$.

In order to prove the last equivalence, just observe that if $T(X)$ is closed, then $\widetilde{T}$ is a monomorphism, so Lemma~\ref{lem_paya} gives the equivalence between $\widetilde{T} \in \QNA(X/\ker T, T(X))$ and $\widetilde{T} \in \NA(X/\ker T, T(X))$.
\end{proof}

Our next aim is to present conditions allowing us to get that the norm attainment and the quasi norm attainment are equivalent. In the first result, we start with an operator $T\in \Lin(X,Y)$ with closed range such that $T\in \QNA(X,Y)$ and get that $T\in \NA(X,Y)$ from the fact that $\widetilde{T}\in \NA(X/ \ker T,Y)$ using proximinality of the kernel of the operator. Recall that a (closed) subspace $Y$ of a Banach space $X$ is said to be \emph{proximinal} if for every $x \in X$ the set
\[
P_Y (x) := \{ y \in Y \colon \|x-y\| = \dist (x, Y) \}
\]
is nonempty (we refer to \cite{Singer} for background). An easy observation is that a hyperplane is proximinal if and only if it is the kernel of a norm attaining operator (see \cite[Theorem~2.1]{Singer}). It is well known (and easy to prove) that $Y$ is proximinal if and only if $q(B_X)=B_{X/Y}$ where $q$ is the quotient map from $X$ onto $X/Y$ \cite[Theorem~2.2]{Singer}. Another basic result on proximinality is that reflexive subspaces are proximinal in every superspace \cite[Corollary~2.1]{Singer}.

We are now able to present our first result, which is an extension of Lemma~\ref{lem_paya}.

\begin{prop}\label{prop_proximinal}
	Let $X$ and $Y$ be Banach spaces. If $T \in \QNA(X,Y)$ satisfies that $T(X)$ is closed and $\ker T$ is proximinal, then $T \in \NA(X,Y)$.
\end{prop}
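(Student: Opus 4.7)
The plan is to factor through the quotient $X/\ker T$ using Lemma~\ref{lem_quotient}, and then lift the norm-attaining point back to $X$ by exploiting proximinality of $\ker T$.

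First, I would apply Lemma~\ref{lem_quotient}. Since $T \in \QNA(X,Y)$ and $T(X)$ is assumed closed, the equivalence of (a), (b), and (c) in that lemma yields that the induced injective operator $\widetilde{T}\colon X/\ker T \longrightarrow Y$ actually attains its norm in the classical sense. Hence there exists $\tilde{x}_0 \in S_{X/\ker T}$ with $\|\widetilde{T}\tilde{x}_0\| = \|\widetilde{T}\| = \|T\|$.

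Next I would use proximinality of $\ker T$. By the characterization of proximinality recalled just before the proposition, proximinality of $\ker T$ is equivalent to the quotient map $q\colon X \longrightarrow X/\ker T$ sending $B_X$ onto $B_{X/\ker T}$. Therefore I can choose $x_0 \in B_X$ with $q(x_0)=\tilde{x}_0$ and $\|x_0\| = \|\tilde{x}_0\|=1$. The commutativity of the factorization diagram then gives
\[
\|T x_0\|=\|\widetilde{T}(q(x_0))\|=\|\widetilde{T}\tilde{x}_0\|=\|T\|,
\]
so $T$ attains its norm at $x_0\in S_X$, i.e.\ $T\in \NA(X,Y)$.

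There is essentially no obstacle here: the two hypotheses were tailored precisely to fit into the two tools already developed. Closedness of $T(X)$ is exactly what turns quasi norm attainment of $\widetilde{T}$ into genuine norm attainment via Lemma~\ref{lem_paya} (as encoded in Lemma~\ref{lem_quotient}), while proximinality of $\ker T$ is exactly what allows the norm-attaining point in $X/\ker T$ to be pulled back to a unit vector in $X$. The only conceptual point worth highlighting in the write-up is that without proximinality one could only lift $\tilde{x}_0$ to a sequence in $B_X$ whose norms tend to $1$, recovering $T\in\QNA(X,Y)$ but not $T\in\NA(X,Y)$; proximinality is precisely what closes this gap.
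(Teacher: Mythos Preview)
Your proof is correct and follows essentially the same approach as the paper: apply Lemma~\ref{lem_quotient} (using closedness of $T(X)$) to get that $\widetilde{T}$ attains its norm, then use proximinality of $\ker T$ to lift the norm-attaining point from $B_{X/\ker T}$ back to $B_X$. Your added remark about why proximinality is precisely the hypothesis needed to close the gap is a nice touch not present in the paper's terser argument.
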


\begin{proof}
	Since $T(X)$ is closed, $\widetilde{T} \in \NA(X/\ker T, Y)$ by Lemma~\ref{lem_quotient}. Let $\tilde{x} \in B_{X/\ker T}$ be a point so that $\| \widetilde{T} \tilde{x} \| = \| \widetilde{T}\|$. Now, by proximinality, there exists $x \in B_X$ such that $q(x)=\tilde{x}$. As $Tx=\widetilde{T}(q(x))=\widetilde{T}(\tilde{x})$, we get that $\| T x \| = \| \widetilde{T} \tilde{x} \| = \| \widetilde{T} \| = \|T\|$.
\end{proof}

We present some consequences of the above result. The first one follows from the fact that reflexive subspaces are proximinal in every superspace \cite[Corollary~2.1]{Singer}.

\begin{corollary}\label{corollary:kerTreflexive}
Let $X$, $Y$ be Banach spaces and let $T\in \Lin(X,Y)$. If $T\in \QNA(X,Y)$, $T(X)$ is closed and $\ker T$ is reflexive, then $T\in \NA(X,Y)$.
\end{corollary}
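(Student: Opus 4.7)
The plan is to derive this as a direct corollary of Proposition~\ref{prop_proximinal}. The hypotheses of that proposition are that $T \in \QNA(X,Y)$, that $T(X)$ is closed, and that $\ker T$ is proximinal in $X$. The first two are already among the assumptions of the corollary, so the only thing to verify is that reflexivity of $\ker T$ implies its proximinality in $X$.

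For this implication, I would invoke the standard argument that any reflexive closed subspace $Z$ of a Banach space $X$ is proximinal. Given $x \in X$, set $d = \dist(x, Z)$ and pick a minimizing sequence $(z_n) \subseteq Z$ with $\|x - z_n\| \longrightarrow d$. The sequence $(z_n)$ is bounded in $Z$, so reflexivity of $Z$ gives a subsequence $(z_{n_k})$ converging weakly to some $z_0 \in Z$. Weak lower semicontinuity of the norm on $X$ yields
$\|x - z_0\| \leq \liminf_k \|x - z_{n_k}\| = d$,
and since $d$ is the infimum we in fact get $\|x - z_0\| = d$, i.e.\ $z_0 \in P_Z(x)$. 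This is precisely the fact already quoted from \cite[Corollary~2.1]{Singer}, so in a polished write-up one can simply cite it rather than reproduce the argument.

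Applying this with $Z = \ker T$ shows that $\ker T$ is proximinal in $X$, and then Proposition~\ref{prop_proximinal} immediately delivers $T \in \NA(X,Y)$. There is essentially no obstacle: the whole content of the corollary is the observation that reflexivity of the kernel is exactly what is needed to unlock the previously established proximinality-based criterion.
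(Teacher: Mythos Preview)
Your proposal is correct and matches the paper's approach exactly: the paper also derives this corollary directly from Proposition~\ref{prop_proximinal} by invoking the fact (cited from \cite[Corollary~2.1]{Singer}) that reflexive subspaces are proximinal in every superspace. Your inclusion of the standard weak-compactness argument is a welcome elaboration, but as you yourself note, in a final write-up the citation alone suffices.
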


We next give some examples showing that the conditions of Proposition~\ref{prop_proximinal} are all necessary. The first example shows that proximinality cannot be dropped.

\begin{example}
{\slshape Let $X$ be a non-reflexive Banach space. Then every $f\in \Lin(X,\KK)$ belongs to $\QNA(X,\KK)$ by Remark~\textup{\ref{basic_remark}.(b)} and, clearly, $f(X)$ is closed. Nevertheless, there are elements in $\Lin(X,\KK)\setminus \NA(X,\KK)$ by James' theorem (see \cite[Corollary~3.131]{FHHMZ} for instance).}
\end{example}

The second example, more interesting, shows that being injective is not enough for a quasi norm attaining operator to be norm attaining. It also shows that closedness of the range of the operator is needed in both Proposition~\ref{prop_proximinal} and Corollary~\ref{corollary:kerTreflexive}. We need to present the so-called Gowers' space $G$ introduced in \cite[proof of Theorem in Appendix]{G2} (see also \cite[Example 7]{AcoAguPay-Rocky} or \cite{AcoAguPay-Carolinae} for our notation, some properties, and the obvious extension to the complex case of Gowers' results). For a sequence $x$ of scalars and $n\in \N$, we write
$$
\Phi_n(x)=\frac{1}{H_n} \sup \left\{\sum\nolimits_{j\in J} |x(j)|\colon J\subset \N,\, |J|=n \right\}
$$
where $|J|$ is the cardinality of the set $J$ and $H_n=\sum\nolimits_{k=1}^n k^{-1}$. \emph{Gowers' space} $G$ is the Banach space of those sequences $x$ satisfying that $$\lim_{n\to \infty} \Phi_n(x)=0$$ equipped with the norm given by
$$
\|x\|=\sup\bigl\{\Phi_n(x)\colon n\in \N\bigr\} \qquad \text{for } x\in G.
$$

\begin{example}\label{example:Gowers-injective-QNA-no-NA}
{\slshape Let $G$ be Gowers' space and given $1<p<\infty$, let $T \colon G \longrightarrow \ell_p$ be the formal identity map. Then, $T\in \QNA(G,\ell_p)$, $\ker T=\{0\}$, but $T \notin \overline{\NA (G,\ell_p)} \cup \K(G,\ell_p)$.}
\end{example}

\begin{proof}
From the proof of \cite[Theorem in Appendix]{G2}, we can see that $\| T \| = \left(\sum\nolimits_{i=1}^{\infty} i^{-p} \right)^{1/p}$ and that $T \notin \overline{\NA (G,\ell_p)} \cup \K(G,\ell_p)$. Consider $x_n = \left(1,\tfrac{1}{2}, \ldots, \tfrac{1}{n}, 0,0,\ldots\right) \in S_G$ for $n \in \N$. From the facts that $\|x_n\|_G =1$, $\| Tx_n \|_p = \left(\sum_{i=1}^{n} i^{-p} \right)^{1/p} $ for every $n \in \N$, and that $(Tx_n)$ converges to $\left(1,\frac{1}{2}, \ldots, \frac{1}{n}, \frac{1}{n+1},\ldots\right) \in \|T\|S_{\ell_p}$, we deduce that $T \in \QNA(G, \ell_p)$.
\end{proof}

In the next proposition, we give a connection between the set $\QNA(X,Y)$ and the lineability of the set $\NA(X,\KK)$. It is an extension of \cite[Proposition~2.5]{KLMW} where the result was proved for compact operators and, actually, its proof is based on the proof of that result.

\begin{prop}\label{prop:QNA=NA_lineability_NA}
	Let $X$ and $Y$ be Banach spaces and let $T\in \Lin(X,Y)$. If $T\in \QNA(X,Y)$ and $(\ker T)^\bot \subseteq \NA(X,\KK)$, then $T \in \NA(X,Y)$.
\end{prop}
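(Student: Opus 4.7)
The plan is to push the argument of Proposition~\ref{prop:QNAimpliesNAadjoint} one step further: from $T \in \QNA(X,Y)$ that proposition extracts a functional $y^{*}\in S_{Y^{*}}$ at which the adjoint $T^{*}$ attains its norm, and the key additional observation is that the resulting functional $T^{*}y^{*}\in X^{*}$ automatically annihilates $\ker T$. The hypothesis is then tailor-made to force $T^{*}y^{*}$ itself to attain its norm on $B_X$, and any point of attainment will serve as a norm attaining vector for $T$.

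Concretely, first choose $(x_n)\subseteq S_X$ and $u\in Y$ with $\|u\|=\|T\|$ and $Tx_n\longrightarrow u$, and use the Hahn--Banach theorem to pick $y^{*}\in S_{Y^{*}}$ with $y^{*}(u)=\|u\|=\|T\|$. Then the chain
\[
\|T^{*}y^{*}\| \geq \bigl|[T^{*}y^{*}](x_n)\bigr| = |y^{*}(Tx_n)| \longrightarrow |y^{*}(u)| = \|T\|,
\]
combined with the trivial bound $\|T^{*}y^{*}\|\leq \|T^{*}\|=\|T\|$, yields $\|T^{*}y^{*}\|=\|T\|$.

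Second, for every $z\in\ker T$ one has $[T^{*}y^{*}](z)=y^{*}(Tz)=0$, so $T^{*}y^{*}\in (\ker T)^{\bot}$. By the hypothesis $(\ker T)^{\bot}\subseteq \NA(X,\KK)$, there exists $x_0\in S_X$ with
\[
|y^{*}(Tx_0)| = \bigl|[T^{*}y^{*}](x_0)\bigr| = \|T^{*}y^{*}\| = \|T\|.
\]
Hence $\|Tx_0\|\geq |y^{*}(Tx_0)|=\|T\|$, which together with the obvious bound $\|Tx_0\|\leq\|T\|$ forces $\|Tx_0\|=\|T\|$; that is, $T\in \NA(X,Y)$.

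I do not foresee any real obstacle: the whole argument is a two-line strengthening of Proposition~\ref{prop:QNAimpliesNAadjoint} once one notices that $T^{*}y^{*}$ sits in $(\ker T)^{\bot}$. A tempting but less efficient detour would be to pass to $X/\ker T$ (observing that the hypothesis together with James' theorem makes $X/\ker T$ reflexive), apply Proposition~\ref{prop:reflexive} to the induced operator $\widetilde T$, and then attempt to lift a norm attaining point of $\widetilde T$ back to $S_X$; that route runs into the genuinely subtle issue of whether $\ker T$ is proximinal in $X$, which the direct dual argument above bypasses entirely.
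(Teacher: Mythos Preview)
Your proof is correct and follows essentially the same approach as the paper's: both arguments use Proposition~\ref{prop:QNAimpliesNAadjoint} to produce $y^{*}\in S_{Y^{*}}$ with $\|T^{*}y^{*}\|=\|T\|$, observe that $T^{*}y^{*}\in(\ker T)^{\bot}$, and then invoke the hypothesis to find a norm-attaining point. The only cosmetic difference is that the paper routes through the quotient $X/\ker T$ and applies Proposition~\ref{prop:QNAimpliesNAadjoint} to $\widetilde{T}$ before pulling back via $q^{*}$, whereas your version works directly with $T$ and $T^{*}$; your route is in fact slightly more economical.
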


\begin{proof}
Consider $\widetilde{T}\colon X/\ker T \longrightarrow Y$ as in Lemma~\ref{lem_quotient} and use this result to get that $\widetilde{T}\in \QNA(X/\ker T, Y)$. Then, by Proposition~\ref{prop:QNAimpliesNAadjoint}, we get that $\widetilde{T}^*\in \NA\bigl(Y^*,(X/\ker T)^*\bigr)$, so there is $y^*\in S_{Y^*}$ such that
$$
\|\widetilde{T}^*(y^*)\|=\|\widetilde{T}\|=\|T\|.
$$
Now, the functional $x^*=T^*(y^*)=\bigl[q^*\widetilde{T}^*\bigr](y^*)\in X^*$ vanishes on $\ker T$, so it belongs to $(\ker T)^\bot\subset \NA(X,\KK)$. This implies that there is $x\in S_X$ such that
$$
|x^*(x)|=\|x^*\|=\bigl\|\bigl[q^*\widetilde{T}^*\bigr](y^*)\bigr\| =\bigl\|q^*(\widetilde{T}^*y^*)\bigr\| = \bigl\|\widetilde{T}^*(y^*)\bigr\|=\|T\|,
$$
where we have used the immediate fact that $q^*$ is an isometric embedding as $q$ is a quotient map. Therefore, $\|T\|=|[T^*y^*](x)|=|y^*(Tx)|$ and so $\|Tx\|=\|T\|$, as desired.
\end{proof}

Observe that for a reflexive space $X$, Proposition~\ref{prop:QNA=NA_lineability_NA} reproves the result in Proposition~\ref{prop:reflexive} that $\QNA(X,Y)=\NA(X,Y)$ for every Banach space $Y$.

\section{Stabilities on quasi norm attaining operators}\label{section:stability}
The aim of this section is to present some results which allow to transfer the denseness of the set of quasi norm attaining operators from some pairs to other pairs. The first result is that the denseness is preserved by some kinds of absolute summands of the domain space and to every kind of absolute summands of the range space. Recall that an \emph{absolute sum} of Banach spaces $X$ and $Y$ is the product space $X \times Y$ endowed with the product norm $\|(x,y)\|_a:=|(\|x\|,\|y\|)|_a$ of an absolute norm, where $|\cdot|_a$ is an \emph{absolute norm} (i.e.\ a norm in $\R^2$ satisfying that $|(1,0)|_a=|(0,1)|_a=1$, $|(x,y)|_a=|(|x|,|y|)|_a$ for all $x,y\in \R$). A closed subspace $X_1$ of a Banach space $X$ is said to be an \emph{absolute summand} if $X=X_1\oplus_a X_2$ for some absolute sum $\oplus_a$ and some closed subspace $X_2$ of $X$. An absolute norm $|\cdot|_a$ is said to be of \emph{type} $1$ if $(1,0)$ is a vertex of $B_{(\R^2,|\cdot|_a)}$, that is, if the set $\{ (1,t) \in \R^2 \colon |(1,t)|_a=1 \}$ separates the points of $(\R^2,|\cdot|_a)$. We refer to \cite{CDJM} for the use of absolute sums related to norm attaining and to the reference given there for general background on absolute sums. Classical examples of absolute sums are the $\ell_p$-sums for $1\leq p\leq \infty$. In the case of $p=1$, $\oplus_1$ summands are usually known as $L$-summands and it is clear that they are of type $1$.

\begin{prop}\label{prop:main_stability_result}
Let $X$ and $Y$ be Banach spaces such that $\QNA(X,Y)$ is dense in $\Lin(X,Y)$.
\begin{enumerate}
\item[\textup{(a)}]
If $X_1$ is an absolute summand of $X$ of type $1$, then $\QNA(X_1,Y)$ is dense in $\Lin(X_1,Y)$.
\item[\textup{(b)}]
In particular, if $X_1$ is an $L$-summand of $X$, then $\QNA(X_1,Y)$ is dense in $\Lin(X_1,Y)$.
\item[\textup{(c)}]
If $Y_1$ is an absolute summand of $Y$, then $\QNA(X,Y_1)$ is dense in $\Lin(X,Y_1)$.
\end{enumerate}
\end{prop}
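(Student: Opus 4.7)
My plan for all three parts is the uniform \emph{lift--approximate--restrict} template, exploiting the fact that in any absolute sum $Z_1\oplus_a Z_2$ the canonical inclusions are isometric and the canonical projections are norm-one (both following from monotonicity of any absolute norm together with $|(1,0)|_a=1$). For part (c), given $T_1\in\Lin(X,Y_1)$, I would lift to $T:=i_1\circ T_1\in\Lin(X,Y)$ via the isometric embedding $i_1\colon Y_1\hookrightarrow Y$ (whence $\|T\|=\|T_1\|$), invoke the hypothesis to find some $S\in\QNA(X,Y)$ with $\|S-T\|<\eps$, and set $S_1:=\pi_1\circ S$. Monotonicity of $|\cdot|_a$ gives $\|S_1-T_1\|<\eps$ together with the crucial bound $\|\pi_2\circ S\|<\eps$. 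For parts (a) and (b) I reverse the roles of domain and range: lift $T_1\in\Lin(X_1,Y)$ via the norm-one projection $P_1\colon X\to X_1$ to $T:=T_1\circ P_1\in\Lin(X,Y)$, approximate by $S\in\QNA(X,Y)$, and restrict to get $S_1:=S|_{X_1}$, with the companion bound $\|S|_{X_2}\|<\eps$.

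The heart of the argument is verifying that the restricted operator $S_1$ is itself in $\QNA$. For parts (a) and (b) I would exploit the type-$1$ hypothesis at the \emph{dual} level: the vertex at $(1,0)$ of $B_{(\R^2,|\cdot|_a)}$ is equivalent to the assertion $|(\alpha,\beta)|_{a^*}=\alpha$ whenever $\beta\leq t_0\alpha$ for some fixed $t_0>0$. By Proposition~\ref{prop:QNAimpliesNAadjoint} one finds $y^*\in S_{Y^*}$ with $\|S^*y^*\|=\|S\|$ aligned with the QNA limit $u$; then $\|(S|_{X_2})^*y^*\|\leq \|S|_{X_2}\|<\eps$ combined with the type-$1$ dual identity forces $\|(S|_{X_1})^*y^*\|=\|S\|=\|S_1\|$. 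Pairing this with the QNA witness $(x_n^1,x_n^2)\subseteq B_X$ for $S$ via the identity $y^*(Sx_n)=[(S|_{X_1})^*y^*](x_n^1)+[(S|_{X_2})^*y^*](x_n^2)$ and the domain-side constraint from $\|\cdot\|_a$ (in the $L$-summand case of (b), simply $\|x_n^1\|+\|x_n^2\|\leq 1$) forces $\|x_n^2\|\to 0$ and $\|x_n^1\|\to 1$; the renormalized sequence $w_n:=x_n^1/\|x_n^1\|\in S_{X_1}$ then witnesses $S_1 w_n\to u$ with $\|u\|=\|S_1\|$.

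For part (c) the same projection gives $S_1 x_n\to u_1$ with $\|u_1\|\leq\|S_1\|$, but the monotonicity estimate combined with $\|u_2\|\leq\|\pi_2\circ S\|<\eps$ only squeezes $\|u_1\|$ into an $\eps$-dependent neighbourhood of $\|S_1\|$ rather than pinning it exactly. To land exactly in $\QNA(X,Y_1)$ I would close the residual gap by an $O(\eps)$-sized rank-one correction of $S_1$ aligned with the limit direction $u_1$, using a supporting functional $y^*\in S_{Y_1^*}$ with $y^*(u_1)=\|u_1\|$ and with size controlled by the geometry of $|\cdot|_a$ near $(1,0)$ on the dual side.

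The principal technical obstacle is precisely this rank-one correction step in part (c): one must simultaneously arrange that the corrected operator has norm equal to $\|u_1\|$ exactly and that it remains within an enlarged $\eps$-tolerance of $T_1$, which requires careful geometric analysis of the absolute norm at the vertex; the type-$1$ hypothesis in parts (a) and (b) bypasses this difficulty by producing exact equalities on the dual directly, while the remaining bookkeeping is a routine exploitation of the absolute summand structure.
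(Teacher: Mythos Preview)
Your overall lift--approximate--restrict template matches the paper exactly, and for parts (a) and (b) your argument is essentially sound. Your route is a genuine variant: the paper works entirely on the primal side, using the type-$1$ consequence $\|x_1\|+K\|x_2\|\leq \|(x_1,x_2)\|_a$ (this is \cite[Lemma~1.4]{CDJM}) together with the direct estimate $\|\widetilde{S}(x_n^{(1)},x_n^{(2)})\|\leq \|\widetilde{S}\|\,\|x_n^{(1)}\|+\eps\,\|x_n^{(2)}\|$ to force $\|x_n^{(2)}\|\to 0$ along a subsequence. You instead pass through Proposition~\ref{prop:QNAimpliesNAadjoint} and the dual flat-face characterization of type~$1$; this is correct and gives the pleasant intermediate conclusion $\|S_1\|=\|S\|$, but in the end you still need the primal constraint $\|x_n^{(1)}\|+K\|x_n^{(2)}\|\leq 1$ (which you only write out for the $L$-summand case) to drive $\|x_n^{(2)}\|\to 0$. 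So your dual detour is a legitimate alternative, but not a shortcut.

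For part (c) there is a real gap in what you propose. A rank-one correction of $S_1$ built from a functional $y^*\in S_{Y_1^*}$ supporting $u_1$ alone will in general \emph{not} land you in $\QNA(X,Y_1)$: the perturbed operator $S_1+\alpha\, y^*(S_1\,\cdot)\,u_1/\|u_1\|$ satisfies $\|S'x\|\leq (1+\alpha)\|S_1 x\|$ and $S'x_n\to (1+\alpha)u_1$, but since one only knows $\|u_1\|\leq\|S_1\|$ (with possible strict inequality for a general absolute norm), the limit $(1+\alpha)\|u_1\|$ may remain strictly below $\|S'\|$ no matter how $\alpha$ is chosen. The paper's fix is to take the supporting functional on the \emph{full} space $Y$: pick $y^*=(y_1^*,y_2^*)\in S_{Y^*}$ with $y^*(u)=\|u\|_a$, and define
\[
S(x):=\|y_1^*\|\,\widetilde{S}_1 x \;+\; y_2^*(\widetilde{S}_2 x)\,\frac{u_1}{\|u_1\|}.
\]
The point of mixing in the second component via $y_2^*\circ\widetilde{S}_2$ is that the dual-norm inequality $\|y_1^*\|\,\|\widetilde{S}_1 x\|+\|y_2^*\|\,\|\widetilde{S}_2 x\|\leq \|\widetilde{S}x\|_a$ then gives $\|Sx\|\leq\|\widetilde{S}x\|_a$ for \emph{every} $x$, so $\|S\|\leq\|\widetilde{S}\|$; meanwhile $Sx_n\to (\|u\|_a/\|u_1\|)\,u_1$, which has norm exactly $\|u\|_a=\|\widetilde{S}\|$, forcing $\|S\|=\|\widetilde{S}\|$ and $S\in\QNA(X,Y_1)$. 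The bound $\|S-T\|\leq 3\eps$ then follows from $1-\|y_1^*\|<\eps/\|u\|_a$ and $\|\widetilde{S}_2\|<\eps$. In short: the correction must reach back into $Y$ (using both components of the supporting functional and the discarded piece $\widetilde{S}_2$), not stay inside $Y_1$.
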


The proofs of these results are adaptation of the corresponding ones given in \cite{CDJM} for norm attaining operators.

\begin{proof}
(a) Let $T \in \Lin (X_1, Y)$ and $\eps >0$ be given. Write $X = X_1 \oplus_{a} X_2$ and define $\widetilde{T} \in \Lin (X, Y)$ as $\widetilde{T} (x_1, x_2) := T x_1$ for every $(x_1, x_2) \in X$. Then there exists $\widetilde{S} \in \QNA (X, Y)$ such that $\|\widetilde{S} \| = \| \widetilde{T} \|$ and $\| \widetilde{S} - \widetilde{T} \| < \eps$. Choose a sequence $(x_n) = (x_n^{(1)}, x_n^{(2)}) \subseteq S_{X}$ satisfying $\widetilde{S} x_n \longrightarrow u$ for some $u \in Y$ with $\| u \| = \|\widetilde{S}\|$.
Define $S \in \Lin (X_1, Y)$ as $S(x_1) := \widetilde{S} (x_1, 0)$ for every $x_1 \in X_1$, then $\|S \| \leq \| \widetilde{S} \|$ and
\[
\| Sx_1 - T x_1 \| = \| \widetilde{S} (x_1, 0) - \widetilde{T} (x_1, 0) \| \leq \| \widetilde{S} - \widetilde{T} \| < \eps
\]
for all $x_1 \in B_{X_1}$. Thus $\| S - T \| < \eps$. Moreover,
\[
\| \widetilde{S} (0, x_2) \| = \| \widetilde{S} (0, x_2) - \widetilde{T} (0, x_2) \| \leq \| \widetilde{S} - \widetilde{T} \| < \eps
\]
for all $x_2 \in B_{X_2}$. We claim that $S \in \QNA (X_1 , Y)$.
As $\oplus_a$ is of type $1$, there is $K > 0$ such that $\|x_1 \| + K \|x_2 \| \leq \| (x_1, x_2)\|_a $ for every $(x_1, x_2) \in X$ (see \cite[Lemma~1.4]{CDJM}). Passing to a subsequence, we may assume that $\| x_n^{(1)} \| \longrightarrow \lambda_1$ and $\| x_n^{(2)} \| \longrightarrow \lambda_2$. Then we have that $\lambda_1 + K \lambda_2 \leq 1$.
If $\lambda_2 > 0$, passing to a subsequence again, we may assume that $\| x_n^{(2)} \| > 0$ for each $n \in \N$. Now,
\begin{align*}
\| u \| = \lim_n \| \widetilde{S} (x_n^{(1)}, x_n^{(2)}) \| &\leq \lim_n \|\widetilde{S} (x_n^{(1)}, 0 ) \| + \|x_n^{(2)} \| \left\| \widetilde{S} \left(0, \frac{x_n^{(2)}}{\|x_n^{(2)}\|}\right) \right\| \\
&\leq \lim_n (\|u\| \| x_n^{(1)} \| + \| u \|\| x_n^{(2)} \| \,\eps ).
\end{align*}
If we choose $\eps > 0$ to be smaller than $K >0$, we have
\[
1 \leq \lambda_1 + \eps \lambda_2 < \lambda_1 + K \lambda_2 \leq 1,
\]
which is a contradiction. This implies that $\lambda_2 = 0$ and
\[
\| S x_n^{(1)} - u \| \leq \| \widetilde{S} (x_n^{(1)}, 0) - \widetilde{S} (x_n^{(1)}, x_n^{(2)} ) \| +  \| \widetilde{S} (x_n^{(1)}, x_n^{(2)} ) - u \| \leq \| \widetilde{S}\| \|x_n^{(2)} \| + \| \widetilde{S} x_n - u \| \longrightarrow 0.
\]
This shows that $S\in \QNA(X_1,Y)$, finishing the proof of (a).

(b) is a particular case of (a) as $L$-summands are of type $1$.

(c) Put $Y = Y_1 \oplus_a Y_2$, and let $\eps >0$ and $T \in \Lin (X, Y_1)$ be given. Define $\widetilde{T} \in \Lin (X,Y)$ by $\widetilde{T} (x) := (T x, 0 )$ for every $x \in X$. Then $\|\widetilde{T}\| = \|T\|$ and there exists $\widetilde{S} \in \QNA (X, Y)$ such that $\| \widetilde{S} \| = \| \widetilde{T}\|$ and $\| \widetilde{S} - \widetilde{T} \| < \eps$. If we write $\widetilde{S} = (\widetilde{S}_1, \widetilde{S}_2)$ where $\widetilde{S}_j \in \Lin (X, Y_j)$ for $j =1, 2$, then
\begin{align*}
\| (\widetilde{S}_1x - T x, \widetilde{S}_2 x ) \|_{\infty} \leq \|\widetilde{S} x - \widetilde{T} x \|_a \leq \|\widetilde{S}- \widetilde{T} \| < \eps
\end{align*}
for all $x \in B_X$. It follows that $\|\widetilde{S}_1 - T \| < \eps$ and $\| \widetilde{S}_2 \| < \eps$. Choose a sequence $(x_n) \subseteq S_X$ such that $$\widetilde{S} x_n \longrightarrow u = (u_1, u_2) \in Y \quad \text{ with } \quad \|u \|_a = \|\widetilde{S}\|.$$ This implies that $\widetilde{S}_1 x_n \longrightarrow u_1$ and $\widetilde{S}_2 x_n \longrightarrow u_2$. Notice from $\|u_2 \| < \eps$ that $\|u_1\| > \| T \| -\eps$.
Let $y^* =(y_1^*, y_2^*) \in Y^*$ such that $\|y^*\|_{a^*} = 1$ and $y^* (u) = y_1^* (u_1) + y_2^* (u_2) = \| u\|_a$. It is easy to deduce that $ y_1^* (u_1) = \| y_1^*\| \|u_1\|$ and $y_2^* (u_2) = \|y_2^*\| \|u_2\|$. Define $S \in \Lin (X, Y_1)$ by
\[
S(x) := \|y_1^* \| \widetilde{S}_1 x + y_2^* (\widetilde{S}_2 x ) \frac{u_1}{\|u_1\|} \qquad \text{for } x \in X.
\]
Then, we have that
\begin{align*}
\|Sx \| \leq \|y_1^* \| \| \widetilde{S}_1 x \| + \| y_2^* \| \| \widetilde{S}_2 x \| \leq \|\widetilde{S} x \|_a \| y^* \|_{a^*} = \|\widetilde{S} x\|;
\end{align*}
hence $\|S \| \leq \| \widetilde{S} \|$. Note that
\[
S x_n = \|y_1^* \| \widetilde{S}_1 x_n + y_2^* (\widetilde{S}_2 x_n ) \frac{u_1}{\|u_1\|} \longrightarrow \|y_1^* \| u_1 + y_2^* (u_2) \frac{u_1}{\|u_1\|} = \frac{\|u\|_a}{\|u_1\|} u_1.
\]
Thus, $S \in \QNA(X, Y_1)$. To see that $S$ is close enough to $T$, observe first that $\| y_1^* \| \|u\|_a \geq \| y_1^* \| \|u_1 \| > \| u \|_a - \eps$. Hence for every $x \in B_X$,
\begin{align*}
\| Sx - Tx \| &\leq \| \|y_1^* \| \widetilde{S}_1 x - Tx \| + \| y_2^* (\widetilde{S}_2 x ) \| \\
&< (1 - \|y_1^* \| ) \|\widetilde{S} \| + \| \widetilde{S}_1 - T \| + \eps \\
&\leq \frac{\eps}{\| u \|_a } \|\widetilde{S} \| + 2\eps = 3 \eps.
\end{align*}
So, $\| S - T \| \leq 3 \eps$.
\end{proof}

A similar result to the previous one is the following one which borrows ideas from \cite[Proposition 2.8]{ACKLM}.

\begin{prop}
Let $X$ and $Y$ be Banach spaces and $K$ be a compact Hausdorff space. If $\QNA(X,C(K,Y))$ is dense in $\Lin(X,C(K,Y))$, then $\QNA(X,Y)$ is dense in $\Lin(X,Y)$.
\end{prop}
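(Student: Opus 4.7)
The plan is to lift a given operator $T\in \Lin(X,Y)$ to $C(K,Y)$ via constant functions, apply the hypothesis there, and then pull back by evaluation at a maximizer to recover an approximating quasi norm attaining operator into $Y$.

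More precisely, fix $T\in \Lin(X,Y)$ and $\eps>0$. Define $\widetilde{T}\in \Lin(X,C(K,Y))$ by $[\widetilde{T}x](t):=Tx$ for every $t\in K$ and every $x\in X$; clearly $\|\widetilde{T}\|=\|T\|$. By hypothesis, there exists $\widetilde{S}\in \QNA(X,C(K,Y))$ with $\|\widetilde{S}-\widetilde{T}\|<\eps$. Let $(x_n)\subseteq S_X$ and $f\in C(K,Y)$ with $\|f\|=\|\widetilde{S}\|$ be such that $\widetilde{S}x_n\longrightarrow f$ in $C(K,Y)$. Since $K$ is compact and $t\mapsto \|f(t)\|$ is continuous, there exists $t_0\in K$ with $\|f(t_0)\|=\|f\|=\|\widetilde{S}\|$.

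Now define $S\in \Lin(X,Y)$ by $Sx:=[\widetilde{S}x](t_0)$; equivalently, $S=\delta_{t_0}\circ \widetilde{S}$, where $\delta_{t_0}\colon C(K,Y)\longrightarrow Y$ is the evaluation functional at $t_0$, which has norm one. Then $\|S\|\leq \|\widetilde{S}\|$, while
\[
Sx_n=[\widetilde{S}x_n](t_0)\longrightarrow f(t_0),
\]
and $\|f(t_0)\|=\|\widetilde{S}\|$, which forces $\|S\|=\|\widetilde{S}\|$ and shows that $S$ quasi attains its norm toward $f(t_0)$, that is, $S\in \QNA(X,Y)$. Finally, for every $x\in B_X$,
\[
\|Sx-Tx\|=\bigl\|[\widetilde{S}x](t_0)-[\widetilde{T}x](t_0)\bigr\|\leq \|\widetilde{S}x-\widetilde{T}x\|_{C(K,Y)}\leq \|\widetilde{S}-\widetilde{T}\|<\eps,
\]
so $\|S-T\|\leq \eps$.

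There is really no obstacle here beyond checking that the constant-function embedding and the evaluation map are both contractive (in fact, the first is isometric and the second has norm one), that compactness of $K$ guarantees the existence of a point $t_0$ where $\|f\|$ is attained, and that convergence in the sup norm on $C(K,Y)$ yields pointwise convergence at $t_0$. These together produce the required operator $S$.
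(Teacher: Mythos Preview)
Your proof is correct and follows essentially the same approach as the paper: lift $T$ to $C(K,Y)$ via constant functions, approximate by $\widetilde{S}\in\QNA(X,C(K,Y))$, pick $t_0\in K$ where the limit function $f$ attains its sup norm, and set $S=\delta_{t_0}\circ\widetilde{S}$. The only cosmetic difference is that the paper also normalizes so that $\|\widetilde{S}\|=\|\widetilde{T}\|$, whereas you instead deduce $\|S\|=\|\widetilde{S}\|$ from the convergence $Sx_n\to f(t_0)$; both are fine.
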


\begin{proof}
Let $\eps >0$ and $T \in \Lin (X, Y)$ be given. Define $\widetilde{T} \in \Lin (X , C(K, Y))$ as $(\widetilde{T} x ) (t) := Tx$ for every $x \in X$ and $t \in K$. It is clear that $\|\widetilde{T} \| = \|T \|$. Let $\widetilde{S} \in \QNA (X ,C(K, Y))$ be such that $\| \widetilde{S} \| = \| \widetilde{T} \|$ and $\| \widetilde{S} - \widetilde{T} \| < \eps$. Let $(x_n) \subseteq S_X$ be a sequence such that $\widetilde{S} x_n \longrightarrow f \in C(K, Y)$ with $\|\widetilde{S} \| = \| f \|$. Let $t_0 \in K$ so that $\| f (t_0) \| = \| f \|$, then $[\widetilde{S} x_n ](t_0) \longrightarrow f(t_0) \in \|f\| S_Y$. Define $S \in \Lin (X, Y)$ as $S(x) := [\widetilde{S} x ] (t_0)$ for every $x \in X$, then $\| S \| \leq \| \widetilde{S}\|$ and $S x_n = [\widetilde{S} x_n ] (t_0) \longrightarrow f(t_0)$. It follows that $S \in \QNA (X, Y)$. Note that
\begin{align*}
\| Sx - Tx \| = \bigl\| [\widetilde{S} x ] (t_0) - [\widetilde{T} x ] (t_0) \bigr\| \leq \| \widetilde{S} x - \widetilde{T} x \| < \eps
\end{align*}
for every $x \in B_X$; hence $\| S - T \| < \eps$.
\end{proof}

The third result of the section is that the denseness is preserved under $\ell_1$-sums of the domain space. Given a family $\{Z_i\colon i \in I\}$ of Banach spaces, we denote by $\left[\bigoplus_{i \in I} Z_i\right]_{\ell_1}$ the $\ell_1$-sum of the family.

\begin{cor}\label{coro:stability-ell1-ellinfty}
	Let $\{X_i\colon i \in I\}$ be a family of Banach spaces, let $X$ be the $\ell_1$-sum of $\{ X_i\}$, and $Y$ be a Banach space. Then, $\QNA(X,Y)$ is dense in $\Lin(X,Y)$ if and only if $\QNA(X_i,Y)$ is dense in $\Lin(X_i,Y)$ for every $i \in I$.
\end{cor}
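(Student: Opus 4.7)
The forward implication ($\Rightarrow$) is immediate from Proposition~\ref{prop:main_stability_result}(b), since each $X_i$ is an $L$-summand of $X=\left[\bigoplus_{j\in I} X_j\right]_{\ell_1}$ via the decomposition $X = X_i \oplus_1 \left[\bigoplus_{j\neq i} X_j\right]_{\ell_1}$.

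For the reverse implication ($\Leftarrow$), the plan is the following. Fix $T \in \Lin(X,Y)$ and $\eps \in (0,1)$; we may assume $\|T\|=1$. Writing $\iota_i \colon X_i \hookrightarrow X$ for the canonical isometric inclusion and $T_i := T\circ \iota_i \in \Lin(X_i,Y)$, a routine computation in the $\ell_1$-sum shows that $\|T\|=\sup_{i\in I}\|T_i\|$, and for any bounded assignment $i \mapsto R_i \in \Lin(X_i,Y)$ the formula $R(x) := \sum_i R_i(x_i)$ defines an operator in $\Lin(X,Y)$ with $\|R\|=\sup_i\|R_i\|$. The strategy is to perturb only the $i_0$-coordinate of $T$, for some well-chosen $i_0$, by a quasi norm attaining operator slightly \emph{larger in norm} than all other $\|T_i\|$, so that the norm of the new global operator is achieved entirely on $X_{i_0}$.

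Concretely, choose $i_0\in I$ with $\|T_{i_0}\|>1-\eps/8$. By hypothesis, there exists $S_0 \in \QNA(X_{i_0},Y)$ with $\|S_0-T_{i_0}\|<\eps/8$, so $\|S_0\| \in (1-\eps/4,\, 1+\eps/8]$. Scale by setting
\[
\widetilde{S}_0 := \frac{1+\eps/8}{\|S_0\|}\, S_0 \in \QNA(X_{i_0},Y),
\]
which has norm exactly $1+\eps/8$ and satisfies $\|\widetilde{S}_0-S_0\|=|1+\eps/8-\|S_0\||\leq 3\eps/8$, hence $\|\widetilde{S}_0-T_{i_0}\|\leq \eps/2$. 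Now define $S \in \Lin(X,Y)$ by
\[
S\bigl((x_i)_i\bigr) := \widetilde{S}_0(x_{i_0}) + \sum_{i\neq i_0} T_i(x_i).
\]
Its coordinate restrictions are $\widetilde{S}_0$ on $X_{i_0}$ and $T_i$ on each $X_i$ with $i\neq i_0$, so $\|S\|=\max\bigl(\|\widetilde{S}_0\|,\,\sup_{i\neq i_0}\|T_i\|\bigr)= 1+\eps/8 = \|\widetilde{S}_0\|$. Picking a sequence $(x_n) \subseteq S_{X_{i_0}}$ with $\widetilde{S}_0(x_n)\to u \in \|\widetilde{S}_0\|S_Y$ and viewing it as a sequence in $S_X$ via $\iota_{i_0}$ gives $S(x_n)=\widetilde{S}_0(x_n) \to u$ with $\|u\|=\|S\|$, so $S \in \QNA(X,Y)$. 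Finally,
\[
\|S-T\| = \sup_i \|(S-T)\circ\iota_i\| = \|\widetilde{S}_0-T_{i_0}\| \leq \eps/2 < \eps,
\]
completing the proof.

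The only nontrivial point is the scaling step: without inflating $\|\widetilde{S}_0\|$ strictly above $\sup_{i\neq i_0}\|T_i\|$, the norm of $S$ could be realized through the other coordinates (which we have not modified to be quasi norm attaining), and there would be no reason for $S$ itself to quasi attain its norm. The inflation factor must be large enough to dominate the other components yet small enough to keep $\|S-T\|<\eps$, which is the balance arranged by the choices above.
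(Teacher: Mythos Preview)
Your proof is correct and follows essentially the same approach as the paper's: the forward direction is identical, and for the reverse direction both arguments perturb $T$ only on a single coordinate $X_{i_0}$ chosen so that $\|T_{i_0}\|$ is close to $\|T\|$, replacing $T_{i_0}$ by a nearby $\QNA$ operator whose norm controls $\|S\|$. The only difference is cosmetic: the paper normalizes the perturbed component to have norm exactly $1=\|T\|$ (which already forces $\|S\|=1=\|u\|$, since $\sup_{j\neq i_0}\|T_j\|\leq 1$), whereas you inflate its norm to $1+\eps/8>1$ to achieve the same dominance; your final remark that inflation is essential slightly overstates the issue, as the paper's normalization-to-$1$ works just as well.
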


The proof of the ``if part'' is based on the corresponding one given in \cite{PS} for norm attaining operators.

\begin{proof}
As each $X_i$ is an $L$-summand of $X$, it follows from Proposition~\ref{prop:main_stability_result}.(b) that they inherit the property from $X$. Conversely, let $\eps >0$ and $T \in \Lin (X, Y)$ with $\|T \| =1$ be given. As $\|T \| = \sup\{\| T E_i \| : i \in I\}$ where $E_i$ is the natural isometric inclusion from $X_i$ into $X$, we may choose $i_0 \in I$ such that $\| T E_{i_0} \| > 1 - \eps$. Choose $S_{i_0} \in \QNA (X_{i_0}, Y)$ such that $\| S_{i_0}\| = 1$ and $\| S_{i_0} - T E_{i_0} \| < \eps$. Let $(x_n) \subseteq S_{X_{i_0}}$ be a sequence such that $S_{i_0} x_n \longrightarrow u$ for some $u \in Y$ with $\| u \| = 1$. Consider the operator $S \in \Lin (X, Y)$ so that $S E_{i_0} = S_{i_0}$ and $S E_j = T E_j$ for every $j \neq i_0$. Then $\| S \| \leq 1$ and $\| S - T \| = \| S_{i_0} - TE_{i_0} \| < \eps$.
Notice that $S (E_{i_0} x_n) = S_{i_0} x_n \longrightarrow u$, thus $S \in \QNA (X ,Y)$.
\end{proof}

In the aforementioned paper \cite{PS} it is shown an analogous result to the above one for the denseness of norm attaining operator for $c_0$- or $\ell_\infty$-sums of range spaces. We do now know whether such result has a version for quasi norm attainment. Actually, we do not know whether the fact that the denseness of quasi norm attaining operators for the pairs $(X,Y_1)$ and $(X,Y_2)$ implies the denseness of $\QNA(X,Y_1\oplus_\infty Y_2)$, see Problem~\ref{problem:ellinftysum} below.

\section{Remarks and open questions}\label{section:Remarks-open-questions}
Our final aim in the paper is to present some open problems and remarks on quasi norm attaining operators.

\subsection{Extensions of results on norm attaining operators} We would like to study whether some results valid for norm attaining operators remain true for quasi norm attaining operators. First, it would be of interest whether some more negative results on the denseness of norm attaining operators actually provide negative examples on the denseness of quasi norm attaining operators or not. For instance, the following questions can be of interest.

\begin{problem}
Is $\QNA(L_1[0,1],C[0,1])$ dense in $\Lin(L_1[0,1],C[0,1])$?
\end{problem}

Observe that it is shown in \cite{Schachermayer-classical} that $\NA(L_1[0,1],C[0,1])$ is not dense in $\Lin(L_1[0,1],C[0,1])$.

\begin{problem}
Let $Y$ be a strictly convex Banach space. Is it true that $Y$ has the RNP if (and only if) $\QNA(L_1[0,1],Y)$ is dense in $\Lin(L_1[0,1],Y)$?
\end{problem}

It is shown in \cite{Uhl} that the analogous result for norm attaining operators is true.

On the other hand, a couple of questions which have been stated along the paper can be also included in this subsection as they are related to results for the denseness of norm attaining operators.

\begin{problem}\label{problem:ellinftysum}
Let $X$, $Y_1$, $Y_2$ be Banach spaces such that $\QNA(X,Y_j)$ is dense in $\Lin(X,Y_j)$ for $j=1,2$. Is $\QNA(X,Y_1\oplus_\infty Y_2)$ dense in $\Lin(X,Y_1\oplus_\infty Y_2)$?
\end{problem}

The positive answer to this question for norm attaining operators was given in \cite{PS}.

Let us comment that a positive answer to Problem~\ref{problem:ellinftysum} would give an example of a Banach space $Z$ such that $\QNA(Z,Z)$ is dense in $\Lin(Z,Z)$ while $\NA(Z,Z)$ is not dense (indeed, $Z=G\oplus_\infty \ell_2$ where $G$ is Gowers' space describe in Example~\ref{example:Gowers-injective-QNA-no-NA} would work). We do not know whether such an example exists.

\begin{problem}\label{problem:ZZ-QNA-not-NA}
Does there exist a Banach space $Z$ such that $\QNA(Z,Z)$ is dense in $\Lin(Z,Z)$ while $\NA(Z,Z)$ is not dense?
\end{problem}

\vspace{1ex}

\subsection{Lindenstrauss properties} It would be of interest to study the version for quasi norm attainment of Lindenstrauss properties A and B. Let us say that a Banach space $X$ has \emph{property quasi A} if $\overline{\QNA(X,Z)}=\Lin(X,Z)$ for every Banach space $Z$; a Banach space $Y$ has \emph{property quasi B} if $\overline{\QNA(W,Y)}=\Lin(W,Y)$ for every Banach space $W$.

A list of some known results that we may write down on these properties, using both previously known results and results from this paper, is the following.

\begin{itemize}
\item[\textup{(a)}] $X$ has property quasi A in every equivalent norm if and only if $X$ has the RNP;
\item[\textup{(b)}] Separable Banach spaces (actually, spaces admitting a long biorthogonal system) can be equivalently renormed to have property A, and so property quasi A.
\item[\textup{(c)}] $Y$ has property quasi B in every equivalent norm if and only if $Y$ has the RNP;
\item[\textup{(d)}] Every Banach space can be equivalently renormed to have property B, and so property quasi B.
\end{itemize}

Assertion (a) and (c) follows from our Corollary~\ref{corollary:RNP-equivalence}; (b) and (d) appear in \cite{Godun-Troyanski} and in \cite{Partington}, respectively.

Therefore, the following question seems to be open.

\begin{problem}
Is it possible for every Banach space to be renormed equivalently to have property quasi A?
\end{problem}

The study of Lindenstrauss properties A and B provided many interesting results on the geometry of the involved Banach spaces, and the same can be true for the new analogous properties. For instance, the following result is an extension of a result by J.~Lindenstrauss \cite{Lindenstrauss} to the case of quasi norm attaining operators. Recall that a Banach space is said to be \emph{locally uniformly rotund} (\emph{LUR} in short) if for all $x,x_n\in B_X$ satisfying $\lim_n \|x_n+x\|=2$ we have $\lim_n \|x_n-x\|=0$. Separable Banach spaces and reflexive ones admit LUR norms. We refer the reader to \cite[Chapter~7]{FHHMZ} for background.

\begin{prop}
Let $X$ be a Banach space with property quasi A.
\begin{enumerate}
\item[\textup{(a)}] If $X$ is isomorphic to a strictly convex space, then $B_X$ is the closed convex hull of its exposed points.
\item[\textup{(b)}] If $X$ is isomorphic to a locally uniformly rotund space, then $B_X$ is the closed convex hull of its strongly exposed points.
\end{enumerate}
\end{prop}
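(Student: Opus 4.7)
My approach adapts Lindenstrauss's original argument, employing Lemma~\ref{lem_paya} to upgrade quasi norm attainment of monomorphisms to genuine norm attainment. By the Hahn--Banach theorem, it suffices to show that for every $x^*\in S_{X^*}$ and every $\eta>0$ there exists an exposed (resp.\ strongly exposed) point $e\in B_X$ with $\re x^*(e)>1-\eta$. I would produce such an $e$ as the point where a carefully constructed operator from $X$ into a strictly convex (resp.\ LUR) renorming attains its norm.

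Concretely, let $\vertiii{\cdot}$ be an equivalent strictly convex (resp.\ LUR) norm on $X$ with $\vertiii{\cdot}\leq\|\cdot\|$, set $Y:=(X,\vertiii{\cdot})$, and let $j\colon X\to Y$ denote the formal identity, so $\|j\|\leq 1$ and $j$ is an isomorphism. Fix $x^*\in S_{X^*}$; assuming $\dim X\geq 2$ (the one-dimensional case being trivial), choose $y_0\in S_Y$ with $x^*(j^{-1}(y_0))=0$, and for small $\eps,\delta>0$ set
\[
T_0(x):=\eps\, j(x)+x^*(x)\, y_0 \qquad (x\in X).
\]
A direct computation using $x^*(j^{-1}(y_0))=0$ shows $\ker T_0=\{0\}$, so $T_0$, being a compact rank-one perturbation of the isomorphism $\eps\, j$, is itself an isomorphism by Fredholm theory. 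By property quasi~A, pick $T\in\QNA(X,Y)$ with $\|T-T_0\|<\delta$; since isomorphisms form an open set in $\Lin(X,Y)$, for $\delta$ small enough $T$ is also an isomorphism, in particular a monomorphism, and Lemma~\ref{lem_paya} gives $T\in\NA(X,Y)$, yielding $x_0\in S_X$ with $\|Tx_0\|=\|T\|$.

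The point $x_0$ is (strongly) exposed in $B_X$. Choose $y^*\in S_{Y^*}$ with $y^*(Tx_0)=\|T\|$ and set $x^{**}:=T^*(y^*)\in X^*$; then $\|x^{**}\|=\|T\|$ and $x^{**}(x_0)=\|x^{**}\|$. For (a), if $z\in B_X$ satisfies $x^{**}(z)=\|x^{**}\|$, then $y^*(Tz)=\|T\|$ forces $\|Tz\|=\|T\|$ and $\|(Tx_0+Tz)/2\|=\|T\|$; strict convexity of $Y$ yields $Tx_0=Tz$, so $z=x_0$ by injectivity of $T$. For (b), given $(z_n)\subseteq B_X$ with $x^{**}(z_n)\to\|x^{**}\|$, one derives $\|Tz_n\|\to\|T\|$ and $y^*\bigl(Tz_n/\|Tz_n\|\bigr)\to 1=y^*\bigl(Tx_0/\|T\|\bigr)$; the LUR property of $Y$ at $Tx_0/\|T\|$ then forces $Tz_n\to Tx_0$, and continuity of $T^{-1}$ on the closed range $T(X)$ gives $z_n\to x_0$.

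Finally, I would link $x_0$ back to the given functional. Picking $y^*\in S_{Y^*}$ with $y^*(y_0)=1$, the inequality $|y^*(T_0x)|\geq|x^*(x)|-\eps$ shows $\|T_0\|\geq 1-\eps$, hence $\|T_0x_0\|\geq\|Tx_0\|-\delta\geq 1-\eps-2\delta$, while the triangle inequality gives $\|T_0x_0\|\leq\eps+|x^*(x_0)|$, so $|x^*(x_0)|\geq 1-2\eps-2\delta$. Multiplying by $\theta\in\T$ with $\theta\, x^*(x_0)=|x^*(x_0)|$ yields a (strongly) exposed point $\theta x_0$, exposed by $\bar\theta\, x^{**}$, satisfying $\re x^*(\theta x_0)>1-\eta$ whenever $\eps,\delta<\eta/4$. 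The main technical obstacle is ensuring that the approximating operator $T$ remains a monomorphism so that Lemma~\ref{lem_paya} applies; this is resolved by the openness of the set of isomorphisms in $\Lin(X,Y)$ when $X\cong Y$, as already used in the proof of Lemma~\ref{lemma:paya-denseness}.
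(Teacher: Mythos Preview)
Your proposal is correct and follows essentially the same approach as the paper: both adapt Lindenstrauss's classical argument and hinge on the observation that quasi norm attaining monomorphisms are actually norm attaining (Lemma~\ref{lem_paya}). The only cosmetic difference is that the paper argues by contrapositive, citing Lindenstrauss's construction as a black box and then invoking Lemma~\ref{lemma:paya-denseness}, whereas you unroll the direct construction explicitly.
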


The proofs of (a) and (b) are very similar and are based on the corresponding proofs given in \cite[Theorem~2]{Lindenstrauss}, so we only leave here the idea of the proof of (b). Indeed, it is shown in the proof of \cite[Theorem~2]{Lindenstrauss} that for a Banach space $X$ which is isomorphic to a LUR space, if $B_X$ is not the closed convex hull of its strongly exposed points, then there exist a Banach space $Y$ and a monomorphism $T\colon X \longrightarrow Y$ such that $T \notin \overline{\NA (X,Y)}$. Combining this result with Lemma~\ref{lemma:paya-denseness}, we have $T \notin \overline{\QNA(X,Y)}$ and so $X$ fails property quasi A.

It would be interest to find other necessary conditions for properties quasi A and quasi B. For instance, there is a necessary condition for Lindenstrauss property B given in \cite[Theorem~3]{Lindenstrauss} in terms of smooth points which we do not know whether it is still valid for quasi norm attaining operators.

Let us also mention that while we know that Lindenstrauss property B is not the same that property quasi B (for instance, $Y=\ell_2$ has property quasi B as it is reflexive but not Lindenstrauss property B, see Example~\ref{examples:QNAdense-NAnot}.(a)), we do not know of any example of Banach space having property quasi A without having Lindenstrauss property A.

\begin{problem}
Does property quasi A imply Lindenstrauss property A?
\end{problem}

\vspace{1ex}

\subsection{Uniquely quasi norm attaining operators} We would like now to discuss the relation between uniquely quasi norm attaining operators and quasi norm attaining operators. It was already commented that both concepts are different: the identity in a Banach space of dimension greater than one is clearly quasi norm attaining but not uniquely. Aiming at the denseness, as a consequence of the results in Section~\ref{section:RNP-sufficient}, if $X$ or $Y$ has the RNP, then uniquely quasi norm attaining operators from $X$ to $Y$ are dense (see Corollary~\ref{coro:XRNPYRNP-QNAdense-strongversion}). So one may wonder whether the denseness of quasi norm attaining operators actually implies the stronger result of denseness of uniquely quasi norm attaining operators, but the following example shows that this is not the case, even if we have denseness of norm attaining operators.

\begin{example}
Let $\Id \in \Lin(c_0,c_0)$ be the identity map. Then, $\Id\in \NA(c_0,c_0)\subset \QNA(c_0,c_0)$, $\NA(c_0,c_0)$ is dense in $\Lin(c_0,c_0)$, but $\Id$ does not belongs to the closure of the set of uniquely quasi norm attaining operators.
\end{example}

\begin{proof}
It is clear that $\Id\in \NA(c_0,c_0)$ and the denseness of $\NA(c_0,c_0)$ follows from the fact that $c_0$ has property $\beta$ and we may use Examples~\ref{examples:NA-dense}. So it suffices to prove that $\Id$ cannot be approximated by uniquely quasi norm attaining operators. Indeed, suppose that there exists $T \in \Lin(c_0,c_0)$ which uniquely quasi norm attains its norm such that $\|T-\Id\|<\frac{1}{4}$. Consider $y_0 \in \|T\| S_{c_0}$ such that $T$ uniquely quasi attain its norm towards $y_0$ and take a sequence $(x_n) \subset S_{c_0}$ satisfying that
$$
Tx_n \longrightarrow y_0 \quad \text{and} \quad \|x_n-y_0\|<\frac{1}{2}.
$$
Let $m_0 \in \N$ be such that $|y_0(m_0)| < \frac{1}{4}$, and consider the sequence $\bigl( x_n + \frac{1}{4} \lambda_n e_{m_0} \bigr) \subset S_{c_0}$, where $\lambda_n \in \{-1,+1\}$ is chosen so that
$$
\left\| T \left( x_n + \frac{1}{4} \lambda_n e_{m_0} \right) \right\| \geq \|Tx_n\|
$$
for each $n \in \N$. This is possible by an easy convexity argument: if $$\left\| T \left( x_n + \frac{1}{4} e_{m_0} \right) \right\|<\|Tx_n\| \quad \text{and} \quad \left\| T \left( x_n - \frac{1}{4} e_{m_0} \right) \right\|<\|Tx_n\|$$ for some $n\in \N$, then
$$
2\|Tx_n\|\leq \left\| T \left( x_n + \frac{1}{4} e_{m_0} \right) \right\|\, + \, \left\| T \left( x_n - \frac{1}{4} e_{m_0} \right) \right\|<\|Tx_n\| + \|Tx_n\|,
$$
a contradiction. Now, we may assume by taking subsequence that $\lambda_n=\lambda_0$ for all $n \in \N$. Since $\|Tx_n\| \longrightarrow \|T\|$, we have that
$$
\left\| T\left( x_n + \frac{1}{4} \lambda_0 e_{m_0} \right) \right\| \longrightarrow \|T\|.
$$
Thus, as $T$ uniquely quasi attains its norm, there exist a subsequence $(x_{\sigma(n)})$ of $(x_n)$ and a scalar $\theta_0 \in \T$ such that
$$
T \left( x_{\sigma(n)} +\frac{1}{4} \lambda_0 e_{m_0} \right) \longrightarrow \theta_0 y_0,
$$
or equivalently,
$$
\frac{1}{4} \lambda_0 Te_{m_0} = (\theta_0-1) y_0.
$$
It follows that $|\theta_0-1| \leq \frac{1}{4}$ and hence that
\begin{equation}\label{eq:c0c0-UQNA-1}
\frac{1}{16} \geq \bigl|(\theta_0-1)y_0(m_0)\bigr| = \frac{1}{4}\bigl|[Te_{m_0}](m_0)\bigr|.
\end{equation}
On the other hand, writing as usual $e_{m_0}$ to denote the element of $c_0$ whose $m_0^{\,\text{th}}$ coordinate is $1$ and the rests are $0$, we have that
$$
\|T-\Id\|\geq \bigl\|[T-\Id](e_{m_0})\bigr\|=\|T(e_{m_0})-e_{m_0}\|\geq \bigl|[Te_{m_0}](m_0)-1\bigr|\geq 1 - \bigl|[Te_{m_0}](m_0)\bigr|,
$$
so
$$
\frac{1}{4}\bigl|[Te_{m_0}](m_0)\bigr|\geq \frac{1}{4} \bigl(1- \|T-\Id\| \bigr) > \frac{3}{16}.
$$
This contradicts \eqref{eq:c0c0-UQNA-1}, finishing the proof.
\end{proof}

The next result gives a positive condition to pass from the denseness of quasi norm attaining operators to uniquely quasi norm attaining operators: that the range space is locally uniformly rotund. Actually, in this case, we can get a result valid operator by operator.

\begin{prop}\label{prop:strictly-convex-strongly-expose}
Let $X$ and $Y$ be Banach spaces such that $Y$ is LUR. Then, every $T \in \QNA(X,Y)$ can be approximated by a uniquely quasi norm attaining operators.
\end{prop}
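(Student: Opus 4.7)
The plan is a direct rank-one perturbation in the spirit of Theorem~\ref{theo:RNP-QNA}, but without invoking Bourgain--Stegall: since $T$ is already in $\QNA(X,Y)$ we have the target vector $u$ in hand for free, and the LUR property of $Y$ will provide the extra rigidity needed to promote plain quasi norm attainment to the unique version. I will first dispose of the trivial case $T=0$ (the zero operator is vacuously uniquely quasi norm attaining toward $0$), so assume $\|T\|>0$. Fix $(x_n)\subset S_X$ with $Tx_n\to u$ and $\|u\|=\|T\|$, and pick $y^*\in S_{Y^*}$ with $y^*(u)=\|T\|$ (by Hahn--Banach).

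Given $\alpha\in(0,\eps/\|T\|)$, I would define the perturbation
\[
Sx := Tx + \alpha\, y^*(Tx)\,\frac{u}{\|u\|}\qquad (x\in X).
\]
Then $\|S-T\|\leq \alpha\|T\|<\eps$. The pointwise bound $\|Sx\|\leq \|Tx\|+\alpha|y^*(Tx)|\leq (1+\alpha)\|T\|$ together with
\[
Sx_n\longrightarrow u+\alpha\|T\|\frac{u}{\|u\|}=(1+\alpha)u=:v
\]
gives $\|S\|=\|v\|=(1+\alpha)\|T\|$, so $S\in \QNA(X,Y)$ with target $v$.

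For uniqueness, take any $(z_n)\subset B_X$ with $\|Sz_n\|\to \|S\|$. The inequality chain above must be asymptotically tight, so both $\|Tz_n\|\to \|T\|$ and $|y^*(Tz_n)|\to \|T\|$. Choose $\theta_n\in \T$ with $\theta_n y^*(Tz_n)=|y^*(Tz_n)|$, and set $w_n:=\theta_n Tz_n=T(\theta_n z_n)$. For $n$ large, $w_n\neq 0$; normalize $\tilde w_n:=w_n/\|w_n\|\in S_Y$ and $\tilde u:=u/\|T\|\in S_Y$. Then $y^*(\tilde w_n)\to 1=y^*(\tilde u)$, and the sandwich
\[
1\geq \left\|\tfrac{\tilde w_n+\tilde u}{2}\right\|\geq \re y^*\!\left(\tfrac{\tilde w_n+\tilde u}{2}\right)\longrightarrow 1,
\]
combined with LUR of $Y$ at $\tilde u$, forces $\tilde w_n\to \tilde u$ and hence $w_n\to u$. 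Therefore
\[
S(\theta_n z_n)=w_n+\alpha\, y^*(w_n)\,\frac{u}{\|u\|}\longrightarrow v.
\]
Extracting a subsequence along which $\theta_n\to \theta_0\in \T$ (by compactness of $\T$) yields $Sz_n\to \theta_0^{-1}v$ along that subsequence, which is exactly the defining condition for $S$ to uniquely quasi attain its norm toward $v$.

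The only nontrivial step is the LUR application; everything else is routine. The crucial observation is that the two-sided control $\|\tilde w_n\|=1$ and $y^*(\tilde w_n)\to 1$ is precisely what LUR converts into norm convergence $\tilde w_n\to \tilde u$. Once this rigidity is in place, the rank-one form of $S-T$ propagates it from $w_n=T(\theta_n z_n)$ back to $S(\theta_n z_n)$ automatically, and the compactness of $\T$ finishes the argument.
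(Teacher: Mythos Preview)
Your proof is correct and follows essentially the same route as the paper: the identical rank-one perturbation $S=T+\alpha\, y^*(T\,\cdot)\,u/\|u\|$, the same computation of $\|S\|=(1+\alpha)\|T\|$, and the same use of LUR at $u/\|u\|$ to force $T(\theta_n z_n)\to u$ and hence $S(\theta_n z_n)\to v$. The only cosmetic difference is in how the LUR hypothesis $\|\tilde w_n+\tilde u\|\to 2$ is verified: you rotate first by $\theta_n$ and then apply the functional $y^*$ directly to the normalized midpoint, whereas the paper first extracts a subsequence along which $y^*(Tz_{\sigma(n)})/\|T\|\to\lambda_0\in\T$ and then uses a triangle-inequality manipulation to reach the same conclusion. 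Your version is slightly more streamlined, but the two arguments are interchangeable.
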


\begin{proof}
Let $T \in \QNA(X,Y)$ and $\eps>0$ be given. We may assume here that $0<\eps\leq\|T\|$. Let $(x_n) \subset S_X$ and $y_0 \in \|T\| S_Y$ be satisfying that $Tx_n \longrightarrow y_0$. Choose $y_0^* \in S_{Y^*}$ so that $y_0^*(y_0)=\|T\|$, and define an operator $S \in \Lin(X,Y)$ by
	\[
	S(x):= Tx + \eps y_0^*(Tx) \frac{y_0}{\|T\|^2}.
	\]
It is easy to see that $S \in \QNA(X,Y)$ with $Sx_n \longrightarrow y_0\Bigl(1+ \frac{\eps}{\|T\|}\Bigr)$ and $\|S\|=\|T\|+\eps$.
Suppose now that there is a sequence $(z_n) \subset B_X$ such that $\|Sz_n\| \longrightarrow \|S\|$. That is,
	\[
	\left\|Tz_n + \eps y_0^*(Tz_n) \frac{y_0}{\|T\|^2} \right\| \longrightarrow \|T\|+\eps \qquad \text{as } n \to \infty.
	\]
We first note here that $\|Tz_n\| \longrightarrow \|T\|$ as $n$ tends to $\infty$, otherwise it implies that $\|Sz_n\|$ does not converge to $\|S\|$, a contradiction. Take a subsequence $(z_{\sigma(n)})$ of $(z_n)$ such that $y_0^*(Tz_{\sigma(n)})$ is convergent. The fact that $\|Sz_{\sigma(n)}\| \longrightarrow \|S\|$ gives us that $\lambda_0 := \lim_n \frac{y_0^*(Tz_{\sigma(n)})}{\|T\|} \in \T$. Hence we have that
	\begin{equation}\label{eq:LUR-convergent}
	\left\|Tz_{\sigma(n)} + \eps \lambda_0 \frac{y_0}{\|T\|} \right\| \longrightarrow \|T\|+\eps \qquad \text{as } n \to \infty.
	\end{equation}
Now, we claim that $\|Tz_{\sigma(n)} + \lambda_0y_0\| \longrightarrow 2\|T\|$. If the claim holds, then by the local uniform rotundity of $Y$, we can conclude that $Tz_{\sigma(n)} \longrightarrow \lambda_0 y_0$ and thus that $Sz_{\sigma(n)} \longrightarrow \lambda_0 y_0 \left( 1 + \frac{\eps}{\|T\|} \right)$, finishing the proof.

Since it is clear that $\|Tz_{\sigma(n)} + \lambda_0 y_0\| \leq 2\|T\|$, it suffices to show the opposite inequality. By the triangular inequality, we have that
	\[
	\|Tz_{\sigma(n)} + \lambda_0y_0\| \geq \left\| Tz_{\sigma(n)} \frac{\|T\|}{\eps} + \lambda_0 y_0 \right\| - \left(\frac{\|T\|}{\eps} - 1\right)\|T\|.
	\]
Observe that \eqref{eq:LUR-convergent} yields
	\[
	\left\| Tz_{\sigma(n)} \frac{\|T\|}{\eps} + \lambda_0 y_0 \right\| \longrightarrow (\|T\| +\eps) \frac{\|T\|}{\eps} \qquad \text{as } n \to \infty.
	\]
We then obtain that $\lim \|Tz_{\sigma(n)} + \lambda_0y_0\| \geq (\|T\|+\eps) \frac{\|T\|}{\eps} - \left(\frac{\|T\|}{\eps} - 1 \right)\|T\| = 2\|T\|$.
\end{proof}

It would be interesting to study more results analogous to the previous one.

\begin{problem}
Find other sufficient conditions allowing us to approximate quasi norm attaining operators by uniquely quasi norm attaining operators.
\end{problem}

\vspace{1ex}

\subsection{Quasi norm attaining endomorphisms} M.~I.~Ostroskii asked in \cite[p.~65]{Maslyu-Plichko} whether there exists an infinite dimensional Banach space such that $\NA(X,X)=\Lin(X,X)$, give some remarks on the possible example, and shows that the only possible candidates for $X$ are separable reflexive spaces without $1$-complemented infinite-dimensional subspaces having the approximation property. There is some more information in the web page\newline \href{https://mathoverflow.net/questions/232291/}{https://mathoverflow.net/questions/232291/}.

The version of the problem for quasi norm attaining operators could also be of interest.

\begin{problem}
Is there any infinite dimensional Banach space $X$ such that $\QNA(X,X)=\Lin(X,X)$?
\end{problem}

Some observations on the problem:
\begin{itemize}
\item If $X$ is reflexive, then $\QNA(X,X)=\NA(X,X)$ (by Proposition~\ref{prop:reflexive}) and so the problem is the same as Ostroskii's problem.
\item As $\K(X,X)\subset \QNA(X,X)$, one may think that the answer can be found among those Banach spaces with very few operators, that is, those $X$ such that $\Lin(X,X)=\{\lambda \Id + S\colon \lambda\in \KK,\, S\in \K(X,X)\}$ see \cite{ArgyrosHaydon-Acta,ArgyrosMotakis-TAMS} for a reference on this. But, again, in this case ``most'' quasi norm attaining operators are actually norm attaining, as the following easy result shows.
\end{itemize}

\begin{remark}
{\slshape Let $X$ be a Banach space, $\lambda\in \KK\setminus \{0\}$, $S\in \K(X,X)$, and write $T:=\lambda\Id + S$. If $T\in \QNA(X,X)$, then $T\in \NA(X,X)$.}\newline Indeed, take $(x_n)$ in $B_X$ such that $Tx_n\longrightarrow u\in \|T\|S_X$ and, by compactness, consider a subsequence $(x_{\sigma(n)})$ of $(x_n)$ such that $Sx_{\sigma(n)}\longrightarrow z\in X$. Now, $$x_{\sigma(n)}\longrightarrow \lambda^{-1}\bigl(u-z\bigr)=: x_0$$ and we have that $x_0\in B_X$ and $Tx_0=u$, so $\|Tx_0\|=\|T\|$ and $T\in \NA(X,X)$.
\end{remark}

\vspace*{0.5cm}

\noindent \textbf{Acknowledgment:\ } The authors thank Rafael Pay\'{a} for many conversations on the topic of this manuscript and, in particular, for providing the idea of Lemma~\ref{lem_paya}. They also thank D.~Werner
for kindly answering several inquires related to the topics of this manuscript and providing valuable references.

\end{document}